\theoremstyle:=definition,remark,plain\do{%
        \expandafter\g@addto@macro\csname th@\theoremstyle\endcsname{%
            \addtolength\thm@preskip\parskip
            }%
        }
\newtheorem{thm}{Theorem}[section]
\newtheorem{cor}[thm]{Corollary}
\newtheorem{lemma}[thm]{Lemma}
\newtheorem{definition}[thm]{Definition}
\newtheorem{claim}[thm]{Claim}
\title{}
\author{}
\begin{document}

\title{\vspace{-0.5in} Dirac's Theorem for hamiltonian Berge cycles in uniform hypergraphs}

\author{
{{Alexandr Kostochka}}\thanks{
\footnotesize {University of Illinois at Urbana--Champaign, Urbana, IL 61801
 and Sobolev Institute of Mathematics, Novosibirsk 630090, Russia. E-mail: \texttt {kostochk@math.uiuc.edu}.
 Research 
is supported in part by  NSF RTG Grant DMS-1937241 and grant 19-01-00682  of the Russian
Foundation for Basic Research.
}}
\and
{{Ruth Luo}}\thanks{
\footnotesize {University of California, San Diego, La Jolla, CA 92093. E-mail: \texttt {ruluo@ucsd.edu}.
 Research 
is supported in part by NSF grant DMS-1902808.
}}
\and{{Grace McCourt}}\thanks{University of Illinois at Urbana--Champaign, Urbana, IL 61801, USA. E-mail: {\tt mccourt4@illinois.edu}. Research 
is supported in part by NSF RTG grant DMS-1937241.}}

\date{ \today}
\maketitle

\vspace{-0.3in}

\begin{abstract}
The famous Dirac's Theorem gives an exact bound on the minimum degree of an $n$-vertex graph guaranteeing the existence of a hamiltonian cycle. We prove  exact bounds of similar type for hamiltonian Berge cycles in $r$-uniform, $n$-vertex hypergraphs for all $3\leq r< n$. The bounds are different for $r<n/2$ and $r\geq n/2$. We also give bounds  on the minimum degree guaranteeing existence of Berge cycles of length at least $k$ in such hypergraphs; the bounds are exact for all $k\geq n/2$.

\medskip\noindent
{\bf{Mathematics Subject Classification:}} 05D05, 05C65, 05C38, 05C35.\\
{\bf{Keywords:}} Berge cycles, extremal hypergraph theory, minimum degree.
\end{abstract}

\section{Introduction and Results}
\subsection{Terminology and known results}
 A hypergraph $H$ is a family of subsets of a ground set. We refer to these subsets as the {\em edges} of $H$ and the elements of the ground set as the {\em vertices} of $H$. We use $E(H)$ and $V(H)$ to denote the set of edges and the set of vertices of $H$ respectively. We say $H$ is {\em $r$-uniform}  ({\em an $r$-graph}, for short) if every edge of $H$ contains exactly $r$ vertices. A {\em graph} is a 2-graph.

The {\em degree} $d_H(v)$ of a vertex $v$ in a hypergraph $H$ is the number of edges containing $v$. The {\em minimum degree}, $\delta(H)$, is the minimum over degrees of all vertices of $H$. The {\em circumference}, $c(G)$, is the length of a longest cycle in $G$.

A {\em hamiltonian cycle} in a graph is a cycle that visits every vertex. 
 Sufficient conditions for existence of hamiltonian cycles in graphs have been well-studied. In particular, the first extremal result of this type was due to Dirac in 1952. 

\begin{thm}[Dirac~\cite{D}]\label{dirac}
Let $n \geq 3$. If $G$ is an $n$-vertex graph with  $\delta(G) \geq n/2$, then $G$ has a hamiltonian cycle.
\end{thm}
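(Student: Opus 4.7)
The plan is to prove the contrapositive in spirit via a standard longest-path argument. First I would dispose of connectivity: if $G$ were disconnected, its smallest component would have at most $n/2$ vertices, forcing any vertex in that component to have degree at most $n/2-1 < n/2$, contradicting the hypothesis. So $G$ is connected.

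Next I would fix a longest path $P = v_1 v_2 \ldots v_k$ in $G$. By maximality, every neighbor of $v_1$ and every neighbor of $v_k$ lies on $P$. The key step is a rotation/pigeonhole argument to produce a cycle through all of $V(P)$: let
\[
S = \{i \in \{1,\ldots,k-1\} : v_1 v_{i+1} \in E(G)\}, \qquad T = \{i \in \{1,\ldots,k-1\} : v_i v_k \in E(G)\}.
\]
Then $|S| = d(v_1) \geq n/2$ and $|T| = d(v_k) \geq n/2$, while $S, T \subseteq \{1,\ldots,k-1\}$ and $k \leq n$. Hence $|S|+|T| \geq n > k-1$, so $S \cap T \neq \emptyset$. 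For any $i \in S \cap T$, concatenating $v_1 v_2 \ldots v_i$, the edge $v_i v_k$, the reversed segment $v_k v_{k-1} \ldots v_{i+1}$, and the edge $v_{i+1} v_1$ yields a cycle $C$ on exactly $V(P)$.

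Finally I would upgrade $C$ to a hamiltonian cycle. If $k < n$, connectivity of $G$ guarantees some vertex $u \notin V(C)$ with a neighbor $v_j \in V(C)$; splicing $u$ onto $C$ at $v_j$ produces a path on $k+1$ vertices, contradicting the maximality of $P$. Therefore $k = n$ and $C$ is the desired hamiltonian cycle. The main technical point, and the step most easily mishandled, is the pigeonhole on $S$ and $T$: one must define $S$ using the successor index $i+1$ (not $i$) so that the edges $v_1 v_{i+1}$ and $v_i v_k$ can be combined with the two subpaths of $P$ to close up into a single cycle without repeating vertices.
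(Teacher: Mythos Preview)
Your proof is correct. Note, however, that the paper does not actually prove Dirac's theorem---it is merely cited as Theorem~\ref{dirac} from~\cite{D}. What the paper does do, in Section~3, is sketch Dirac's \emph{original} argument as motivation for its own strategy: Dirac first used a longest path to produce a cycle of length at least $1+n/2$, and then in a second step maximized a \emph{lollipop} (a cycle $C$ with a path $P$ attached at one vertex) to force $C$ to be hamiltonian.

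Your argument takes a different, now more standard, route: from a longest path you use the pigeonhole on the shifted neighbor sets $S$ and $T$ to close up directly into a cycle on all of $V(P)$, and then invoke connectivity once. This is cleaner and shorter for graphs. Dirac's two-step lollipop scheme, on the other hand, is what the paper actually needs: it is precisely this ``best pair $(C,P)$'' structure that the authors generalize (with several tie-breaking refinements) to handle Berge cycles in hypergraphs, where the direct rotation/pigeonhole on a single longest path is not strong enough.
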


Dirac also proved that $c(G)\geq \delta(G)+1$ for every graph $G$.
We consider similar conditions for
{\em Berge cycles} in hypergraphs.

\begin{definition}
A {\bf Berge cycle of} length $\ell$ in a hypergraph is a list of $\ell$ distinct vertices and $\ell$ distinct edges $v_1, e_1, v_2, \ldots,e_{\ell-1}, v_\ell, e_\ell, v_1$ such that $\{v_i, v_{i+1}\} \subseteq e_i$ for all $1\leq i \leq \ell$ (here we take indices modulo $\ell$).
Similarly, a {\bf Berge path} of length $\ell$ is a list of $\ell+1$ distinct vertices and $\ell$ distinct edges $v_1, e_1, v_2, \ldots, e_{\ell}, v_{\ell + 1}$ such that $\{v_i, v_{i+1}\} \subseteq e_i$ for all $1\leq i \leq \ell$. 
\end{definition}

Although the edges in a Berge cycle may contain other vertices, we say $V(C)=\{v_1, \ldots, v_\ell\}$, and $E(C) = \{e_1, \ldots, e_\ell\}$. 
Notation for  Berge paths is similar. We use $c(H)$ to denote the circumference of a hypergraph $H$, that is, the length of a longest Berge cycle in $H$.

An analogue of Dirac's Theorem for non-uniform hypergraphs was given in~\cite{FKLdirac}. 
For $r$-uniform hypergraphs, a well-known approximation of Dirac's bound on circumference and of Theorem~\ref{dirac}
was proved by Bermond,  Germa,  Heydemann and Sotteau~\cite{BGHS} more than 40 years ago:

\begin{thm}[Bermond, et al.~\cite{BGHS}]\label{bermond}Let $r \geq 3$ and $k \geq r+1$. If $H$ is an $r$-uniform hypergraph with $\delta(H) \geq {k-2 \choose r-1} + r - 1$, then $H$ contains a Berge cycle of length $k$ or longer. In particular, if  $\delta(H) \geq {n-2 \choose r-1} + r - 1$, then $H$ contains a hamiltonian Berge cycle.
\end{thm}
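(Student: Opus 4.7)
\emph{Proof plan.} I would argue by contradiction via a longest Berge path. Suppose $H$ has no Berge cycle of length $\ge k$, and let $P=u_0,e_1,u_1,\ldots,e_\ell,u_\ell$ be a longest Berge path in $H$. Two maximality observations anchor the argument. First, any edge $f\ni u_0$ with $f\notin E(P)$ must satisfy $f\subseteq V(P)$: otherwise a vertex $w\in f\setminus V(P)$ prepended to $P$ via $f$ yields a longer Berge path. Second, if $u_0\in e_j$ for some $j\ge 2$, then $u_0,e_1,u_1,\ldots,u_{j-1},e_j,u_0$ is a Berge cycle of length $j$, forcing $j\le k-1$. Both observations have symmetric analogues at the endpoint $u_\ell$.

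The main body of the proof establishes $|V(P)|\ge k$; once this is known, the closure argument is short. The first observation caps the non-path edges through $u_0$ at $\binom{\ell}{r-1}\le\binom{|V(P)|-1}{r-1}$, so the real work is to bound the \emph{path} edges through $u_0$. The key technical step is a P\'osa-style rotation: each $e_j\ni u_0$ with $j\ge 2$ produces a new longest Berge path ending at $u_{j-1}$ with the same vertex and edge sets as $P$, and iterating yields an enlarged set of admissible left-endpoints, every one of which inherits the extension-blocked conclusion of the first observation. Combining these constraints gives $d_H(u_0)\le\binom{|V(P)|-1}{r-1}+(r-1)$; the case $\ell\le k-3$ then contradicts the degree hypothesis outright since $\binom{\ell}{r-1}<\binom{k-2}{r-1}$, and the boundary case $\ell=k-2$ is closed by refining the rotation bookkeeping, or comparing the two endpoints $u_0$ and $u_\ell$, to shave an additional unit off the path-edge count.

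Once $|V(P)|\ge k$, closing the path into a long cycle is the usual argument. Call a non-path edge $f\ni u_0$ \emph{bad} if $f\subseteq\{u_0,u_1,\ldots,u_{k-2}\}$; bad edges number at most $\binom{k-2}{r-1}$. The degree hypothesis together with the bound on path edges at $u_0$ forces the existence of a non-bad, non-path edge $f\ni u_0$, which therefore contains a vertex $u_i$ with $i\ge k-1$; the Berge cycle $u_0,e_1,u_1,\ldots,u_i,f,u_0$ then has length $i+1\ge k$, contradicting our assumption. The principal obstacle throughout is the rotation-based bound on path edges through an endpoint: naively $u_0$ can belong to as many as $k-1$ edges of $P$, and only after carefully reducing this count to a quantity linear in $r$ does the degree hypothesis bite. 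The whole gap between the clean graph-case ($r=2$) proof, where every edge at an endpoint of a longest path lies in $V(P)$, and the $r$-uniform case lives in this step.
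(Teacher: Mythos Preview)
The paper does not prove Theorem~\ref{bermond}; it is quoted as a known result of Bermond, Germa, Heydemann and Sotteau and serves only as background for the paper's own sharper bounds (Theorems~\ref{main3} and~\ref{main4}). So there is no proof in the paper to compare your attempt against directly.

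That said, your longest-path framework is close in spirit to what the paper does for its \emph{improvement} of this result, Theorem~\ref{main3}(b). There the authors also take an edge-maximal counterexample and a longest path $P=v_1,e_1,\ldots,v_\ell$, and observe that every edge through $v_1$ lies either in $\{e_1,\ldots,e_{k-1}\}$ or inside $X=\{v_1,\ldots,v_{k-1}\}$. Rather than iterated P\'osa rotations, they split on whether some $e_i\ni v_1$ with $i\le k-2$ escapes $X$; if so, a single explicit rotation plus a forbidden-pair count bounds $d_H(v_1)$, and if not, the edges through $v_1$ are essentially all $r$-subsets of $X$, which forces a specific structure that is then broken by one more rotation.

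Your sketch has a real gap precisely at the step you yourself flag as the ``principal obstacle.'' You assert that rotation yields $d_H(u_0)\le\binom{|V(P)|-1}{r-1}+(r-1)$, i.e.\ that at most $r-1$ path edges contain $u_0$, but nothing in your outline delivers this. A rotation at $e_j\ni u_0$ produces a new endpoint $u_{j-1}$ and shows \emph{its} non-path edges lie in $V(P)$; it says nothing about how many of $e_1,\ldots,e_{k-1}$ contain the original $u_0$. A priori $u_0$ could lie in all of them, and your remark about ``shaving an additional unit'' in the case $\ell=k-2$ suggests you think the deficit is one, when in fact the gap between $k-1$ and $r-1$ can be arbitrarily large. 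What actually closes this (and what the paper does for its stronger theorem) is to absorb the path edges $e_j\ni v_1$ that are contained in $X$ into the $\binom{k-2}{r-1}$ count, and then argue separately that any $e_j\ni v_1$ escaping $X$ permits a reroute that either lengthens $P$ or closes a long cycle. Your plan needs that case split, not a generic appeal to rotation.
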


Recently, there was a series of improvements of the hamiltonian part of Theorem~\ref{bermond}.
First, Clemens, Ehrenm\"uller and Person~\cite{CEP} have proved an asymptotics for $n>2r-2$:

\begin{thm}[Clemens et al.~\cite{CEP}]\label{cepthm}
If $n > 2r-2$ and $\delta(H) \geq {\lfloor (n-1)/2 \rfloor \choose r-1} + n-1$, then $H$ has a hamiltonian Berge cycle.
\end{thm}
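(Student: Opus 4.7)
I would argue by contradiction: suppose $H$ is an $r$-graph on $n > 2r-2$ vertices with $\delta(H) \geq \binom{\lfloor (n-1)/2 \rfloor}{r-1} + n-1$ but no hamiltonian Berge cycle. The overall strategy is a P\'osa-style rotation argument on a longest Berge path, combined with the circumference bound already delivered by Theorem~\ref{bermond}.

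First, I would set $k := \lfloor (n-1)/2 \rfloor + 2$ and observe that $\binom{k-2}{r-1} + r-1 = \binom{\lfloor (n-1)/2\rfloor}{r-1} + r - 1 \leq \delta(H)$, so Theorem~\ref{bermond} yields a Berge cycle of length at least $\lfloor (n-1)/2 \rfloor + 2$. Take $C$ to be a longest Berge cycle in $H$ and set $W := V(H) \setminus V(C)$; the assumption that $C$ is not hamiltonian gives $W \neq \emptyset$. Pick $w \in W$ and form a longest Berge path $P = u_0 f_1 u_1 f_2 \cdots f_s u_s$ with $u_0 = w$; such a path exists with $s+1 \geq |V(C)| + 1$, because the very large degree at $w$ forces an edge at $w$ meeting $V(C)$, which we then splice with a long arc of $C$.

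By maximality of $P$, every edge of $H$ containing $u_0$ is a subset of $V(P)$ (otherwise $P$ extends), and similarly for $u_s$. The key move is rotation: for any edge $f \ni u_0$ distinct from $f_1$ with $f \cap \{u_j : j \geq 2\} \neq \emptyset$, picking the smallest such $j$ gives a Berge path
\[
u_{j-1}\, f_{j-1}\, u_{j-2}\, f_{j-2}\, \ldots\, u_1\, f_1\, u_0\, f\, u_j\, f_{j+1}\, u_{j+1}\, \ldots\, f_s\, u_s
\]
of length $s$ with new left endpoint $u_{j-1}$. Iterating yields a set $X$ of ``reachable'' left endpoints; an analogous process at $u_s$ produces a set $Y$ of reachable right endpoints. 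The counting I would do: if $|X| \leq \lfloor (n-1)/2 \rfloor$, then every edge at $u_0$ other than $f_1$ has its $r-1$ non-$u_0$ vertices confined to the set $\{u_i : u_{i-1} \in X\} \cup \{u_1\}$ (else a new endpoint would be reached), so the degree at $u_0$ is bounded by $\binom{|X|}{r-1} + (n-1)$, where the $\binom{|X|}{r-1}$ term counts admissible $(r-1)$-subsets and the additive $n-1$ absorbs $f_1$ together with edges meeting only $u_1$. This contradicts $\delta(H) \geq \binom{\lfloor (n-1)/2 \rfloor}{r-1} + n-1$, so $|X| \geq \lfloor (n-1)/2 \rfloor + 1$; symmetrically $|Y| \geq \lfloor (n-1)/2 \rfloor + 1$. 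Then $|X| + |Y| > n - 1 \geq s$, forcing indices $i$ with $u_i \in X$ and $u_{i+1} \in Y$ realized by compatible rotated Berge paths, and a further use of the minimum-degree condition supplies an unused edge of $H$ through $\{u_i, u_{i+1}\}$ (or through matched endpoints in the rotated versions). Closing the rotated path by this edge produces a Berge cycle on $V(P) \supsetneq V(C)$, contradicting the maximality of $C$.

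The main obstacle is the edge-disjointness bookkeeping in the hypergraph rotations: each rotated Berge path must use $s$ \emph{distinct} edges, and the closing edge must avoid all $s$ path-edges as well as each other's ``chord'' edges. Unlike for graphs, two edges of $E(P)$ or the rotation edge $f$ may share multiple path-vertices, so we must maintain an invariant reserving a pool of at least $s+1$ distinct edges throughout the rotation process. The secondary subtlety is the final-step counting: showing that the $\binom{\lfloor (n-1)/2\rfloor}{r-1} + n-1$ threshold is exactly the right balance between the ``$(r-1)$-subset'' term (controlling forbidden chord placements) and the linear slack (absorbing the reserved path-edges and the closing edge).
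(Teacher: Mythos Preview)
This theorem is not proved in the paper at all: it is quoted from Clemens, Ehrenm\"uller and Person~\cite{CEP} as prior work, and the present paper supersedes it with the sharper bound $\delta(H)\ge\binom{t}{r-1}+1$ of Theorem~\ref{main}. So there is no ``paper's own proof'' to compare against; the relevant comparison is with the proof of Theorem~\ref{main} (equivalently Theorem~\ref{main3}(c)), which proceeds not by P\'osa rotation but via the ``best pair'' $(C,P)$ machinery of Section~3: one maximises the cycle, then a disjoint path, then two auxiliary incidence counts, and analyses the endpoints of $P$ against $C$ through Claims~\ref{noconsecutive}--\ref{distance} and Lemmas~\ref{verc}--\ref{verc2}. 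That framework is designed precisely to avoid the edge-disjointness bookkeeping you flag as the main obstacle.

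As for your sketch on its own terms, there is a genuine gap in the counting. Your bound $d(u_0)\le\binom{|X|}{r-1}+(n-1)$, even if the $(n-1)$ term is justified, only yields $|X|\ge t:=\lfloor(n-1)/2\rfloor$ under the hypothesis $\delta(H)\ge\binom{t}{r-1}+n-1$, not $|X|\ge t+1$; equality $|X|=t$ is not excluded. Then $|X|+|Y|\ge 2t\le n-1$, which need not exceed $s$, so the crossing $u_i\in X$, $u_{i+1}\in Y$ is not forced. The closing step is also hand-waved: having $u_i\in X$ and $u_{i+1}\in Y$ gives two rotated Berge paths, one ending at $u_i$ and one beginning at $u_{i+1}$, but these paths share the same edge set $E(P)$, and you still need a single edge $g\notin E(P)$ containing both the left endpoint of one rotated path and the right endpoint of the other --- the phrase ``a further use of the minimum-degree condition supplies an unused edge'' does not explain how to find such a $g$, nor why the two rotations are compatible (they may have used incompatible rerouting edges). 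These are exactly the places where the hypergraph setting diverges from the graph case, and your outline does not resolve them.
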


Then Coulson and Perarnau~\cite{CP} proved the exact bound for  $n$ much larger than $r$:

\begin{thm}[Coulson and Perarnau~\cite{CP}]\label{cpthm} Let $H$ be an $r$-graph on $n$ vertices such that $r = o(\sqrt{n})$. If  $\delta(H) \geq {\lfloor (n-1)/2 \rfloor \choose r-1} + 1$, then $H$ contains a hamiltonian Berge cycle.
\end{thm}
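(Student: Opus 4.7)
The plan is to split the argument into a shadow-graph reduction plus a lifting step. Define the \emph{shadow graph} $G$ of $H$ by $V(G)=V(H)$ and $uv\in E(G)$ iff some edge of $H$ contains $\{u,v\}$. For any vertex $v$, every edge of $H$ through $v$ lies in $\{v\}\cup N_G(v)$, so $d_H(v)\le\binom{d_G(v)}{r-1}$. The minimum-degree hypothesis and strict monotonicity of $\binom{x}{r-1}$ in $x$ then force $d_G(v)\ge\lfloor(n-1)/2\rfloor+1\ge n/2$, hence $\delta(G)\ge n/2$. By Theorem~\ref{dirac}, $G$ has a Hamilton cycle $v_1v_2\cdots v_nv_1$.

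The second step is to lift this Hamilton cycle to a hamiltonian Berge cycle of $H$: for each $i\in[n]$ one must pick a distinct hyperedge $e_i\in E(H)$ with $\{v_i,v_{i+1}\}\subseteq e_i$. Setting $S_i=\{e\in E(H):\{v_i,v_{i+1}\}\subseteq e\}$, this is a system of distinct representatives problem for $(S_i)_{i=1}^n$, which by Hall's theorem amounts to $\bigl|\bigcup_{i\in I}S_i\bigr|\ge|I|$ for every $I\subseteq[n]$. The heuristic reason to expect this is that each hyperedge of $H$ covers only $\binom{r}{2}=o(n)$ pairs of $V(H)$, so the $n$ cycle-pairs cannot all be forced onto few hyperedges when $r=o(\sqrt n)$.

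The main obstacle is this lifting step. A Hamilton cycle of $G$ chosen arbitrarily need not lift: if a single hyperedge of $H$ is the only witness for several cycle-pairs, Hall's condition fails locally, and naive incidence counting gives only $|I|\le\binom{r}{2}\bigl|\bigcup_{i\in I}S_i\bigr|$, which is too weak. Two strategies are natural. One is adaptive: among all Hamilton cycles of $G$ (of which there are many since $\delta(G)\ge n/2$) maximize a diversity measure such as $\sum_i|S_i|$, and use a rotation/exchange argument to repair conflicts; this is delicate because local changes to the cycle must remain compatible with the Dirac-type neighborhoods. The more robust route is the \emph{absorption method}: first build a short absorbing Berge path $A$ with pairwise distinct hyperedges, so that for any sufficiently small $U\subseteq V(H)\setminus V(A)$ there is a Berge path on $V(A)\cup U$ with the same endpoints as $A$; then cover most of $V(H)\setminus V(A)$ by a long Berge path using hyperedges disjoint from those of $A$; finally, absorb the leftover vertices into $A$ and close to a hamiltonian Berge cycle. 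The condition $r=o(\sqrt n)$ is essential both in constructing $A$ and in maintaining edge-disjointness throughout, and is where the bulk of the technical work lies.
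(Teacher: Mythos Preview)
This statement is \emph{not} proved in the paper: Theorem~\ref{cpthm} is a cited result of Coulson and Perarnau, listed among prior work in the introduction. There is therefore no ``paper's own proof'' to compare against. What the present paper does instead is prove a strictly stronger statement, Theorem~\ref{main}, which gives the same bound $\binom{\lfloor(n-1)/2\rfloor}{r-1}+1$ without any asymptotic restriction on $r$ (for all $3\le r\le t$). The method used there is entirely different from your outline: it works directly in the hypergraph, mimicking Dirac's lollipop argument via a carefully optimised pair $(C,P)$ of a Berge cycle and a disjoint Berge path, with a four-level tie-breaking rule. No shadow graph, no Hall-type matching, and no absorption appear.

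Regarding your proposal itself: Step~1 is correct and standard, and the inequality $d_H(v)\le\binom{d_G(v)}{r-1}$ does yield $\delta(G)\ge n/2$. Step~2, however, is not a proof but a description of where the difficulty lies. You correctly identify that an arbitrary Hamilton cycle of $G$ need not lift, and that the naive double-count $|I|\le\binom{r}{2}\bigl|\bigcup_{i\in I}S_i\bigr|$ is insufficient. But neither of your two suggested routes is carried out: the rotation/exchange idea is left as ``delicate'', and the absorption idea is left as ``the bulk of the technical work''. As written this is a plan, not a proof; the actual content of Coulson--Perarnau (which does go via a rainbow Dirac theorem, so your SDR framing is in the right spirit) is precisely the part you have not supplied. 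If your goal is to reprove Theorem~\ref{cpthm} specifically, you would need to execute one of those strategies in full; if your goal is the hamiltonian Berge cycle conclusion itself, the paper's direct best-pair approach avoids the lifting problem altogether and does not need $r=o(\sqrt n)$.
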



Then Ma, Hou and Gao~\cite{MHG} improved the bound of Theorem~\ref{cepthm} for $n \geq 2r+4$.

\begin{thm}[Ma, Hou and Gao~\cite{MHG}]\label{MHGthm}Let $r \geq 4$ and $n \geq 2r+4$, and let $H$ be an $r$-graph on $n$ vertices. If  $\delta(H) \geq {\lfloor (n-1)/2 \rfloor \choose r-1} + \lceil (n-1)/2 \rceil$, then $H$ contains a hamiltonian Berge cycle.
\end{thm}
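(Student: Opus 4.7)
My strategy is to adapt the P\'osa-type rotation-extension technique for Berge paths to $r$-uniform hypergraphs. Suppose for contradiction that $H$ satisfies the hypotheses of Theorem~\ref{MHGthm} but contains no hamiltonian Berge cycle.

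The first step is to show that $H$ contains a \emph{spanning} Berge path. Let $P = x_1 f_1 x_2 \ldots f_{m-1} x_m$ be a Berge path of $H$ with the maximum number of vertices $m$. If $m < n$, maximality forces every edge $f \ni x_1$ to satisfy $f \subseteq V(P)$, for otherwise $P$ could be extended by appending an outside vertex through $f$. Hence $d_H(x_1) \leq \binom{m-1}{r-1}$, and the degree hypothesis forces $m > \lfloor (n-1)/2 \rfloor$. Iterating the rotation at $x_1$ to produce other possible endpoints, one either obtains two endpoints joined by an unused edge (yielding a Berge cycle $C$ of length $>m$, at which point the same analysis applied to a vertex outside $V(C)$ gives a contradiction) or enlarges $P$ until $m = n$. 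Either route produces a spanning Berge path $P^* = y_1 g_1 y_2 \ldots g_{n-1} y_n$.

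Next I would try to close $P^*$ into a hamiltonian Berge cycle. Define
\[
A = \{\,i : 2 \leq i \leq n-1 \text{ and there is } h \notin E(P^*) \text{ with } \{y_1, y_{i+1}\} \subseteq h\,\},
\]
and analogously define $B$ at the other endpoint $y_n$. For each $i \in A$ a standard rotation produces a spanning Berge path with endpoints $y_i$ and $y_n$. Using the degree hypothesis at $y_1$ and counting the $(r-1)$-subsets of $V(P^*)$ that can serve as the ``other'' vertices of an edge through $y_1$, one derives $|A| \geq \lceil (n-1)/2 \rceil$, and symmetrically $|B| \geq \lceil (n-1)/2 \rceil$. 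Pigeonhole, combined with a further round of rotations on both sides, then yields a pair of iterated rotations whose endpoints are connected by an edge not yet used in the rotated path; closing there produces a hamiltonian Berge cycle, contradicting the assumption.

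The main obstacle is the hypergraph bookkeeping for $r$-uniform edges. A single edge $f \ni y_1$ may contain up to $r-1$ candidate rotation partners $y_j$, yet $f$ can serve as the rotation edge for only one of them at a time; moreover, if $f \in E(P^*)$ already, then $f$ is blocked. The binomial $\binom{\lfloor (n-1)/2\rfloor}{r-1}$ absorbs the worst case of $(r-1)$-subsets in a roughly half-vertex subset of $V(P^*)$, while the additive $\lceil (n-1)/2 \rceil$ must pay for the path-edges of $P^*$ that obstruct rotations. The improvement over the $n-1$ additive term in Theorem~\ref{cepthm} should come from showing that at most half of the edges $g_k$ of $P^*$ can simultaneously block rotations at both endpoints, so the effective penalty is roughly $(n-1)/2$. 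Making this halving rigorous, while correctly accounting for edges that contribute to several rotation candidates at once, is the delicate technical core of the proof and the step I expect to be hardest.
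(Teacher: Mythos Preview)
The paper does not prove Theorem~\ref{MHGthm}; it is quoted as a prior result of Ma, Hou and Gao, and the paper's contribution is the sharper Theorem~\ref{main}, which drops the additive $\lceil(n-1)/2\rceil$ entirely. So there is no ``paper's own proof'' of this particular statement to compare against. What the paper does prove (the stronger bound) it proves by a different mechanism than the one you outline: rather than running P\'osa rotations on a longest Berge path, the paper works with a \emph{best pair} $(C,P)$ consisting of a longest Berge cycle $C$ together with a vertex-disjoint Berge path $P$, optimised lexicographically over four parameters, and argues separately that $|V(P)|$ can be neither $1$ nor ${\geq}2$ unless $C$ is hamiltonian. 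The gain of the best-pair framework is that the cycle $C$ is in hand from the outset (via Lemma~\ref{le1} it already has length ${\geq}\,t+2$), so one never has to manufacture the first closing step you describe, and the interplay between $C$ and $P$ gives cleaner forbidden-distance constraints (Claims~\ref{noconsecutive}--\ref{distance}, Lemmas~\ref{verc}--\ref{verc2}) than raw rotation bookkeeping.

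Your outline also contains a concrete error. In the first step you assert that maximality of $P$ forces every edge $f\ni x_1$ to satisfy $f\subseteq V(P)$, and deduce $d_H(x_1)\le\binom{m-1}{r-1}$. That is false for Berge paths: a \emph{path} edge $f_i$ with $x_1\in f_i$ may well contain a vertex outside $V(P)$ without contradicting maximality, since $f_i$ is already used and cannot serve as the extending edge. The correct bound is $d_H(x_1)\le\binom{m-1}{r-1}+|\{i:x_1\in f_i\}|$, and the second summand can be as large as $m-1$. With this correction your degree hypothesis only yields $m\gtrsim n/2$, not $m=n$, so the ``either route produces a spanning Berge path'' claim is unjustified. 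The subsequent steps (deriving $|A|\ge\lceil(n-1)/2\rceil$ and the halving of the path-edge penalty) are exactly where the real work lies, and at present they are assertions rather than arguments; you yourself flag the halving as the delicate core, but no mechanism is offered for it. In short, the proposal is a plausible sketch of a rotation-extension strategy (likely close in spirit to what \cite{MHG} actually do), but as written it is not a proof, and it diverges from the method this paper uses for its stronger theorem.
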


\subsection{Our results}
In this paper  we derive exact bounds for all possible $3\leq r<n$, improving the aforementioned theorems.


\begin{thm}\label{main}
Let $t=t(n)=\lfloor (n-1)/2 \rfloor$, and suppose $3 \leq  r <n$. Let $H$ be an $r$-graph. If

\vspace{-3mm}

\begin{enumerate}
 \item[(a)] $r\leq t$ and $\delta(H) \geq  {t \choose r-1} +1$ or 

\vspace{-2mm}

\item[(b)] $r\geq n/2$ and $\delta(H) \geq r$, 
 \end{enumerate}
 
\vspace{-3mm}

 then $H$ contains a hamiltonian Berge cycle. 
\end{thm}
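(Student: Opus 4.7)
I would handle the two regimes separately, since the structure of $H$ is very different in each.

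\textbf{Case (b), $r \geq n/2$:} here the key observation is that $\delta(H) \geq r$ already forces the $2$-shadow $G := \partial_2(H)$ -- the graph whose edges are those pairs $\{u,v\}$ lying in some $e \in E(H)$ -- to satisfy $\delta(G) \geq r \geq n/2$. Indeed, if $v$ lies in at least $r$ distinct $r$-edges of $H$, the corresponding $(r-1)$-subsets of $V(H) \setminus \{v\}$ are distinct; since $\binom{r}{r-1}=r$, they cannot all sit inside a common $r$-subset, so their union has size $\geq r$. Thus $d_G(v) \geq r \geq n/2$, and Dirac's Theorem produces a hamiltonian cycle $v_1 v_2 \cdots v_n v_1$ in $G$. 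It remains to choose distinct edges $f_1, \ldots, f_n \in E(H)$ with $\{v_i, v_{i+1}\} \subseteq f_i$; this lifting is a bipartite matching problem, and I would verify Hall's condition using the large uniformity, the handshake lower bound $|E(H)| \geq n$, and the freedom to permute the vertex ordering on the shadow cycle.

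\textbf{Case (a), $r \leq t$:} I would proceed by contradiction. Fix a longest Berge path $P = v_1 e_1 v_2 \cdots e_{\ell-1} v_\ell$ in $H$. By maximality of $P$, every edge of $H$ containing $v_1$ or $v_\ell$ must be an $r$-subset of $V(P)$, giving $\delta(H) \leq \binom{\ell-1}{r-1}$ and hence $\ell \geq t+2$. Next, I would apply P\'osa-style rotations on Berge paths: whenever an edge $e$ contains both $v_\ell$ and $v_i$ and $e$ is not already used on $P$, replacing $e_i$ by $e$ produces an alternative longest Berge path ending at $v_{i+1}$. Iterating rotations from both ends produces large sets $L, R \subseteq V(P)$ of alternative left and right endpoints, and the degree hypothesis forces $|L|, |R| \geq \lceil n/2 \rceil$. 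A Chv\'atal--Erd\H{o}s-type pairing argument then yields a Berge cycle on $V(P)$. If $\ell < n$, the degree condition produces an edge from some $w \notin V(P)$ into $V(P)$, giving a Berge path longer than $P$ -- a contradiction. Hence $\ell = n$ and the cycle is hamiltonian.

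\textbf{Main obstacle.} The delicate part is the Berge-rotation bookkeeping: each rotation uses an off-path edge and drops a path edge, so when closing a rotated path into a cycle we must ensure the closing edge has not been consumed by any rotation in the chain leading to the current endpoints. The count is tight because $\binom{t}{r-1}+1$ is the exact threshold, witnessed by two copies of $K^{(r)}_{t+1}$ sharing a common vertex; accordingly the argument must exploit structural information rather than numerical slack, and I anticipate a separate stability-type analysis when $\ell = t+2$ and $H$ is close to the two-clique extremal configuration.
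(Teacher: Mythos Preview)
Your sketch has real gaps in both regimes.

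\textbf{Case (a).} The assertion ``every edge of $H$ containing $v_1$ must be an $r$-subset of $V(P)$'' is false for Berge paths: a path edge $e_j$ with $j\geq 2$ may contain $v_1$ together with vertices outside $V(P)$ without contradicting maximality, since $e_j$ is already used and cannot be prepended. Hence $d_H(v_1)\le\binom{\ell-1}{r-1}$ does not follow; one only gets $d_H(v_1)\le\binom{\ell-1}{r-1}+|\{j:v_1\in e_j\}|$, and the second term can be as large as $\ell-1$. The paper needs a substantial argument (Lemma~\ref{le1}, with two extra tie-breaking rules on the choice of longest path) merely to obtain a cycle of length $t+2$. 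The subsequent rotation scheme inherits the same bookkeeping problem you flag at the end; this is precisely why the paper abandons longest-path rotations and instead works with a \emph{best pair} $(C,P)$ --- a longest cycle together with a vertex-disjoint path, optimised lexicographically over four parameters --- and controls the interaction between the endpoints of $P$ and $C$ via Claims~\ref{noconsecutive}--\ref{distance} and Lemmas~\ref{verc}--\ref{verc2}.

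\textbf{Case (b).} The shadow step is correct, and Dirac indeed yields a Hamiltonian cycle $v_1\cdots v_n$ in $\partial_2 H$. But the lifting is the entire difficulty, and ``verify Hall's condition using large uniformity and $|E(H)|\ge n$'' is not a proof. An edge $e$ with $|e|=r=n/2$ may cover \emph{no} consecutive pair on a given shadow cycle (take $e$ to be every other vertex along it), and a given consecutive pair may lie in only one hyperedge; so neither side of the auxiliary bipartite graph has a usable degree lower bound, and the handshake count $|E(H)|\ge n$ only handles the case $|S|=n$. ``Freedom to permute the vertex ordering on the shadow cycle'' does not help either: Dirac hands you one Hamiltonian cycle, not an arbitrary cyclic order. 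The paper does not attempt a matching argument here; it again runs the best-pair machinery, with a separate lemma (Lemma~\ref{le2}) that bootstraps a $(k-1)$-cycle to one of length $\min\{k,t+2\}$ by exploiting that any two $r$-edges intersect when $r>t$.
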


These bounds are best possible due to the following constructions.

{\bf Construction 1}. Suppose $r \leq t$. If $n$ is odd, let $H_1$ consist of two copies of $K^{(r)}_{(n+1)/2}$ that share exactly one vertex. If $n$ is even, let $H_1$ consist of two disjoint $K^r_{n/2}$ and a single edge intersecting both  cliques. 

{\bf Construction 2}. Suppose $r \leq t$. Let $H_2$ have vertex set $X \cup Y$ such that  $|X| =t$ and $|Y| = n-t$. The edge set of $H_2$ consists of every edge with at most one vertex in $Y$.

{\bf Construction 3}. Suppose $r \geq n/2$. Let $H_3$ be obtained by removing a single edge from an $r$-uniform tight cycle on $n$ vertices.

It is easy to check that both $H_1$ and $H_2$ have minimum degree ${t \choose r-1}$. Observe that neither $H_1$ nor $H_2$ have a hamiltonian Berge cycle: $H_1$ has  either a cut vertex or a cut edge, and in $H_2$ a hamiltonian Berge cycle must visit two vertices in $Y$ consecutively, but no edge of $H_2$ contains any pair of vertices from $Y$. 

Since an $r$-uniform tight cycle is $r$-regular, $\delta(H_3) =r-1$. Also, $H_3$ does not have a hamiltonian Berge cycle because $|E(H_3)| = n-1$. In fact, removing a single edge from {\em any} $r$-regular, $r$-uniform, $n$-vertex hypergraph would also yield an extremal example.

Note that the length of the longest cycle in Construction 1 is $\lceil n/2\rceil$. Thus Theorem~\ref{main} yields exact bounds on the minimum degree guaranteeing the existence of any cycle of length at least  $k$  in $n$-vertex $r$-uniform hypergraphs for all $r\leq t$ and all 
$k\geq 1+n/2$.



We also improve the circumference part of Theorem~\ref{bermond}. Since the bounds for $r\leq t$ and for $r>t$ are different, we state our results as two theorems. 


\begin{thm}\label{main3}Let $n, k,$ and $r$ be positive integers such that $n \geq k$ and $t \geq r \geq 3$. Let $H$ be an $n$-vertex, $r$-uniform hypergraph. If 

\vspace{-3mm}

\begin{enumerate}
\item[(a)] $k \leq r+1$ and $\delta(H) \geq k-1$, or

\vspace{-2mm}

\item[(b)] $r+2\leq k<t+2$ and $\delta(H) \geq {k-2 \choose r-1}+1$, or

\vspace{-2mm}

\item[(c)] $k\geq t+2$ and $\delta(H) \geq {t \choose r-1}  + 1$,
\end{enumerate}
\vspace{-3mm}
then $H$ contains a Berge cycle of length $k$ or longer.
\end{thm}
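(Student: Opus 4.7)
Case (c) follows immediately from Theorem~\ref{main}: under $r \leq t$ and $\delta(H) \geq \binom{t}{r-1} + 1$, Theorem~\ref{main}(a) produces a hamiltonian Berge cycle in $H$, whose length $n \geq k$ meets the conclusion.

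For case (a) I would argue by contradiction. Assume $c(H) < k$ and let $P = v_1 e_1 v_2 \cdots e_{m-1} v_m$ be a longest Berge path in $H$. By maximality of $P$, any non-path edge $f \ni v_1$ satisfies $f \subseteq V(P)$, and if $f \ni v_j$ then $v_1 e_1 v_2 \cdots v_j f v_1$ is a Berge cycle of length $j$, forcing $j \leq k-1$; hence $f \subseteq \{v_1,\ldots,v_{k-1}\}$ and there are at most $\binom{k-2}{r-1}$ such $f$. Similarly, a path edge $e_i \ni v_1$ yields a Berge cycle of length $i$, so $i \leq k-1$. Since $k \leq r+1$ forces $\binom{k-2}{r-1} \in \{0,1\}$, combining this with $d_H(v_1) \geq k-1$ either gives a direct contradiction or pins the structure into a saturated configuration, in which one further rotation through the unique non-path edge produces a Berge path strictly longer than $P$.

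Case (b) is the main case. Assuming $c(H) < k$, I would take a longest Berge path $P = v_1 e_1 \cdots e_{m-1} v_m$ and apply a P\'osa-style rotation-extension argument adapted to Berge paths. As above, non-path edges through $v_1$ are trapped in the short window $\{v_1,\ldots,v_{k-1}\}$, contributing at most $\binom{k-2}{r-1}$ edges; each path edge $e_i \ni v_1$ must satisfy $i \leq k-1$. A path-edge pivot $e_i$ produces a rotated longest Berge path $v_i, e_{i-1}, v_{i-1}, \ldots, v_1, e_i, v_{i+1}, \ldots, v_m$ with new left endpoint $v_i$, so the same local restrictions apply to $d_H(v_i)$; a non-path-edge pivot $f \ni v_1, v_j$ similarly yields a new endpoint $v_{j-1}$ inside the window. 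Iterating rotations produces a set $L$ of candidate left endpoints whose incident edges are all confined to the short window, and a careful double-count of (endpoint, edge) incidences --- using $k < t+2$ to guarantee that rotations can escape the window without closing a cycle shorter than $k$ --- tightens the classical Bermond--Germa--Heydemann--Sotteau count $\binom{k-2}{r-1} + (r-1)$ down to $\binom{k-2}{r-1} + 1$, contradicting $\delta(H) \geq \binom{k-2}{r-1} + 1$.

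The hardest step will be the bookkeeping in case (b): squeezing the slack term from $r-1$ down to $1$ requires that every path edge through the endpoint be absorbed either into a useful rotation (whose new endpoint carries its own degree bound) or into a structural witness for a cycle of length $\geq k$, with no double-counting across the rotated copies of $P$. The constraint $k < t+2$ is essential here, since it guarantees that $V(P)$ has enough room outside the short window for rotations to eventually produce endpoints escaping it; this is the extra freedom unavailable in the hamiltonian-regime proofs of Theorems~\ref{bermond}--\ref{MHGthm}, and it is what enables the sharper additive constant.
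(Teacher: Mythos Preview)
Your plan has a structural circularity. You derive part (c) from Theorem~\ref{main}, but in this paper the implication runs the other way: Section~1.3 states explicitly that Theorems~\ref{main3} and~\ref{main4} together imply Theorem~\ref{main} by setting $k=n$, and no independent proof of Theorem~\ref{main} is given. So invoking Theorem~\ref{main} to dispatch (c) is circular. In fact (c) is where almost all of the paper's effort goes: it is proved via the ``best pair $(C,P)$'' machinery of Sections~3--6 (Lemma~\ref{le1} to get $c(H)\geq t+2$, Lemmas~\ref{ell2}--\ref{ell3} to rule out $\ell=1$, and the case analysis of Section~6). You have misjudged the difficulty distribution: (c) is the hard part, not (b).

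Your sketch for (b) also mislocates the role of the hypothesis $k<t+2$. The paper's proof of (b) never uses this inequality; it only uses $k\geq r+3$. The constraint $k<t+2$ merely marks the boundary where the degree threshold switches from $\binom{k-2}{r-1}+1$ to the weaker $\binom{t}{r-1}+1$, after which the longest-path argument no longer suffices and one must pass to the cycle-plus-path analysis of (c). Concretely, the paper splits (b) into two cases: either some path edge $e_i\ni v_1$ with $i\le k-2$ escapes $X=\{v_1,\dots,v_{k-1}\}$, in which case one forbids all non-path edges through $\{v_1,v_{i+1}\}$ and counts $d_H(v_1)\le\binom{k-3}{r-1}+(k-1)\le\binom{k-2}{r-1}$; or every such $e_i\subseteq X$, forcing every $r$-subset of $X$ through $v_1$ to be an edge, whereupon an explicit rotation moves a vertex $v_i\notin e_{k-1}$ to the endpoint and repeats the count. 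Your ``iterated rotations produce a set $L$ of endpoints'' and ``double-count of (endpoint, edge) incidences'' is too vague to assess, and the claimed reliance on $k<t+2$ suggests the mechanism you have in mind is not the right one.

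For (a), the paper's route differs from yours: it first quotes the edge-count theorems of \cite{KL1} and \cite{EGMSTZ} to handle $4\le k\le r-1$ and $k\in\{r+1,r+2\}$ by an average-degree argument, leaving only $k=3$ and $k=r$ for the direct longest-path analysis. Your direct approach can be made to work, but note that when $k\le r$ there is \emph{no} non-path edge through $v_1$ (since $\binom{k-2}{r-1}=0$), so your ``one further rotation through the unique non-path edge'' does not apply; the paper instead uses that $e_1,\dots,e_{k-1}$ must all contain $v_1$, picks $v'\in e_1\setminus e_{k-1}$, and rotates to contradict this saturation.
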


\begin{thm}\label{main4}
Let $n$, $k$, and $r$ be positive integers such that $n \geq k \geq r\geq 3$, and $r > t$. If $H$ is an $r$-uniform hypergraph 
 with \[\delta(H) \geq \lfloor \frac{r(k-1)}{n} \rfloor+1,\] then $H$ contains a Berge cycle of length $k$ or longer. 
\end{thm}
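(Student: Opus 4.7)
The plan is to combine a degree-sum edge count with a longest-Berge-cycle extension argument, using the hypothesis $r>t$ to ensure pairs of edges of $H$ meet.

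First, a double count converts the minimum-degree assumption into a lower bound on the number of edges:
\[
r\,|E(H)| \;=\; \sum_{v\in V(H)} d_H(v) \;\ge\; n\bigl(\lfloor r(k-1)/n\rfloor+1\bigr) \;\ge\; r(k-1)+1,
\]
so $|E(H)|\ge k$. Since $r>t=\lfloor(n-1)/2\rfloor$ gives $2r\ge n$, we have $|e\cap f|\ge 2r-n\ge 0$ for every pair of edges $e,f\in E(H)$; outside the borderline case where $n$ is even and $r=n/2$ (which needs a short separate treatment), this already forces $|e\cap f|\ge 1$.

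Next I would let $C=v_1e_1v_2e_2\cdots v_\ell e_\ell v_1$ be a longest Berge cycle in $H$ and argue by contradiction that $\ell\ge k$. Assume $\ell<k$; then $|E(H)|\ge k>\ell$ supplies an edge $e^*\in E(H)\setminus E(C)$, and $\ell<k\le n$ supplies a vertex $u\in V(H)\setminus V(C)$. The core step is a splice: find an index $j$ and two distinct edges $f_1,f_2\in E(H)\setminus E(C)$ with $\{u,v_j\}\subseteq f_1$ and $\{u,v_{j+1}\}\subseteq f_2$. Then replacing the segment $v_j\,e_j\,v_{j+1}$ of $C$ by $v_j\,f_1\,u\,f_2\,v_{j+1}$ yields a Berge cycle of length $\ell+1$ whose $\ell+1$ vertices and $\ell+1$ edges are all distinct, contradicting the maximality of $C$. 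The case $k=n$ of the statement coincides with Theorem~\ref{main}(b) and serves as a base.

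The main obstacle is showing that the splice can always be performed. To rule out failure, I would suppose no such triple $(j,f_1,f_2)$ exists for any $u\in V(H)\setminus V(C)$; then for every such $u$ and every $j$ the pair $\{v_j,v_{j+1}\}$ cannot be covered by two distinct external edges through $u$, which substantially restricts how the external edges at $u$ meet $V(C)$. Summing these restrictions over all $u\notin V(C)$ and using the lower bound $d_H(u)\ge\lfloor r(k-1)/n\rfloor+1$, a double count of external-edge incidences contradicts the bound $|E(H)|\ge k$ from the first step. This combinatorial counting is the delicate heart of the proof, and it is tight against the extremal construction of $k-1$ near-regular edges, which admits no Berge cycle of length $k$.
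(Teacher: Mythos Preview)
Your first paragraph is correct and matches the paper exactly: the degree condition forces $|E(H)|\ge k$, and $r>t$ gives $r\ge n/2$. The paper then observes that $r\ge n/2$ implies $\lfloor r(k-1)/n\rfloor+1\ge\lceil k/2\rceil$, and reduces Theorem~\ref{main4} to the stronger Theorem~\ref{main41} (hypotheses: $|E(H)|\ge k$ and $\delta(H)\ge\lceil k/2\rceil$). That theorem is proved by an elaborate ``best pair'' $(C,P)$ argument---a longest cycle $C$ together with a longest vertex-disjoint path $P$, optimized under four nested tie-breaking rules---with separate lemmas guaranteeing $s\ge t+2$ and $\ell\ge 2$, followed by a multi-case analysis in Section~\ref{large}. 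Nothing in the paper resembles your direct splice.

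Your splice argument has a concrete gap. You require two \emph{distinct} edges $f_1,f_2\in E(H)\setminus E(C)$ both containing the same $u\notin V(C)$. But the hypothesis only guarantees $|E(H)|\ge k$; when $|E(H)|=k$ and the longest cycle has length $k-1$, there is exactly one edge $e^*\notin E(C)$, so no such pair $f_1\neq f_2$ exists. This is not a corner case you can push aside: it is precisely the situation in the extremal Construction~3 (an $r$-uniform tight cycle minus one edge, plus one more edge to reach $k$ edges), and the paper spends Lemmas~\ref{notk} and~\ref{notk+1} handling it. Moreover, a vertex $u\notin V(C)$ may have all of its (possibly only $\lceil k/2\rceil$) edges lying in $E(C)$, so there need not be \emph{any} external edge through $u$ at all; your sketch does not address this. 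The ``double count of external-edge incidences'' you propose is not spelled out enough to evaluate, and I do not see how summing incidences could contradict $|E(H)|\ge k$, which is a lower bound, not an upper bound.

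Finally, invoking Theorem~\ref{main}(b) as a ``base'' for $k=n$ is circular: in the paper Theorem~\ref{main}(b) is itself deduced from Theorem~\ref{main4} (via Theorem~\ref{main41}), not the other way around.
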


 Constructions 1 and 2 give sharpness examples for
Theorem~\ref{main3}(c). The constructions below show that for each $k\geq 3$ the bounds of Theorem~\ref{main3}(a,b) are sharp for  infinitely many $n$.

{\bf Construction 4}. Let $r+2\leq k < t+2$. For $n-1$  divisible by $k-2$,  let $H_4$ consist of $(n-1)/(k-2)$ copies of $K^{(r)}_{k-1}$ such that all the cliques share exactly one vertex. 

{\bf Construction 5}. Let $ k \leq r+1\leq t+1$.  For $n-1$  divisible by $r$,  view $V(H_5)$ as the union of $(n-1)/r$ sets $S_1,\ldots,S_{(n-1)/r}$ of $(r+1)$ vertices, all sharing exactly one vertex. The set $E(H_5)$ has arbitrary $k-1$
edges contained in each $S_i$.

We have $\delta(H_4) = {k-2 \choose r-1}$ and $\delta(H_5) = k-2$. A longest Berge cycle in $H_4$ must be contained in a single clique, and hence has length $k-1$. Similarly, a longest Berge cycle in $H_5$ is contained in some $S_i$, and hence has at most $k-1$ edges.

For Theorem~\ref{main4}, it is easy to construct an  analog of Construction 3: an $n$-vertex $r$-uniform hypergraph with $k-1$ edges whose minimum degree is exactly $\lfloor \frac{r(k-1)}{n} \rfloor$. 

\subsection{Outline of the proofs}
As always, $t =t(n) = \lfloor (n-1)/2 \rfloor$. Together, the  circumference results, Theorem~\ref{main3} and Theorem~\ref{main4}, imply the hamiltonian result Theorem~\ref{main} by setting $k=n$. 

First we will prove Parts (a) and (b) of Theorem~\ref{main3}. Then we handle Part (c):
 for large $k$, our minimum degree condition  guarantees the existence  not only of a ``long" Berge cycle, but rather of a hamiltonian Berge cycle.



\medskip

Observe that the inequality $\delta(H) \geq\lfloor \frac{r(k-1)}{n} \rfloor+1$ provides that $H$ has at least $k$ edges.
Hence the following theorem implies 
 Theorem~\ref{main4}.  

\begin{thm}\label{main41}
Let $n$, $k$, and $r$ be positive integers such that $n \geq k \geq r > t$ and $r \geq 3$. If $H$ is an $r$-uniform hypergraph with at least $k$ edges such that
$\delta(H) \geq \lceil \frac{k}{2} \rceil$, then $c(H)\geq k$. 
\end{thm}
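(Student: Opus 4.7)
The plan is to argue by contradiction. Suppose $c(H) \leq k-1$ and let $C = v_1 e_1 v_2 e_2 \cdots v_\ell e_\ell v_1$ be a longest Berge cycle of $H$, so $\ell \leq k-1$. Because $|E(H)| \geq k > \ell$, we may fix some $f \in E(H) \setminus E(C)$. The hypothesis $r > t$ forces $r \geq \lceil n/2 \rceil$, so every hyperedge has size at least $\lceil n/2 \rceil$ and any two hyperedges $e, e'$ satisfy $|e \cap e'| \geq 2r - n$. The aim is to use $f$ together with the minimum-degree condition to produce a Berge cycle of length $\ell+1$, contradicting the maximality of $C$.

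I would split on how $f$ sits relative to $V(C)$: (A) $f \cap V(C) = \emptyset$, (B) $f \subseteq V(C)$, and (C) $\emptyset \neq f \cap V(C) \subsetneq f$. In Case (A) the inequality $n - \ell \geq r \geq \lceil n/2 \rceil$ forces $\ell \leq \lfloor n/2 \rfloor < r$, so no hyperedge fits inside $V(C)$; hence every edge through every $v_i \in V(C)$ meets $V(H) \setminus V(C)$, supplying many candidate ``bridge'' edges, and I would merge $C$ with a Berge path through $f$ using two bridges, checking distinctness via $\delta(H) \geq \lceil k/2 \rceil$. Case (C) is handled by fixing $u \in f \setminus V(C)$ and $v_i \in f \cap V(C)$ and attempting to insert $u$ between $v_i$ and $v_{i+1}$: this requires a second edge $g \in E(H) \setminus (E(C) \cup \{f\})$ with $\{u, v_{i+1}\} \subseteq g$, in which case replacing $e_i$ by the pair $(f, g)$ yields a Berge cycle of length $\ell+1$. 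If no such $g$ exists for any $i$, then the $\lceil k/2 \rceil$ edges through $u$, each meeting $V(C)$ in at least $2r-n$ vertices, collectively rule out the forbidden configuration.

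Case (B) requires $\ell \geq r$, and since the $r$ vertices of $f$ lie in a cyclic sequence of length $\ell$ whose independence number $\lfloor \ell/2 \rfloor$ is strictly less than $\lceil n/2 \rceil \leq r$, $f$ must contain some consecutive pair $\{v_j, v_{j+1}\}$ of $C$. Swapping $e_j$ for $f$ yields a new Berge cycle $C'$ of the same length with $e_j$ now liberated; one then tries to use $e_j$ (either by applying Case (C) recursively with $e_j$ playing the role of $f$, or by iterating further swaps if $e_j$ is itself a chord) to insert a vertex $w \in V(H) \setminus V(C)$, which exists because $\ell < k \leq n$. Each swap reduces the pool of available chord edges by one, so the process must terminate.

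The principal obstacle I anticipate is Case (B): a chord hyperedge does not directly bring in a new vertex, so extending $C$ requires a non-local reshuffling of hyperedges. Making this iteration rigorous — in particular, showing that the process terminates with a genuinely longer cycle rather than merely cycling through relabellings of $C$, and verifying that the bound $\lceil k/2 \rceil$ (rather than a weaker one) is enough to guarantee that a successful insertion eventually occurs — is the most delicate part of the proof.
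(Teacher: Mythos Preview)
Your Case~(B) termination argument is wrong, and this breaks the plan. Swapping $f$ for $e_j$ produces $C'$ with $V(C')=V(C)$; if $e_j\subseteq V(C)=V(C')$ then $e_j$ is now a chord of $C'$, so the number of edges outside the current cycle but contained in its vertex set is unchanged --- the swap does \emph{not} ``reduce the pool by one'', and the iteration can revisit the same family of cycles indefinitely. There is no monovariant. Cases~(A) and~(C) are also only sketches: in Case~(C), ``the $\lceil k/2\rceil$ edges through $u$ collectively rule out the forbidden configuration'' is the statement to be proved, not an argument for it; the edges through $u$ may nearly all lie in $E(C)$ (nothing forbids $u\in e_i$ for many $i$ even though $u\notin V(C)$), and sorting out which incidence patterns permit an insertion and which do not is exactly the hard work you have deferred.

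The paper's route is structurally different. Rather than a longest cycle plus a spare hyperedge, it studies a \emph{best pair} $(C,P)$ with $P$ a Berge path vertex-disjoint from $C$, maximized lexicographically by $|E(C)|$, then $|E(P)|$, then two further tie-breaking counts. A separate longest-path argument (Lemma~\ref{le2}) first establishes $c(H)\geq t+2$; another (Lemma~\ref{ell1}) rules out $|V(P)|=1$, which is essentially your single-outside-vertex Case~(C) situation and already occupies a page of sub-cases that exploit the tie-breaking rules. Only then, with $|V(P)|\geq 2$, can the two endpoints $u_1,u_\ell$ of $P$ --- each of degree $\geq\lceil k/2\rceil$ --- be played against one another on $C$ via a cyclic distance lemma (Claim~\ref{distance} and Lemma~\ref{verc}) to force $|E(C)|\geq k$. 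The path $P$ and its lexicographic optimality supply the leverage that a single spare edge cannot.
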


So, we will prove Theorem~\ref{main41}.

In Section 2, we prove Theorem~\ref{main3}(a,b). In Section 3 we describe the setup of the proofs of Theorems~\ref{main3}(c) and~\ref{main41}. 
The proofs somewhat differ for $r<t$, $r=t$ and $r>t$. But in all cases we will use the same structure of proofs, namely, a modification of Dirac's original proof of his theorem. 


Also, since we always consider only Berge paths and cycles, from now on we drop the word ``Berge" and use cycles and paths to exclusively refer to Berge cycles and Berge paths.

\section{Proof of Theorem~\ref{main3}(a,b)}

We will use the following results.

\begin{thm}[\cite{KL1}]\label{EG}
Let $k \geq 4, r \geq k+1$, and let $ H $ be an $n$-vertex $r$-graph with no Berge cycles of length $k$ or longer. Then $e( H ) \leq \frac{(k-1)(n-1)}{r}$. 
\end{thm}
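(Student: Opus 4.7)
The plan is a rotation-extension argument on a longest Berge path, in the spirit of Dirac and P\'osa, combined with induction on $n$. Let $P = v_0, e_0, v_1, \ldots, e_{\ell - 1}, v_\ell$ be a longest Berge path in $H$, and suppose for contradiction that $e(H) > (k - 1)(n - 1)/r$.

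The core structural step is to prove $d_H(v_\ell) \leq k - 1$, with $v_\ell$ appearing only in the path edges $e_{\ell - k + 1}, \ldots, e_{\ell - 1}$. There are two cases. (i) Any edge $f \ni v_\ell$ not in $\{e_0, \ldots, e_{\ell - 1}\}$ satisfies $f \setminus \{v_\ell\} \subseteq V(P) \setminus \{v_\ell\}$ by maximality of $P$; since $|f| - 1 = r - 1 \geq k > k - 2$, pigeonhole forces some $v_i \in f$ with $i \leq \ell - k + 1$, yielding the forbidden Berge cycle $v_i, e_i, \ldots, v_\ell, f, v_i$ of length $\ell - i + 1 \geq k$, so no such $f$ exists. (ii) Any path edge $e_j$ with $j \leq \ell - k$ containing $v_\ell$ produces the forbidden cycle $v_{j+1}, e_j, v_\ell, e_{\ell-1}, \ldots, e_{j+1}, v_{j+1}$ of length $\ell - j \geq k$. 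Symmetrically, $d_H(v_0) \leq k - 1$.

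The bare bound $d_H(v_\ell) \leq k - 1$ is too weak to close the induction, since removing $v_\ell$ sheds up to $k - 1$ edges but only a single vertex, over-removing by a factor of roughly $r/(k - 1)$. To sharpen it I would iterate P\'osa rotations: each path edge $e_j \ni v_\ell$ with $\ell - k + 1 \leq j \leq \ell - 2$ produces a new longest Berge path ending at $v_{j+1}$, and by the same reasoning $d_H(v_{j+1}) \leq k - 1$. Iterating generates a set $L$ of rotation-reachable ``endpoints'', all of degree $\leq k - 1$, whose incident edges are concentrated in a common support of $k - 1$ consecutive path edges. A P\'osa-style expansion estimate adapted to the hypergraph setting should then force $|L|$ to be large enough that aggregating the degree bounds over $L$ and the leftover vertices gives $\sum_v d_H(v) \leq (k - 1)(n - 1)$, equivalently $e(H) \leq (k - 1)(n - 1)/r$, the desired contradiction.

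The main obstacle is this quantitative accounting. In the graph case $r = 2$ one recovers the Erd\H{o}s--Gallai bound via the classical P\'osa lemma, but when $r \geq k + 1$ there are $r - 1$ ``passive'' vertices per edge that must be charged somewhere. The key leverage is the concentration of each endpoint's edges in a shared support of $k - 1$ path edges, which rotation propagates across the whole set $L$; making the ``$r - 1$ passive vertices'' charge against the $r$ in the denominator is what closes the factor-$r$ gap and yields the sharp constant $(k - 1)/r$.
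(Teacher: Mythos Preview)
This theorem is not proved in the paper; it is quoted from \cite{KL1} and used as a black box in the proof of Theorem~\ref{main3}(a). So there is no ``paper's own proof'' to compare against here.

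That said, your proposal has a genuine gap at the step you yourself flag. The endpoint degree bound $d_H(v_\ell)\le k-1$ (and its rotation-propagated analogues) is correct and is proved exactly as you wrote. The problem is the aggregation. You want $\sum_v d_H(v)\le (k-1)(n-1)$, but even if \emph{every} vertex had degree at most $k-1$ you would only get $(k-1)n$, which is off by $k-1$; and rotations certify low degree only for vertices reachable as endpoints of a longest path, not for all of $V(H)$. The sentence ``a P\'osa-style expansion estimate \ldots\ should then force $|L|$ to be large enough'' is doing all the work and none of it is actually done: you neither specify how large $|L|$ must be, nor explain why the edges incident to $L$ and the edges incident to $V(H)\setminus L$ do not double-count past the target. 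The ``concentration in a common support of $k-1$ path edges'' observation is suggestive but does not by itself produce a bound of the form $(k-1)(n-1)$ rather than $(k-1)n$; to get the sharp $(n-1)$ you need a genuinely global structural statement (in \cite{KL1} this comes from a careful analysis of how the edges of $H$ sit relative to a maximal path/cycle structure, not from a direct P\'osa expansion count). As written, the proposal is a plausible plan whose decisive step is missing.
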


\begin{thm}[Ergemlidze, Gy\H{o}ri, Methuku, Salia, Thompkins, and Zamora~\cite{EGMSTZ}]\label{EGr1}
Let $n\geq r \geq 3$, $k\in \{r+1,r+2\}$, and let $ H $ be an $n$-vertex $r$-graph with no Berge cycles of length $k$ or longer. Then $e( H ) \leq \frac{(k-1)(n-1)}{r}$.  
\end{thm}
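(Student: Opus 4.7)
The plan is to mirror the proof of Theorem~\ref{EG} (which handles $r\ge k+1$) and adapt it to the boundary cases $k\in\{r+1,r+2\}$. I would induct on $n$, taking $n\le k$ as the base case, where the number of edges is trivially at most $\binom{n}{r}$ and one checks this is $\le (k-1)(n-1)/r$ in the relevant range.

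For the inductive step, assume the bound holds for all smaller $r$-graphs and let $H$ have no Berge cycle of length $\ge k$. \textbf{Case A}: some vertex $v$ has $d_H(v)\le (k-1)/r$. Since $(k-1)/r\in\{1,(r+1)/r\}$ and degrees are integers, this forces $d_H(v)\le 1$. Then $H-v$ has $n-1$ vertices and no Berge cycle of length $\ge k$, so by induction $e(H-v)\le (k-1)(n-2)/r$, yielding
\[
e(H)\le \frac{(k-1)(n-2)}{r}+1\le \frac{(k-1)(n-1)}{r},
\]
where the last step uses $(k-1)/r\ge 1$.

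\textbf{Case B}: $\delta(H)\ge 2$. I would show this forces $H$ to contain a Berge cycle of length $\ge k$, contradicting the hypothesis and so eliminating the case entirely. To this end, take a longest Berge path $P=v_0,e_0,v_1,\ldots,e_{\ell-1},v_\ell$. Because $\delta(H)\ge 2$, each endpoint $v_0,v_\ell$ has a further incident edge not among $\{e_0,e_{\ell-1}\}$; by the maximality of $P$, all vertices of any such edge must already lie on $V(P)$ (otherwise $P$ could be extended). A Dirac-style rotation then locates an index $i$ with a free edge through $v_0$ containing $v_{i+1}$ and a free edge through $v_\ell$ containing $v_i$, closing $P$ into a Berge cycle. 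One finally checks that the length of this cycle is at least $r+1$ (respectively $r+2$), using the fact that any incident edge at an endpoint already contains $r-1$ vertices of $V(P)$ and hence pushes $\ell$ to be at least $r-1$, with the additional rotation supplying the final one or two units.

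The main obstacle, and the technical crux of the argument, is the \emph{distinct-edges} constraint that distinguishes Berge cycles from cycles in the $2$-shadow $\partial_2 H$: a closed walk of length $\ell$ in $\partial_2 H$ does not automatically lift to a Berge cycle in $H$, because the $\ell$ consecutive pairs must be witnessed by $\ell$ \emph{different} hyperedges. I would address this by setting up a Hall-type system of distinct representatives on the bipartite incidence graph between path-segments and candidate hyperedges, falling back to a greedy reassignment along the path when the Hall condition is tight. Carrying this SDR bookkeeping through the extra rotation needed for $k=r+2$ (as opposed to $k=r+1$) is where I expect the argument to require its most delicate case analysis.
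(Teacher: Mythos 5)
This theorem is not proved in the paper at all: it is imported as a known (and hard) result of Ergemlidze, Gy\H{o}ri, Methuku, Salia, Tompkins and Zamora \cite{EGMSTZ}, so your attempt has to stand on its own, and it does not. The fatal problem is Case B. For $k\in\{r+1,r+2\}$ it is simply false that $\delta(H)\ge 2$ forces a Berge cycle of length at least $k$; the extremal configurations for this very statement witness this. Construction 4 of the paper (copies of $K^{(r)}_{k-1}=K^{(r)}_{r+1}$ sharing one vertex, for $k=r+2$) has minimum degree $\binom{k-2}{r-1}=r\ge 3$ and circumference $k-1$, and Construction 5 (for $k\le r+1$, choosing the $k-1$ edges inside each $S_i$ to omit distinct vertices) has minimum degree at least $2$ and again no Berge cycle of length $\ge k$. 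So the dichotomy ``either some vertex has degree at most $(k-1)/r$, or we reach a contradiction'' cannot be closed: in the second case you would still have to prove the bound $e(H)\le (k-1)(n-1)/r$ for hypergraphs of minimum degree at least $2$, which is the entire content of the theorem (the average degree permitted by the bound is close to $k-1\approx r+1$, far above $2$). This is consistent with the Dirac-type results in the present paper: by Theorem~\ref{main3}(a,b) one needs minimum degree about $k-1$, respectively $\binom{k-2}{r-1}+1$, to guarantee a Berge cycle of length at least $k$, not minimum degree $2$, and no amount of rotation/SDR bookkeeping at the endpoints of a longest Berge path will manufacture a cycle that provably has $\ge r+1$ or $\ge r+2$ distinct edges under such weak degree hypotheses.

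Two smaller points. First, your base case is not trivial: for $n=k$ the crude bound $\binom{n}{r}$ exceeds $(k-1)(n-1)/r$ (e.g.\ $r=3$, $n=k=5$ gives $\binom{5}{3}=10>16/3$), so the small cases already require a genuine argument about which $r$-graphs on $k$ or fewer vertices avoid long Berge cycles. Second, even in Case A the deletion step only handles vertices of degree at most $1$, which cannot by itself drive an induction for a bound whose sharp examples are essentially $(k-1)$-regular-ish; the actual proof in \cite{EGMSTZ} proceeds along completely different lines (a careful analysis of longest Berge paths/cycles together with shadow- and representative-graph arguments), and the cases $k=r+1,r+2$ were precisely the ones that resisted the simpler counting inductions used for $k\ge r+3$ and $k\le r$.
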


\begin{proof}[Proof of Theorem~\ref{main3}(a).] 

%

Recall that $3\leq k \leq  \min \{r+2,n\}$ and $\delta(H) \geq k- 1$.

By Theorems~\ref{EG} and~\ref{EGr1}, if $4\leq k\leq r-1$ or $r\geq 3$ and $k\in \{r+1,r+2\}$, then $e( H ) \leq \frac{(k-1)(n-1)}{r}$.  
It follows that the average degree of $ H $ is at most 
$$\frac{r}{n} \cdot \frac{(k-1)(n-1)}{r}=\frac{(k-1)(n-1)}{n}<k-1.$$
This gives that $ H $ has a vertex of degree at most $k-2$, a contradiction.

Thus to prove the theorem, we need to settle the remaining cases, namely,  $k=3\leq r$ and  $k=r\geq 4$.
In both cases, consider a counter-example $H$ with the most edges. Then $H$ contains a path of length at least $k-1$.  
Among all such paths, let $P = v_1, e_1, v_2, \ldots, e_{\ell-1}, v_\ell$ be a longest one. 

If there exists a $j \geq k$ such that $v_1 \in e_j$, then $v_1,e_1, v_2, \ldots,e_{j-1}, v_j, e_j, v_1$ is a cycle of length at least $k$. Furthermore, if there exists an edge $e \in E(H) - E(P)$ and a vertex $u \in V(H) - \{v_1, \ldots, v_{k-1}\}$ such that $\{v_1, u\} \subset e$, then either $u \notin V(P)$ and we can extend $P$ to a longer path by adding the vertex $u$ and the edge $e$, or $u \in V(P)$ and we can construct a cycle of length at least $k$ by combining the segment of $P$ from $v_1$ to $u$ with the edge $e$. 
Therefore each edge of $H$ containing $v_1$ either is in $\{e_1, e_2, \dots, e_{k-1}\}$ or is contained in $\{v_1, \ldots, v_{k-1}\}$.
Since $k-1<r$, the latter is impossible. Thus adding the fact that $d(v_1)\geq k-1$, we have that
\begin{equation}\label{au51}
\mbox{\em all edges $e_1,\ldots,e_{k-1}$ contain $v_1$.
}
\end{equation}
Since $H$ has no multiple edges, there is a vertex $v'\in e_1-e_{k-1}$. If $v'\notin \{v_1,\ldots,v_\ell\}$, then we 
consider path $P'$ obtained from $P$ by replacing $v_1$ with $v'$ and keeping all the edges.
It has the same length  as $P$, but $v'\notin e_{k-1}$, contradicting~\eqref{au51}. 

So, suppose $v'=v_j$. Since $v'\notin e_{k-1}$ and $v_1\in e_{k-1}$, $j\notin \{1,k-1,k\}$. If $j\geq k+1$, then we have a cycle
$C_2 = v_2,e_2, v_3, \ldots,e_{j-1}, v_j, e_1, v_2$, a contradiction. Thus $2\leq j\leq k-2$. Consider path 
$$P'' = v_j,e_{j-1}, v_{j-1}, \ldots, e_1, v_1, e_j, v_{j+1},e_{j+1}, v_{j+2},\ldots,e_{\ell-1}, v_\ell.$$
Similarly to $P'$, it has the same length  as $P$, but $v'\notin e_{k-1}$, contradicting~\eqref{au51}. 
\end{proof}


\begin{proof}[Proof of Theorem~\ref{main3}(b).] Recall that
 $k \geq r+3$ and $\delta(H) \geq {k-2 \choose r-1} + 1$.
Suppose the theorem fails, and let $H$ be an edge-maximal counterexample. Then $H$ contains a path of length $k-1$ or greater. Among all such  paths, let $P = v_1,e_1, v_2, \ldots, e_{\ell-1}, v_\ell$ be a longest one. As in the 
proof of Theorem~\ref{main3}(a), each edge of $H$ containing $v_1$ either is in $E(P)$ or is a subset of $\{v_1, \ldots, v_{k-1}\}$.


Set $X = \{v_1, \ldots, v_{k-1}\}$ and $X'=X-v_1$. Let $E_X = \{e \notin E(P): e \subseteq X \}$. The previous paragraph implies that every edge containing $v_1$ belongs to $E_X \cup \{e_1, \ldots, e_{k-1}\}$.

{\bf Case 1}: There exists some $1 \leq i \leq k-2$ such that  $v_1\in e_i$ and $e_i \not\subseteq X $. Let $u \in e_i - X$.

 If there exists an edge $f \in E_X$ such that $\{v_1, v_{i+1}\} \subset f$, then 
\[u, e_i, v_i, e_{i-1}, v_{i-1}, \ldots,e_1, v_1, f, v_{i+1}, e_{i+1}, v_{i+2}, \ldots, e_{\ell-1}, v_\ell\]
is longer than $P$, a contradiction. So, there are no such edges.

If $r=3$, then $i=1$, since otherwise $\{v_1,v_i,v_{i+1}\}\subset e_i$, and there is no room for other vertices in $e_i$. Therefore
\begin{equation}\label{p1}
d_H(v_1) \leq {|X' - \{v_{2}\}| \choose r-1} + |\{e_1, e_2, e_{k-1}\}| ={k-3 \choose r-1}+3 \leq  {k-2 \choose r-1},
\end{equation}
when $k \geq 6$, a contradiction.

Suppose now that $r\geq 4$.
The number of edges in $E_X$ containing $v_1$ is at most ${|X' - \{v_{i+1}\}| \choose r-1} = {k-3 \choose r-1}$. 
Since $k\geq r+3$,  $k\geq 7$ and ${k-3 \choose r-2}\geq {k-3 \choose 2}\geq k-1$. So, 
$$d_H(v_1) \leq  {k-3 \choose r-1}+k-1= {k-2 \choose r-1}-{k-3 \choose r-2}+k-1\leq {k-2 \choose r-1},$$
 a contradiction.

{\bf Case 2}: For all $1 \leq i \leq k-2$ with $v_1\in e_i$, $e_i \subset X $. Then the only possible edge containing $v_1$ that is not a subset of 
$X $ is $e_{k-1}$, and $d_H(v_1) \leq {|X'| \choose r-1} = {k-2 \choose r-1}+1$ with equality if and only if $v_1 \in e_{k-1}$ and every $r$-subset of $X \cup \{v_1\}$ containing $v_1$ is an edge of $H$. 
Hence we may suppose this is the case. 

For each $2\leq i\leq k-1$, let $g_i$ be the $(r-1)$-subset of $X'$ containing $v_i$ and the $r-2$ previous vertices of $X'$(with wrap around). I.e., if $i\geq r$, then $g_i = \{v_i, v_{i-1}, \ldots, v_{i-(r-2)}\}$ and if $i \leq r-1$, then $g_i = \{v_i, v_{i-1}, \ldots, v_{2}\} \cup \{v_{k-1}, \ldots, v_{k-1 - (r-1 - i)}\}$. Then set $f_i = g_i \cup \{v_1\}$. 
Since $k \geq  r+3$ and $\{v_1, v_{k-1}, v_k\} \subset e_{k-1}$, there exists some $2\leq i \leq k-2$ such that $v_i \notin e_{k-1}$. Then using the fact that $f_j \in E(H)$ for all $2\leq j \leq k-1$, the path
$$P_2=v_i, f_i, v_{i-1}, \ldots,f_2, v_1, f_{i+1}, v_{i+1}, f_{i+2}, v_{i+2}, \ldots,f_{k-1}, v_{k-1}, e_{k-1}, v_k, \ldots, e_{\ell-1}, v_\ell$$
is also a longest path. Applying the same argument to its first vertex $v_i$, we have that either $d_H(v_i) \leq {k-2 \choose r-1}$ or $v_i \in e_{k-1}$. In both cases we obtain a contradiction.
\end{proof}

\section{Setup of proofs for Theorems~\ref{main3}(c) and~\ref{main41} and general lemmas}

The original proof by Dirac of Theorem~\ref{dirac} involved two steps.
In the first step, by looking at a longest path, he greedily found a cycle  {of length at least $1+n/2$.} In the second step, he considered a
 {\em lollipop}, i.e. a pair $(C, P)$ such that $C$ is a cycle, $P$ is a path, $E(C) \cap E(P) = \emptyset$, $|V(C) \cap V(P)| = 1$, and the shared vertex of $v\in V(C)\cap V(P)$ is one of the endpoints of $P$. Dirac proved that when $\delta(G)\geq n/2$, the lollipop with the largest $|C|$ and modulo this with the largest $|P|$ can be only a hamiltonian cycle.
 
 Our strategy is in the same spirit, only instead of lollipops we will consider pairs of a cycle $C$  and a disjoint from $C$ path $P$. We will in addition maximize a couple of more parameters.

A pair $(C, P)$ of a cycle $C$ and a disjoint from $C$ path $P$ {\em is better} than a similar pair $(C',P')$ if 
\begin{enumerate}
    \item[(i)] $|E(C)|>|E(C')|$, or
    \item[(ii)] $|E(C)|=|E(C')|$ and $|E(P)|>|E(P')|$,
or
    \item[(iii)] $|E(C)|=|E(C')|$, $|E(P)|=|E(P')|$ and the total number of vertices in $V(P)$ in the edges in $C$ (counted with multiplicities) is greater than the total number of vertices in $V(P')$ in the edges in $C'$, or
    \item[(iv)] all parameters above coincide and    the total number of vertices in $V(P)$ in the edges in $P$ (counted with multiplicities) is greater than the total number of vertices in $V(P')$ in the edges in $P'$.
\end{enumerate}

Similarly to Dirac's proof, we will prove that in all cases, a best pair is a hamiltonian cycle (or contains a cycle of length at least $k$  when we are looking for such cycles).
 
In  all cases there will be 3 steps: first we find a cycle of length at least $1+n/2$, then prove that in the best pair $(C,P)$, $P$ cannot have only one vertex (unless $C$ is as long as we want), and finally show that $P$ also cannot have more than one vertex.

\subsection{General lemmas}
Suppose $(C,P)$ is a best pair with $C = v_1, e_1, \ldots, v_s, e_s, v_1$ and $P = u_1, f_1, \ldots, f_{\ell-1}, u_{\ell}$. 

We consider three subhypergraphs, $H_C,H_P$ and $H'$ of $H$ with the same vertex set $V(H)$:  $E(H_C)=\{e_1,\ldots,e_s\}$, $E(H_P)=\{f_1,\ldots,f_{\ell-1}\}$ and  $E(H')=E(H)-E(H_C)-E(H_P)$. 
Observe that the edges of these three subhypergraphs form a partition of the edges of $H$.
For a hypergraph $F$ and a vertex $u$, We denote by $N_{F}(u) = \{v \in V(F): \{u,v\} \subseteq e \text{ for some } e \in F\}$. 
For $i \in \{1, \ell\}$, set $B_i=\{e_j \in E(C): u_i \in e_j\}$.

The following claim applies to all best pairs $(C,P)$, regardless of the sizes of $r$ and $k$. It will be used in the sections below.

\begin{claim}\label{noconsecutive}
In a best pair $(C,P)$, $N_{H'}(u_1)$ cannot contain a pair of vertices that are consecutive in $C$.
\end{claim}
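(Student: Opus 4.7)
The plan is to assume for contradiction that $v_i,v_{i+1}\in N_{H'}(u_1)$ for some $i$ (indices mod $s$), and to exhibit a pair that strictly dominates $(C,P)$ in the best-pair ordering. By definition of $N_{H'}$, one may choose edges $e,e'\in E(H')$ with $\{u_1,v_i\}\subseteq e$ and $\{u_1,v_{i+1}\}\subseteq e'$. The clean case is $e\neq e'$: I would insert $u_1$ between $v_i$ and $v_{i+1}$ on $C$ by replacing the cycle edge $e_i$ with the two new edges, forming the Berge cycle
\[
C^*=v_1,e_1,\ldots,v_i,e,u_1,e',v_{i+1},e_{i+1},\ldots,v_s,e_s,v_1
\]
of length $s+1$; this is a valid Berge cycle since $e,e'\in E(H')$ are distinct and disjoint from $E(C)\cup E(P)$. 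Because $u_1\in V(C^*)$, I trim $P$ by removing $u_1$ (and the edge $f_1$ if $\ell\geq 2$); if $\ell=1$ I replace $P^*$ by any vertex in $V(H)\setminus V(C^*)$, and if no such vertex exists then $C^*$ is already a hamiltonian Berge cycle. The resulting pair satisfies $|E(C^*)|>|E(C)|$, contradicting criterion (i).

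Now suppose every such choice forces $e=e'$, so $\{u_1,v_i,v_{i+1}\}\subseteq e$. I would split on whether $u_1\in e_i$. If $u_1\in e_i$, then $e_i$ and $e$ are two distinct edges of $H$ that together witness $\{u_1,v_i\}$ and $\{u_1,v_{i+1}\}$, so the same insertion trick works with $(e_i,e)$ playing the role of $(e,e')$, again producing a cycle of length $s+1$ and a contradiction. The remaining subcase, $u_1\notin e_i$, is the technical crux: no direct two-edge insertion of $u_1$ is available. Here my plan is to swap $e$ for $e_i$ in the cycle, producing
\[
C^\dagger=v_1,e_1,\ldots,v_i,e,v_{i+1},e_{i+1},\ldots,v_s,e_s,v_1,
\]
of the same length as $C$, keeping $P$ unchanged. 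The change in criterion (iii) is $|V(P)\cap e|-|V(P)\cap e_i|$, which receives a $+1$ contribution from $u_1\in e\setminus e_i$; if the net change is positive then $(C^\dagger,P)$ beats $(C,P)$ and we are done. Otherwise some $u_j$ with $j\geq 2$ must lie in $e_i\setminus e$, and I would then use $u_j$ together with the now-freed edge $e_i$ (which has joined the new $H'$) to produce either a further extension of $P$ or a rearrangement that strictly improves (iii) or (iv).

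The main obstacle is precisely this last subcase, where one Berge edge of $H'$ certifies both consecutive neighbors but cannot be split across two insertion slots, and the cycle edge $e_i$ avoids $u_1$ so it cannot play the second role either. Resolving it forces reliance on the finer tie-breaking criteria of the best-pair ordering rather than on a direct lengthening of $C$, and I expect the bulk of the proof's work to concentrate there.
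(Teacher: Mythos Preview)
Your handling of the case $e\neq e'$ is correct and matches the paper, and your treatment of $u_1\in e_i$ (inserting $u_1$ via the pair $(e_i,e)$) also works. The gap is in your final subcase, and it is much simpler than you expect: it resolves via criterion (i), not (iii) or (iv).

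The paper organizes the $e=e'$ case differently. Rather than splitting on whether $u_1\in e_i$, it asks whether $e_i$ meets $V(P)$ at all. If $u_j\in e_i$ for \emph{some} $1\le j\le\ell$, then replacing the segment $v_i,e_i,v_{i+1}$ of $C$ by
\[
v_i,\;e,\;u_1,\;f_1,\;u_2,\;\ldots,\;f_{j-1},\;u_j,\;e_i,\;v_{i+1}
\]
yields a Berge cycle of length $s+j>s$: the edge $e$ carries $v_i$ to $u_1$, the path edges $f_1,\ldots,f_{j-1}$ are absorbed into the cycle, and $e_i$ (which contains $\{u_j,v_{i+1}\}$) closes back to $v_{i+1}$. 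All edges used are distinct because $e\in E(H')$, the $f$'s lie in $E(H_P)$, and the remaining $e$'s lie in $E(H_C)$. Your subcase $u_1\in e_i$ is just the instance $j=1$ of this construction. Your ``main obstacle'' subcase is precisely ``$u_1\notin e_i$ but some $u_j\in e_i$ with $j\ge 2$'', and the same construction applies verbatim; you already identified the right ingredients ($u_j$ and the freed edge $e_i$) but guessed the wrong mechanism. No swap and no appeal to (iii) or (iv) is needed here.

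Only when $e_i\cap V(P)=\emptyset$ does the paper invoke criterion (iii): then swapping $e$ for $e_i$ strictly increases the (iii)-count, since $u_1\in e$ while no $u_j$ lies in $e_i$. This is exactly your ``net change positive'' observation, and under the hypothesis $e_i\cap V(P)=\emptyset$ it is automatic. So the case you flagged as the crux is in fact a one-line lengthening of $C$, and the proof is short throughout.
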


\begin{proof}
Suppose toward a contradiction that $v_i, v_{i+1}$ are contained in edges of $H'$ with $u_1$. Let $e, e' \in E(H')$ be such that $u_1, v_i \in e$ and $u_1, v_{i+1} \in e'$. If $e \neq e'$, then replacing $e_i$ with $e,u_1, e'$ gives a longer cycle than $C$, a contradiction. Thus we may assume $e = e'$.

If there is $1 \leq j \leq \ell$ such that $u_j \in e_1$, then by replacing the path $v_i, e_i, v_{i+1}$ in $C$ with the longer path $v_i, e, u_1, f_1, u_2, \ldots, f_{j-1}, u_j, e_1, v_{i+1}$, we obtain a longer cycle than $C$. Thus $e_1 \cap V(P) = \emptyset$. Then replacing $e_i$ with $e$ in $C$ gives a cycle $C'$ with $(C',P)$ better than $(C,P)$ by criterion (iii).
\end{proof}

Symmetrically, the claim holds for $u_\ell$ as well.

\begin{claim}\label{notneighbor}
For any $u \notin V(C)$, if $u \in e_i$, then $v_i, v_{i+1} \notin N_{H - H_C}(u)$. 
\end{claim}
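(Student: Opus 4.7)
The plan is to argue by contradiction: if some $u\notin V(C)$ lies in $e_i$ and has, say, $v_i$ as a neighbor through an edge $e'\in E(H_P)\cup E(H') = E(H)\setminus E(H_C)$, I will construct a new pair $(C',P')$ that beats $(C,P)$ under criterion (i) of the ordering, namely $|E(C')|>|E(C)|$. The case of $v_{i+1}$ is handled identically by swapping the roles of $e'$ and $e_i$.

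The core move is to insert $u$ into $C$ between $v_i$ and $v_{i+1}$ by forming
\[
C' \;:=\; v_1,e_1,\ldots,v_i,e',u,e_i,v_{i+1},e_{i+1},\ldots,v_s,e_s,v_1.
\]
This is a valid Berge cycle because $u\notin V(C)$ makes the vertex list distinct, $e'\notin E(H_C)$ makes the edge list distinct, and the two new consecutive incidences $\{v_i,u\}\subseteq e'$ and $\{u,v_{i+1}\}\subseteq e_i$ both hold by hypothesis (the second one uses $u\in e_i$). Hence $|E(C')|=|E(C)|+1$.

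It remains to exhibit a path $P'$ disjoint from $C'$. Since $V(C)\cap V(P)=\emptyset$ and $E(C)\cap E(P)=\emptyset$, the only possible overlaps are $V(C')\cap V(P)\subseteq\{u\}$ and $E(C')\cap E(P)\subseteq\{e'\}$. I therefore delete the vertex $u$ from $P$ (if $u\in V(P)$) and the edge $e'$ from $P$ (if $e'\in E(H_P)$); these two excisions split $P$ into a bounded number of subpaths, each both vertex- and edge-disjoint from $C'$. I take any nonempty such subpath as $P'$.

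If every resulting piece is empty, then necessarily $V(P)=\{u\}$ and $P$ is a trivial one-vertex path. In that case, either there exists $w\in V(H)\setminus V(C')$, and I set $P':=(w)$ as a length-$0$ path, or else $V(C')=V(H)$, in which case $C'$ is already a hamiltonian Berge cycle, directly yielding the conclusion of whichever theorem is being proved. In every subcase the resulting pair $(C',P')$ is valid with $|E(C')|>|E(C)|$, contradicting the best-pair choice. I expect the only mildly delicate step to be the bookkeeping when both $u\in V(P)$ and $e'\in E(H_P)$ hold simultaneously, but since only one vertex and one edge are removed from $P$, the surviving subpath analysis is routine.
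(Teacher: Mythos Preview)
Your proof is correct and follows essentially the same approach as the paper: both insert $u$ into $C$ between $v_i$ and $v_{i+1}$ using the pair of edges $e'$ and $e_i$ to obtain a strictly longer cycle. The paper's proof is a single sentence (``replacing $e_i$ with $(e,u,e_i)$ gives a longer cycle, contradiction''), treating the existence of a longer cycle as an immediate contradiction to the choice of $(C,P)$; your version is more scrupulous in explicitly producing a companion path $P'$ disjoint from $C'$ and handling the boundary case where $C'$ is already hamiltonian, but this extra bookkeeping does not change the argument.
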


\begin{proof}
Suppose $v_i \in N_{H - H_C}(u)$, and let $e \in E(H)-E(H_C)$ be such that $\{u, v_i\} \subseteq e$. Then we can find a longer cycle by replacing $e_i$ with $(e, u, e_i)$, a contradiction to our choice of $C$. A similar argument holds for $v_{i+1}$. 
\end{proof}

\begin{claim}\label{distance}For every $e_i \in B_1, e_j \in B_\ell$ either $i=j$ or $|i - j| \geq \ell$.\end{claim}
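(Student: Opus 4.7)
The plan is to argue by contradiction: assume $i \neq j$, without loss of generality $i < j$, and $j - i < \ell$. I will construct a Berge cycle $C'$ strictly longer than $C$, which contradicts the maximality of $(C, P)$ via criterion (i). The idea is to splice $P$ into $C$ using $e_i$ and $e_j$ as attachment edges, while discarding the arc of $C$ that runs through $e_{i+1}, \ldots, e_{j-1}$.

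Specifically, I would take
\[C' := v_{j+1}, e_{j+1}, v_{j+2}, \ldots, v_s, e_s, v_1, e_1, \ldots, v_i, e_i, u_1, f_1, u_2, \ldots, f_{\ell-1}, u_\ell, e_j, v_{j+1}.\]
Here $e_i$ plays the Berge-edge role for the consecutive pair $\{v_i, u_1\}$, which is legitimate because $v_i \in e_i$ (the $i$-th edge of $C$ contains $v_i$ and $v_{i+1}$) and $u_1 \in e_i$ (since $e_i \in B_1$); analogously $e_j$ serves $\{u_\ell, v_{j+1}\}$. The remaining $e$'s and $f$'s keep their original roles from $C$ and $P$. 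Counting the edges: the complementary arc of $C$ contributes $(s - j) + (i - 1)$ edges, $P$ contributes $\ell - 1$ edges, and we add $e_i$ and $e_j$, giving $|E(C')| = s + \ell - (j - i) > s$.

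All vertices of $C'$ are distinct since $V(C) \cap V(P) = \emptyset$ and the $v$'s (respectively $u$'s) are internally distinct, and all of its edges are distinct since $E(C) \cap E(P) = \emptyset$ and $e_i \neq e_j$ (as $i \neq j$). Hence $C'$ is a legitimate Berge cycle strictly longer than $C$. If $V(C') = V(H)$ then $C'$ is already a hamiltonian Berge cycle, which is stronger than what is needed; otherwise any vertex $u \notin V(C')$ serves as a trivial path $P' = u$ disjoint from $C'$, making $(C', P')$ strictly better than $(C, P)$ by criterion (i) --- the desired contradiction. I expect the main obstacle to be merely the bookkeeping: verifying the Berge conditions at the two splice points and tidying the degenerate indexing when $i = 1$ or $j = s$, where one of the edge ranges in the display for $C'$ collapses to empty. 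Both are routine and do not disturb the argument.
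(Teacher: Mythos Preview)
Your proof is correct and follows essentially the same approach as the paper: assume $i<j$ with $j-i\le\ell-1$, splice $P$ into $C$ via the edges $e_i$ and $e_j$ to produce a Berge cycle of length $s-(j-i)+\ell>s$, contradicting the maximality of $C$. Your version is simply more explicit---you write out the cycle in full, verify the Berge conditions at the splice points, check distinctness of vertices and edges, and separately handle the case that $C'$ is hamiltonian---whereas the paper compresses all of this into two sentences.
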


\begin{proof}
Suppose there exists $e_i \in B_1, e_j \in B_\ell$ such that without loss of generality $j > i$ and $j - i \leq \ell-1$. Then that cycle obtained by replacing $v_i, e_i, \ldots, e_{j}, v_{j+1}$ in $C$ with $v_i, e_i, u_1, f_1, \ldots, f_{\ell-1}, u_\ell, e_j, v_{j+1}$ has size $|V(C)| - (i-j) + \ell > |V(C)|$, a contradiction.
\end{proof}

%
%

\begin{claim}\label{indep} If $C=v_1,e_1,\ldots,v_s,e_s,v_1$ be a graph cycle, and $A$ is any set of $c$  edges of $C$ and $I$ is an independent subset of
$\{v_1, \ldots, v_s\}$ disjoint from all edges in $A$, then $|I|\leq  \lceil \frac{s-1 - c}{2} \rceil$.
\end{claim}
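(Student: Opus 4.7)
The plan is a short double-counting argument on the edges of $C$ itself. For each vertex $v\in I$, let $e_v^-$ and $e_v^+$ denote the two cycle-edges of $C$ incident to $v$. The idea is that the vertices of $I$ produce $2|I|$ distinct edges lying in $E(C)\setminus A$, which immediately gives $|I|\leq \lfloor (s-c)/2\rfloor$.

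First I would observe that since every vertex of $I$ is disjoint from all edges in $A$, no $v\in I$ is an endpoint of any edge in $A$; therefore both $e_v^-$ and $e_v^+$ lie in $E(C)\setminus A$. Next, because $I$ is independent in $C$, no two vertices of $I$ are consecutive along the cycle, so no single cycle-edge is incident to two distinct vertices of $I$. Consequently, the multiset $\{e_v^-,e_v^+ : v\in I\}$ consists of $2|I|$ pairwise distinct edges, all contained in $E(C)\setminus A$.

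Combining these two observations yields $2|I|\leq |E(C)\setminus A|=s-c$, and since $|I|$ is an integer, $|I|\leq \lfloor (s-c)/2\rfloor$. The identity $\lfloor n/2\rfloor=\lceil (n-1)/2\rceil$, valid for every integer $n$, then rewrites the bound as $\lceil (s-1-c)/2\rceil$, which matches the statement.

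I do not anticipate a genuine obstacle; the only minor point is the degenerate case $c=s$, where every vertex of $C$ is an endpoint of some edge of $A$, forcing $I=\emptyset$ and giving the vacuous bound $\lceil -1/2\rceil=0$, which the argument above still produces via $2|I|\leq 0$.
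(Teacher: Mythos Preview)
Your proof is correct, but it takes a different route from the paper's. The paper proceeds by induction on $s$: in the base case $s=3$ it checks the bound directly, and in the inductive step it picks an edge $e_i\in A$, contracts it to obtain a shorter cycle $C'$, observes that $I$ remains independent in $C'$ and disjoint from $A\setminus\{e_i\}$, and applies the inductive hypothesis. Your argument, by contrast, is a single-step double count: each vertex of $I$ contributes two distinct edges to $E(C)\setminus A$, so $2|I|\le s-c$, and the floor--ceiling identity finishes it. Your approach is shorter and avoids the contraction machinery entirely; the paper's induction is perhaps more self-contained in that it does not rely on the identity $\lfloor n/2\rfloor=\lceil (n-1)/2\rceil$, but otherwise your counting argument is the cleaner of the two.
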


\begin{proof}
 We show the claim by induction on $s$. If $s = 3$, then either $c \geq 1$, in which case any independent set disjoint from the edges of $A$ has at most one vertex, or $c \geq 2$, and no vertices are disjoint from $A$. Hence we get $|I| \leq \lceil \frac{2 - c}{2} \rceil$.

 Now let $s > 3$ and suppose the lemma holds for $s-1$. If $A = \emptyset$, then $|I| \leq \lfloor s/2 \rfloor = \lceil \frac{s-1}{2} \rceil$, as desired. So suppose $A$ has at least one edge, say $e_i$. Let $C'$ be the cycle obtained by contracting $e_i$. Since $e_i \in A$, $v_i, v_{i+1} \notin I$. Therefore $I$ is still an independent set in $C'$ and is disjoint from the edges in $A - \{e_i\}$. By the induction hypothesis applied to $C', A - \{e_i\},$ and $I$, $|I| \leq \lceil \frac{(s-1) - 1 - (c-1)}{2}\rceil = \lceil \frac{s-1 - c}{2} \rceil$.
 \end{proof}
 
  Claims~\ref{noconsecutive},~\ref{notneighbor} and Lemma~\ref{indep} imply the following corollary.
 
\begin{cor}\label{cycleneighbors}Let $A= \{e_i \in E(C): u_1 \in e_i\}$. Then $|N_{H'}(u_1) \cap V(C)| \leq \lceil \frac{s-1-|A|}{2}\rceil$.
\end{cor}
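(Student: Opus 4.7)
The plan is to set $I := N_{H'}(u_1) \cap V(C)$ and verify that $I$ satisfies the two hypotheses of Claim~\ref{indep} with the given edge-set $A$. Once this is done, the bound $|I| \le \lceil (s-1-|A|)/2\rceil$ follows immediately from that claim.

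First I would note that, because $(C,P)$ is a pair with $V(C)\cap V(P)=\emptyset$, the endpoint $u_1$ of $P$ lies outside $V(C)$, so Claim~\ref{notneighbor} applies to $u_1$. For each $e_i\in A$ (that is, each $e_i\in E(C)$ containing $u_1$), Claim~\ref{notneighbor} gives $v_i,v_{i+1}\notin N_{H-H_C}(u_1)$. Since $E(H')=E(H)-E(H_C)-E(H_P)\subseteq E(H)-E(H_C)$, we have $N_{H'}(u_1)\subseteq N_{H-H_C}(u_1)$, and therefore $I$ is disjoint from both endpoints of every edge in $A$.

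Second, Claim~\ref{noconsecutive} asserts that $N_{H'}(u_1)$ contains no pair of vertices that are consecutive in $C$; restricted to $V(C)$ this says $I$ is an independent set in the graph cycle $v_1 v_2\cdots v_s v_1$. Now applying Claim~\ref{indep} to $C$, the edge-set $A$ of size $|A|$, and the independent set $I$ yields $|I|\le \lceil (s-1-|A|)/2\rceil$, which is exactly the claim of the corollary. There is no real obstacle here: the statement is essentially a bookkeeping combination of the three preceding results, and the only substantive thing to check is the inclusion $N_{H'}(u_1)\subseteq N_{H-H_C}(u_1)$, which is immediate from the definition of $H'$.
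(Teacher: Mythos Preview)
Your proposal is correct and follows exactly the route the paper intends: the paper merely states that Claims~\ref{noconsecutive}, \ref{notneighbor} and Claim~\ref{indep} together imply the corollary, and you have spelled out precisely how those three results combine. The only implicit identification you make (and the paper makes) is between a hyperedge $e_i\in A$ and the corresponding graph-cycle edge $\{v_i,v_{i+1}\}$ when invoking Claim~\ref{indep}; this is harmless and intended.
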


The following general lemmas will be used in conjunction with Claim~\ref{distance} later in our proof.

\begin{lemma}\label{verc} Let $C=v_1,e_1,\ldots,v_s,e_s,v_1$ be a graph cycle. Let $A$ and $B$ be nonempty subsets of $E(C)$ such that for any $e_i\in A$ and $e_j\in B$ either $i=j$ or $ |i -j| \geq q\geq 2$. 
Suppose $|B|\geq |A|=a$. Then either\\
(a) $a\leq  s/2-q+1$, or
(b) $B=A$ and $a\leq s/q$.
\end{lemma}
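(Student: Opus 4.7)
The plan is to translate the hypothesis into a pairwise separation condition among classes of $A \cup B$ and extract the bound by counting cyclic gaps around $C$. Set $X = A \setminus B$, $Y = B \setminus A$, $Z = A \cap B$, so that $|A|=|X|+|Z|$ and $|B|=|Y|+|Z|$; the assumption $|B|\geq |A|$ becomes $|Y|\geq |X|$. Declare $X$ a single class, $Y$ a single class, and each $z \in Z$ its own singleton class. The separation hypothesis then says exactly that any two elements from different classes are at cyclic distance at least $q$ on $C$: two elements of $X$ (or two of $Y$) are unconstrained, while the singleton convention automatically forces $Z$ to be $q$-separated. Let $k$ denote the number of nonempty classes.

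Next, list the $N = |X|+|Y|+|Z|$ elements of $A \cup B$ cyclically around $C$, with gaps $d_1, \dots, d_N$ summing to $s$. At a cyclic adjacency whose endpoints lie in different classes, the separation forces $d_i \geq q$; elsewhere $d_i \geq 1$. Letting $T$ be the number of such transitions,
\[
s = \sum_{i=1}^{N} d_i \geq qT + (N-T) = N + T(q-1),
\]
so $N \leq s - T(q-1)$. Any cyclic word using $k \geq 2$ distinct symbols has at least $k$ transitions, so $N \leq s - k(q-1)$ whenever $k \geq 2$.

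From here I would case-analyze. If $A=B$, then $X=Y=\emptyset$ and $k=a$, giving $aq \leq s$, i.e.\ case (b). If $A \neq B$, then $|B|\geq |A|$ forces $Y \neq \emptyset$, and there are three subcases. (i) $X=\emptyset$ (so $A\subsetneq B$) and $k=|Z|+1$: combined with $|Y|\geq 1$, the master inequality simplifies to $aq \leq s-q$, i.e.\ $a \leq s/q - 1$. (ii) $X\neq \emptyset$ and $Z=\emptyset$, so $k=2$: combined with $|Y|\geq |X|$ one obtains $2a \leq s - 2(q-1)$. (iii) All of $X,Y,Z$ nonempty with $k=|Z|+2$: rearranging the master inequality gives $|X|+|Y|+q|Z|\leq s-2q+2$, and using $|Y|\geq |X|$ together with $q \geq 2$ (so $q|Z|\geq 2|Z|$) yields $2a = 2(|X|+|Z|) \leq s-2q+2$. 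In each subcase the bound is at most $s/2-q+1$, giving case (a); the only nontrivial comparison, $s/q-1 \leq s/2-q+1$ in subcase (i), reduces to $(q-2)(s+2q)\geq 0$, immediate for $q \geq 2$.

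The main organizational point, and what makes the proof go through, is treating each element of $Z=A\cap B$ as its own singleton class: this is what forces $T \geq |Z|+2$ in subcase (iii), producing exactly the factor $q|Z|$ needed to absorb $|Z|$ into the bound once $q\geq 2$. Lumping $Z$ together into a single class would lose this and weaken the resulting bound to something strictly weaker than $s/2 - q + 1$.
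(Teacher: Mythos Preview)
Your proof is correct, and the organization via classes and cyclic-word transitions is cleaner than the paper's argument. The paper instead partitions $C$ into the $a$ arcs between consecutive elements of $A$, picks one arc containing an element of $B\setminus A$ to extract $2(q-1)$ edges outside $A\cup B$, and then separately accounts for each element of $A\cap B$ (each such element forces both neighboring edges on $C$ to lie outside $A\cup B$); it finishes by splitting on whether $A\subset B$. Both are gap-counting arguments on the cycle; yours unifies the casework through the single inequality $s\geq N+T(q-1)$ together with the combinatorial fact $T\geq k$, which is what makes the treatment of $Z=A\cap B$ as singleton classes pay off so neatly in subcase~(iii).

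One slip to fix: in subcase~(i) the comparison $s/q-1\leq s/2-q+1$ does not reduce to $(q-2)(s+2q)\geq 0$ but to $(q-2)(s-2q)\geq 0$, which is not automatic from $q\geq 2$ when $q>2$. Fortunately you have already established $aq\leq s-q$ in this subcase, and since $a\geq 1$ this gives $s\geq 2q$, so the needed inequality does hold---just not for the reason stated.
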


\begin{proof} Suppose first $B= A$. Then between any two edges of $A$ on $C$ there are at least $q-1$ other edges. 
This proves (b).

Suppose now $B\neq A$. Let $A=\{e_{i_1},\ldots,e_{i_a}\}$ with vertices in clockwise order on $C$.
 We can view $C$ as the union of  $a$ paths $P_1,\ldots, P_a$ where $P_j$ is the part of $C$ from $e_{i_j}$ to $e_{i_{j+1}}$ (modulo $a$).  
Since $|B|\geq a$, there is some $f\in B-A$, say $f\in P_a$.
Then $P_a$ has at least $2(q-1)$ edges not in $A\cup B$ (and some vertices in $B$). Also, if $e_{i_j}\in A\cap B$, then 
$e_{i_j-1},e_{i_j+1}\notin A\cup B$. This means $|E(C)-A-B|\geq 2(q-1) +(|A\cap B|-1)$ with equality only if $A\subset B$.
Thus if $A\not\subset B$, then since $|B|\geq a$,
\begin{equation}\label{n23}
s\geq |A|+|B-A|+2(q-1) +|A\cap B|\geq 2a+2(q-1),
\end{equation}
as claimed. Otherwise, in view of $f$, $|B|\geq a+1$, and instead of~\eqref{n23}, we get
$$s\geq |A|+|B-A|+2(q-1) +|A\cap B|-1\geq (2a+1)+2(q-1)-1=2a+2(q-1),$$
again.\end{proof}

\begin{lemma}\label{verc2} Let $C=v_1,e_1,\ldots,v_s,e_s,v_1$ be a graph cycle. Let $A$ and $B$ be nonempty independent subsets in $V(C)$ such that for any $v_i\in A$ and $v_j\in B-A$, $|i - j| \geq q\geq 2$. 
If $B-A\neq \emptyset$, then $|A|\leq s/2-q+1$.
\end{lemma}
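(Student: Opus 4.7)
The plan is to run an edge-counting argument on $C$ in the spirit of Lemma~\ref{verc}, but using a single witness vertex from $B-A$ to force one long arc among the gaps cut out by $A$. The other gaps are short but not trivially so, because $A$ is independent in $C$, so each such gap must skip at least one cycle vertex.

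First I would list $A=\{v_{i_1},\dots,v_{i_a}\}$ in cyclic order along $C$, where $a:=|A|$. These $a$ vertices partition the edges of $C$ into $a$ arcs; since $A$ is an independent set in $C$, each of these arcs has length at least $2$. Next, fix any $v_j\in B-A$ (possible by hypothesis). It lies in exactly one of these arcs, say the one between $v_{i_b}$ and $v_{i_{b+1}}$. Reading $|i-j|\geq q$ as cyclic distance on $C$ (the convention used in Claim~\ref{distance} and Lemma~\ref{verc}), there are at least $q$ edges of $C$ on the $v_{i_b}$-side of $v_j$ and at least $q$ edges on the $v_{i_{b+1}}$-side, so this particular arc has length at least $2q$. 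Summing the lengths of the $a$ arcs gives
\[
s \;\geq\; 2q + 2(a-1) \;=\; 2a+2q-2,
\]
which rearranges to $|A|=a\leq s/2-q+1$, as required.

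There is no substantive obstacle. The only delicate point is the degenerate case $a=1$: then the ``arc containing $v_j$'' is all of $C$ minus $\{v_{i_1}\}$, and the two-sided cyclic distance condition still forces $s\geq 2q$, which is consistent with the claimed bound $1\leq s/2-q+1$. Note that independence of $B$ is never needed; only the nonemptiness of $B-A$ matters, so this argument is strictly weaker than the hypotheses of the lemma allow, which fits the intended downstream use alongside Claim~\ref{distance}.
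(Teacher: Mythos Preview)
Your argument is correct and is essentially the same as the paper's: both list $A$ in cyclic order, use one witness $y\in B-A$ to force a single long arc of size at least $2q$, and use independence of $A$ to make every other arc contribute at least $2$, summing to $s\geq 2a+2(q-1)$. The only cosmetic difference is that you count edges along the arcs while the paper counts vertices (splitting them into $A$, the internal vertices of the long arc, and one internal vertex per remaining arc); your observation that independence of $B$ is never used is also consistent with the paper's proof.
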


\begin{proof} Let $A=\{v_{i_1},\ldots,v_{i_a}\}$ with vertices in clockwise order on $C$.
 We  view $C$ as the union of  $a$ paths $P_1,\ldots, P_a$ where $P_j$ is the part of $C$ from $v_{i_j}$ to $v_{i_{j+1}}$ (modulo $a$).  
Since $B-A\neq\emptyset$, we may assume there is $y\in (B-A)\cap E( P_a)$.
Then $P_a$ has at least $2(q-1)$ vertices not in $A\cup B$ and at least one in $B$. 
Since $A$ is independent, we also have at least $a-1$ vertices in $V(C-P_a)-A$.
 Hence $|V(C)|\geq a+a+ 2(q-1)$, as claimed. \end{proof}

\section{Existence of a cycle of length at least $n/2+1$.}

Similarly to Dirac's proof, we show that under the conditions of Theorems~\ref{main3}(c) and~\ref{main41} there exists a cycle of length at least $t+2 \geq n/2+1$. We do this in two cases: $r \leq t$ and $r \geq t+1$. 

\begin{lemma}\label{le1} If $r \leq t$, and $H$ is an $r$-uniform hypergraph with minimum degree $\delta(H) \geq {t \choose r-1} + 1$,  then $H$ contains a cycle of length at least $t+2= \lfloor (n+3)/2 \rfloor.$
\end{lemma}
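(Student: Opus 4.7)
The plan is to adapt Dirac's original longest-path argument to the Berge setting, along the same lines as the proofs of Theorem~\ref{main3}(a,b) already given. Suppose for contradiction that $H$ contains no Berge cycle of length $\ge t+2$, and fix a longest Berge path $P=v_1,e_1,v_2,\ldots,e_{\ell-1},v_\ell$.

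I would begin by constraining the edges incident to the endpoint $v_1$. Since $P$ is longest, every off-path edge $e\ni v_1$ is contained in $V(P)$; otherwise an outside vertex of $e$ would extend $P$. Moreover, if such an off-path edge $e$ contains a vertex $v_m$ with $m\ge t+2$, then $v_1,e_1,v_2,\ldots,v_m,e,v_1$ is a Berge cycle of length $m\ge t+2$, contradicting our assumption. Hence every off-path edge at $v_1$ lies in $\{v_1,\ldots,v_{t+1}\}$. Likewise, if $v_1\in e_i$ for some on-path index $i\ge t+2$, the cycle $v_1,e_1,\ldots,v_i,e_i,v_1$ has length $i\ge t+2$, again forbidden; so the on-path edges at $v_1$ all have index $i\le t+1$.

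Next I would count $d_H(v_1)$. The off-path edges at $v_1$ form a subfamily of the $\binom{t}{r-1}$ subsets of $\{v_1,\ldots,v_{t+1}\}$ of size $r$ containing $v_1$, while the on-path edges at $v_1$ form a subfamily of $\{e_1,\ldots,e_{t+1}\}$. A careful bookkeeping (deducting on-path edges that happen to be subsets of $\{v_1,\ldots,v_{t+1}\}$ from the off-path count) gives
\[
d_H(v_1)\;\le\;\binom{t}{r-1}+\bigl|\{\,i\le t+1:\ v_1\in e_i,\ e_i\not\subseteq\{v_1,\ldots,v_{t+1}\}\,\}\bigr|.
\]
Since $d_H(v_1)\ge\binom{t}{r-1}+1$, there must exist some $i^*\le t+1$ with $v_1\in e_{i^*}$ and a vertex $u\in e_{i^*}\setminus\{v_1,\ldots,v_{t+1}\}$.

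I then split on $u$. If $u\notin V(P)$, the edge $e_{i^*}$ can be used to realize $u$ as the endpoint of a new longest Berge path, and applying the same analysis at $u$ (whose constrained neighborhood still lies inside $\{v_1,\ldots,v_{t+1}\}$) contradicts $u\notin\{v_1,\ldots,v_{t+1}\}$ via the degree bound. If $u=v_m$ for some $m\ge t+2$, then $e_{i^*}\supseteq\{v_1,v_{i^*},v_{i^*+1},v_m\}$, and I would use $e_{i^*}$ as a chord: travel from $v_1$ to $v_{i^*}$ along $P$, jump through $e_{i^*}$ to $v_m$, return backward along $P$ to $v_{i^*+1}$, and close to $v_1$ through an auxiliary edge, producing a Berge cycle of length $m\ge t+2$.

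The main obstacle is the last step, i.e.\ producing the edge needed to close the chord-based cycle from $v_{i^*+1}$ back to $v_1$ without reusing any edge already consumed by the chord construction. A single application of the degree bound at $v_1$ does not guarantee this closing edge (indeed, the bound is off by roughly $t$ in the naive count), so I expect the argument to iterate the rotation trick: each rotated longest path supplies new endpoints whose edges must also lie in $\{v_1,\ldots,v_{t+1}\}$, and by amalgamating these constraints with $\delta(H)\ge\binom{t}{r-1}+1$ across a suitable family of endpoints, one obtains enough off-path edges at $v_1$ incident to $v_{i^*+1}$ to force the desired closing edge and hence a Berge cycle of length at least $t+2$.
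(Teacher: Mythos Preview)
Your setup is correct and matches the paper's: restricting all off-path edges at $v_1$ to $\{v_1,\ldots,v_{t+1}\}$ and all on-path indices to $i\le t+1$, then deducing $d_H(v_1)\le\binom{t}{r-1}+|E_2|$ where $E_2$ is the set of path-edges $e_i$ ($i\le t+1$) containing $v_1$ but not contained in $\{v_1,\ldots,v_{t+1}\}$, and concluding $E_2\neq\emptyset$. The gap is in everything that follows.

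In your case $u\notin V(P)$: you cannot in general realize $u$ as the endpoint of a longest path using only $e_{i^*}$. The edge $e_{i^*}$ is already serving as the $i^*$-th edge of $P$; to prepend $u$ you would need a replacement edge joining $v_{i^*}$ to $v_{i^*+1}$ (or $v_1$ to $v_{i^*+1}$), which you have not produced. The paper does not attempt this route at all.

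In your case $u=v_m$ with $m\ge t+2$: as you admit, the closing edge from $v_{i^*+1}$ back to $v_1$ is the entire difficulty, and ``iterate the rotation trick'' is not a proof. The naive count is off by roughly $t$, and no amalgamation over rotated endpoints is sketched that would close this.

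The paper's mechanism is different in kind. Among all longest paths it selects one that (a) maximizes the number of edges $e_i$ ($i\le t$) contained in $V(q)=\{v_1,\ldots,v_{t+1}\}$, and (b) subject to (a), minimizes the number of edges of $E'(q)$ containing $v_1$. Criterion (b) together with rotations yields $|E_1\cup E_2|\le\max\{t-1,r\}$. More importantly, taking the \emph{smallest} index $j$ with $e_j\in E_2$, criterion (a) forbids any off-path edge $g\subset V(q)$ containing $\{v_1,v_{j+1}\}$: such a $g$ would allow rerouting so that $e_j$ is replaced by an edge inside $V(q)$, improving (a). Hence all $\binom{t-1}{r-2}$ $r$-subsets of $V(q)$ through $\{v_1,v_{j+1}\}$ lie in $R\cup E_1$, where $R$ is the set of non-edge $r$-tuples through $v_1$ inside $V(q)$. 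The refined identity $d_H(v_1)=\binom{t}{r-1}+|E_2|-|R|$ then yields a contradiction case by case (with a separate short argument when $j=t+1$). The long cycle is never built directly from a chord; it is a pure degree-counting contradiction driven by the two tie-breaking rules. Your proposal lacks both the tie-breakers and the non-edge set $R$, and without them the rotation approach does not close.
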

\begin{proof} Suppose $H$ has no cycles of length at least $t+2$.
 Let $Q$ be a longest path in $H$, say $Q = v_1, e_1, v_2, \ldots, e_{s-1} ,v_{s}$. 
 Let $q=\min\{t+1,s\}$,
  $V(q)=\{v_1,\ldots,v_{q}\}$ and let
 $Q(q)$ denote the subpath of $Q$
with vertex set $V(q)$ and edge set $E(q)=\{e_1, \ldots, e_{q-1}\}$. 
 Among such paths $Q$, choose one in which 
 \begin{equation}\label{me}
\parbox{14cm}{\em (a) the most edges in $E(q)$ are contained in $V(q)$, and \\
(b) modulo (a), the fewest edges in $E(q)\cup \{e_{q}\}$ contain $v_1$.
 }
 \end{equation}
 Let ${H_1}=H-E(Q)$. Since $H$ has no cycles of length at least $t+2$ and $Q$ is a longest path, all neighbors of $v_1$ in ${H_1}$ are in $V(q)$. Thus $d_{{H_1}}(v_1)\leq {q-1\choose r-1}$. By the same reason, the edges $e_i$ for $q+1\leq i \leq s-1$ must not contain $v_1$.
 So
 \begin{equation}\label{H1}
  d_H(v_1)\leq d_{{H_1}}(v_1)+\min\{q,s-1\} \leq {q-1 \choose r-1} + \min\{q,s-1\}.
    \end{equation}
 If $q=s \leq t$, then since $3\leq r\leq t-1$, this is at most ${t-1\choose r-1}+t-1\leq {t\choose r-1}$,
 contradicting the minimum degree condition. Hence $s\geq t+1$ and $q=t+1$. Let $E'(q)=E(q)\cup \{e_q\}$ if $e_q$ exists, and $E'(q) = E(q)$ otherwise.
 
 Let $E_0$ be the set of edges in $E'(q)$ not containing $v_1$, $E_1$ be the set of edges in $E'(q)$ containing $v_1$ and  contained in $V(q)$, and $E_2=E'(q)-E_0-E_1$. In particular, $e_q\in E_0\cup E_2$.
 
 Let us show that  \begin{equation}\label{pb} |E_1\cup E_2|\leq \max\{t-1, r\}.
\end{equation}
 
 Indeed, suppose $|E_1\cup E_2|=m$. For every $2\leq i\leq t+1$ such that $v_1 \in e_i$, we can consider the path $Q_i$ from $v_i$ to $v_s$ obtained from $Q$ by replacing the subpath
 $v_1,e_1, v_2,\ldots, e_i, v_{i+1}$ with the subpath $v_i, e_{i-1}, v_{i-1},\ldots,e_1, v_1, e_i, v_{i+1}$. This path uses the same edges as $Q$, so by Rule (a) in \eqref{me} it is also a valid choice for a best path, and if $v_i$ is in fewer than $m$ edges in $E'(q)$, then $Q_i$ is better by Rule~(b). Hence each $v_i$ such that $e_i\in E_1\cup E_2$ is in at least $m$ edges  in $E'(q)$. Thus,
 $m^2\leq r(t+1)$. If $r \leq t-1$, then $m^2\leq t^2 - 1$, so $m \leq t-1$. Otherwise if $r = t$, we get $m \leq t$. This proves \eqref{pb}.
 
 \medskip
 Let $R=R(v_1)$ be the set of $r$-tuples contained in $V(q)$ that contain $v_1$ and are not edges of $H$.
 Since the only edges containing $v_1$ and not contained in $V(q)$ are those in $E_2$,
  \begin{equation}\label{dv1}
d_H(v_1)= {t\choose r-1}+|E_2|-|R|.
 \end{equation}
So, if $E_2=\emptyset$, 
 then  $d_H(v_1)\leq {t\choose r-1}$, a contradiction.
 Hence for some $j\in [t+1]$,  $ e_j\in E_2$, i.e., $x \in e_j$ but $e_j \nsubseteq V(q)$. Choose the smallest such $j$.
 
 \medskip
 {\bf Case 1:} $j=1$. If there is an edge $g\subset V(q)$ in $E(H)-E'(q)$ containing $\{v_1,v_2\}$ (recall that $g \notin \{e_{q+1}, \ldots, e_{s-1}\}$), then by replacing $e_1$ with $g$ we get a contradiction to \eqref{me}(a).  Thus each of the ${t-1\choose r-2}$ $r$-tuples $g\subset V(q)$ containing $\{v_1,v_2\}$ is in $R\cup E_1$.

 \medskip
 {\bf Case 1.1:} $r=3$. For any edge $e_i$ containing $v_1$, $\{v_i, v_{i+1},v_1\}\subseteq e_i$. Then only $e_2$ may contain $\{v_1,v_2\}$ and be contained in $V(q)$. Moreover for $2\leq i\leq t$,  if $x \in e_i$, then $e_i=\{v_1, v_i, v_{i+1}\} \subseteq V(q)$, so $|E_2|\leq 1$.
 Hence
  \[d_H(v_1)\leq {t \choose 2} - |R| + |\{e_2, e_q\}| \leq {t\choose 2}-{t-1\choose 1}+2\leq {t\choose r-1},\] a contradiction.

 \medskip
 {\bf Case 1.2:} $r\geq 4$. Set $E_1' =\{e_i \in E_1: v_2 \in e_i\}$. 
    It follows that
 $$d_H(v_1)\leq |E_2|+{t\choose r-1}-\left({t-1\choose r-2}-|E_1'|\right)=|E_1'\cup E_2| +{t\choose r-1}-{t-1\choose r-2}.
 $$
 In order to have $d_H(v_1)\geq 1+{t\choose r-1}$, we need ${t-1\choose r-2}\leq |E_1'\cup E_2|-1$. 
 
If either $r \leq t-1$ (so $|E_1 \cup E_2|\leq t-1$ by~\eqref{pb}) or $r=t$ and $|E_1' \cup E_2| \leq t-1$, then since $r-2\geq 2$ and $(t-1)-(r-2)\geq 2$, we have ${t-1\choose r-2}\geq {t-1\choose 2}$. We need $\frac{(t-1)(t-2)}{2}\leq t-2$, which does not hold for integer  $t\geq 4$.

Therefore we may assume that $r = t$ and by~\eqref{pb}, $|E_1' \cup E_2| = |E_1 \cup E_2| = t$ by~\eqref{dv1}. Then every $e_i \in E_1$ contains $v_2$. Suppose first that $|E_1| \geq 1$, and let $e_i \in E_1$. If $f:=V(q) - \{v_2\}$ is an edge of $H$, then by assumption $f \notin E(Q)$. We may replace in $Q$ $e_i$ with $f$ and $e_2$ with $e_i$ to obtain a path that is better than $Q$ in criterion (a). It follows that $f \in R$ and 
\[d_H(v_1) \leq {t \choose r-1}  - ({t-1 \choose r-2} +|\{f\}|) + |E_1 \cup E_2| = {t \choose r-1} -(t-1+1) +t = {t \choose r-1},\]
a contradiction. So we may assume that $|E_1| = 0$, i.e., all edges containing $v_1$ in $E'(q)$ contain a vertex outside of $V(q)$. If there exists any edge $e \subseteq V(q)$ in $H$ such that $v_1 \in V(q)$, then  some $\{v_i, v_{i+1}\} \subseteq e$ since $|e| = t$. Then we may replace the edge $e_i$ with the edge $e$ in $Q$ to obtain a better path by criterion (a). Therefore $|R| = {t \choose r-1}$. By~\eqref{dv1}, $d_H(v) \leq |E_2| =t$, a contradiction.

 \medskip
 {\bf Case 2:} $2\leq j\leq t$.  In order for $e_j$ to contain $v_1,v_j,v_{j+1}$ and a vertex outside of $V(q)$, we need $r\geq 4$. Similarly to Case 1, if
 there is an edge $g\subset V(q)$ in $E(H)-E'(q)$  containing $\{v_1,v_{j+1}\}$, then the path 
 $$v_j,e_{j-1}, v_{j-1},\ldots,e_1, v_1, g,v_{j+1}, e_{j+1},v_{j+2},\ldots,e_{s-1}, v_s$$
 contradicts \eqref{me}(a). Hence
each of the ${t-1\choose r-2}$ $r$-tuples $g\subset V(q)$ containing $\{v_1,v_{j+1}\}$ is in $R\cup E_1$.

So, now we repeat the argument of Case 1.2 word by word with $v_j$ in place of $v_2$.

 \medskip
 {\bf Case 3:} $ j= t+1$. This means all edges containing $v_1$ apart from $e_{t+1}$ are contained in $V(q)$. Then $d_H(x) \leq {t \choose r-1}  -|R| + 1$, so we may assume $|R| = 0$. In other words, 
  \begin{equation}\label{t+1}
\mbox{\em 
 all $r$-tuples contained in $V(q)$ and containing $v_1$ are edges of $H$. 
 }
 \end{equation}
 
 Since $r\leq t$, there is $i\leq t$ such that $v_i\notin e_{t+1}$. By \eqref{t+1}, we can construct a path on the vertices
 $v_i, v_{i-1},\ldots, v_1, v_{i+1}, v_{i+2},\ldots, v_{t+1}$ all edges of which are contained in $V(q)$. So, we will have no edges containing $v_i$ and not contained in $V(q)$, a contradiction.
\end{proof}

\bigskip

Next, we prove the result for $r \geq t+1$, assuming that the circumference of $H$ is $k-1$.

\begin{lemma}\label{le2}
If $k \geq r \geq t+1$, $\delta(H) \geq \lceil k/2 \rceil$, and $H$ contains a cycle of length $k-1$, then $H$ contains a cycle of length at least $\min\{k, t+2\}$. 
\end{lemma}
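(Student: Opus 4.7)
I plan to argue by contradiction using the best-pair framework of Section~3. Assume $H$ has no Berge cycle of length $\min\{k,t+2\}$; then the longest cycle $C = v_1, e_1, v_2, \ldots, e_{k-1}, v_1$ has length exactly $k-1$, which forces $k \leq t+2$. Since $n \geq k > k-1$, there is $u \in V(H) \setminus V(C)$; form a best pair $(C, P)$ with $|E(C)| = k-1$ maximal and let $u_1$ be an endpoint of $P$, so $d_H(u_1) \geq \lceil k/2 \rceil$. Note that in the context of Theorem~\ref{main41} (which this lemma feeds into), $|E(H)| \geq k$ is available, so $E(H) \setminus E(C)$ is nonempty.

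The crux exploits that $r \geq t+1$ is large relative to $|V(C)| = k-1 \leq t+1$. I would first handle the case $|V(P)| = 1$, where $P = u_1$. Since $P$ cannot be extended, no non-cycle edge containing $u_1$ has another outside vertex, so every $e \in E_{u_1}' := \{e \in E(H) \setminus E(C) : u_1 \in e\}$ satisfies $|e \cap V(C)| = r - 1 \geq t$. Claim~\ref{noconsecutive} forces $e \cap V(C)$ to be independent in $C$, so $r-1 \leq \lfloor (k-1)/2 \rfloor$, contradicting $r-1 \geq t > \lfloor (k-1)/2 \rfloor$ (valid for $t \geq 2$ and $k \leq t+2$). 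Hence $E_{u_1}' = \emptyset$, and the same conclusion applies to every outside vertex, so all non-cycle edges lie inside $V(C)$. Since $|E(H)| \geq k$, some non-cycle edge $f \subseteq V(C)$ exists, and $|f| = r > \lfloor (k-1)/2\rfloor$ forces $f$ to contain a consecutive pair $v_i, v_{i+1}$ of $C$. Substituting $f$ for $e_i$ in $C$ frees $e_i$ as a ``spare'' cycle edge; meanwhile the pigeonhole bound $(n-k+1)\lceil k/2\rceil \leq \sum_{e \in E(C)} |e \cap (V(H) \setminus V(C))| \leq (k-1)(r-2)$ (applied with $E_u' = \emptyset$ for every outside $u$) combined with Claim~\ref{notneighbor} ensures the existence of an outside vertex $u \in e_i$ that can be inserted via $e_i$ and another cycle edge through $u$, producing a Berge cycle of length $k$ and a contradiction.

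If $|V(P)| \geq 2$, I would apply Claims~\ref{noconsecutive},~\ref{notneighbor},~\ref{distance} and Corollary~\ref{cycleneighbors} to both endpoints $u_1$ and $u_\ell$ of $P$. Claim~\ref{distance} forces the cycle edges through $u_1$ and $u_\ell$ to coincide or be ``far apart'' on $C$, bounding $|B_{u_1}|+|B_{u_\ell}|$; combined with the same size analysis as above (using $r \geq t+1$ to restrict non-cycle edges through the endpoints to cover most of $V(C)$, and Claim~\ref{noconsecutive} to forbid such a covering), this yields $d_H(u_1) + d_H(u_\ell) < 2\lceil k/2\rceil$, contradicting the minimum-degree hypothesis.

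The main obstacle is the final insertion step in the case $|V(P)| = 1$: after establishing $E_{u_1}' = \emptyset$ for all outside vertices and producing the non-cycle edge $f \subseteq V(C)$ containing a consecutive pair, one must carefully assemble a $k$-cycle by combining $f$ with the existence of a suitable outside vertex embedded in some $e_i$, respecting Berge edge-distinctness. The narrow parameter regime ($r, k \in \{t+1, t+2\}$, $n \in \{2t+1, 2t+2\}$) means the global cycle-edge counting is only marginally decisive, so the argument relies on pinning down exactly which cycle edges house outside vertices, using Claim~\ref{notneighbor} to prevent consecutive vertex pairs from being ``double covered''.
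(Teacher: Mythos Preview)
Your plan diverges from the paper's proof, which does \emph{not} invoke the best-pair machinery of Section~3 here at all. Instead the paper argues via longest \emph{paths}: it first shows (Lemmas~\ref{notk} and~\ref{notk+1}) that any longest path in an edge-maximal counterexample has at least $k+2$ vertices, and then runs a delicate index analysis on such a path (tracking $J(1),J(\ell-1),i(1),i(\ell),I(1),I(\ell)$ and the interaction of $e_1,e_2,e_{\ell-1}$) to force a cycle of length at least $\min\{k,t+2\}$. The best-pair framework only enters later, \emph{after} Lemma~\ref{le2} has secured $s\ge t+2$.

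Your outline has two genuine gaps. First, the step ``the same conclusion applies to every outside vertex'' is not justified: Claim~\ref{noconsecutive} is proved only for the endpoint $u_1$ of a \emph{best} pair, and the last line of its proof appeals to criterion~(iii), which need not hold for an arbitrary outside vertex $u$. Concretely, if $e\in E(H')$ contains $u$ together with consecutive $v_i,v_{i+1}$ and $u\notin e_i$, then swapping $e$ for $e_i$ produces another longest cycle but no contradiction; you have not explained how to close this. Second, and more seriously, your treatment of $|V(P)|\ge 2$ is a gesture rather than an argument. Once $\ell\ge 2$, an $H'$-edge through $u_1$ need only place $r-\ell$ vertices on $C$, and since $\ell$ can be as large as $n-(k-1)\ge t$, the bound from Claim~\ref{noconsecutive} can become vacuous; nor do Claims~\ref{distance} and Corollary~\ref{cycleneighbors} combine in any evident way to give $d_H(u_1)+d_H(u_\ell)<2\lceil k/2\rceil$ across this whole range. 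Without a concrete mechanism handling all values of $\ell$ (and the tight cases $t=2,3$ you allude to), the plan does not close; the paper's longest-path route avoids exactly these difficulties.
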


For a path $P= v_1, e_1, v_2, \ldots, e_{\ell-1}, v_\ell$ and $i \in \{1,\ell\}$, let $V_i = V_i(P) = \{v_j \in V(P): v_i \in e_j\}$, and set $V_\ell^+ = V_\ell^+(P) = \{v_{j+1}: v_\ell \in e_j\}$. 

For any $v_i\in V_1$, set \[P_i^1 = v_i, e_{i-1}, \ldots, e_1, v_1, e_i, v_{i+1}, \ldots, e_{\ell-1}, v_\ell,\] and for any $v_j \in V_\ell^+$, set \[P_j^\ell = v_j, e_j, \ldots, e_{\ell-1}, v_\ell, e_j, v_{j-1}, \ldots, e_1, v_1.\]

\begin{lemma}\label{outsideedge}Let $r \geq t+1$, and let $P=v_1, e_1, v_2, \ldots, e_{\ell-1}, v_\ell$ be a longest path in an $r$-uniform hypergraph $H$ with no cycle of length $k$ or greater. Suppose there exists a vertex $v_i \in V_1 \cup V_\ell^+$ and an edge $e \notin E(P)$ such that $v_i \in e$. Then $H$ contains a cycle of length at least $r+1 \geq t+2$.\end{lemma}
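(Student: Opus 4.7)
The plan is to first reduce Case~2 (where $v_i \in V_\ell^+$) to Case~1 (where $v_i \in V_1$) by reversing $P$: the relabeling $v_j \mapsto v_{\ell - j + 1}$ sends $V_\ell^+$ onto the $V_1$-set of the reversed path, and the conclusion is invariant. So assume $v_1 \in e_i$. If $i \geq r+1$, I would take the cycle $v_1, e_1, v_2, e_2, \ldots, v_i, e_i, v_1$ (closing via $v_1 \in e_i$), which has length $i \geq r+1$. Hence assume $i \leq r$. The rotation $P_i^1$ is still a longest path, now with $v_i$ as an endpoint, so the usual ``no extension'' argument (if some vertex of $e$ lay outside $V(P_i^1)=V(P)$, then $P_i^1$ could be lengthened along $e$) yields $e \subseteq V(P)$. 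Enumerate the vertices of $e$ on $P$ by their indices: $e = \{v_{a_1}, \ldots, v_{a_r}\}$ with $a_1 < \cdots < a_r$. The direct cycle $v_{a_1}, e_{a_1}, v_{a_1+1}, \ldots, v_{a_r}, e, v_{a_1}$ has length $a_r - a_1 + 1$, and we are done if $a_r - a_1 \geq r$.

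The only remaining obstruction is $a_r - a_1 = r-1$, in which case $e = \{v_{a_1}, v_{a_1+1}, \ldots, v_{a_1+r-1}\}$ is a block of $r$ consecutive path-vertices. If $a_1 \geq 2$, I would use the ``detour'' cycle
\[
v_1, \, e_i, \, v_{i+1}, \, e_{i+1}, \, \ldots, \, e_{a_r-1}, \, v_{a_r}, \, e, \, v_i, \, e_{i-1}, \, v_{i-1}, \, \ldots, \, e_1, \, v_1,
\]
which leaves $v_1$ along $e_i$ (legal because $v_1, v_{i+1} \in e_i$), runs forward along $P$ to $v_{a_r}$, jumps via $e$ back to $v_i$, and returns to $v_1$ along the reversed initial segment of $P$; its vertex set is $\{v_1, v_2, \ldots, v_{a_r}\}$, its edge set is $\{e_1, \ldots, e_{a_r-1}, e\}$, and its length is $a_r = a_1 + r - 1 \geq r+1$.

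The main obstacle is the final subcase $a_1 = 1$, i.e.\ $e = \{v_1, v_2, \ldots, v_r\}$: here the detour collapses and the direct cycle through $e$ has length only $r$. My plan is to perform a swap: since $\{v_1, v_2\} \subseteq e$, replace $e_1$ with $e$ in $P$ to obtain a new longest path $P' = v_1, e, v_2, e_2, \ldots, v_\ell$, in which $e_1 \notin E(P')$. Applying the longest-path property to $P'$ at the endpoint $v_1$ (using $v_1 \in e_1 \notin E(P')$) forces $e_1 \subseteq V(P') = V(P)$. Since $|e_1| = r = |e|$ and $e_1 \neq e$ (because $e \notin E(P)$), $e_1$ cannot be contained in $\{v_1, \ldots, v_r\}$, so it must contain some $v_m$ with $m \geq r+1$; then the cycle
\[
v_1, \, e, \, v_2, \, e_2, \, v_3, \, e_3, \, \ldots, \, e_{m-1}, \, v_m, \, e_1, \, v_1
\]
has length $m \geq r+1$. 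The hard step is precisely this one: without the extra ``outside'' vertex $v_m$ produced by the swap-plus-longest-path trick, the cycle through $e$ saturates at $r$, and the bound $r+1$ cannot be reached by combining $e$ with the path $P$ alone.
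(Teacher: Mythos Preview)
Your proof is correct and rests on the same two ingredients as the paper's: (i) rotation to a longest path with the distinguished vertex as an endpoint, forcing $e\subseteq V(P)$, and (ii) the swap $e\leftrightarrow e_1$ to escape the degenerate case $e=\{v_1,\dots,v_r\}$. The paper, however, organizes this more economically: it first proves the statement outright for the endpoint $v_1$ (take the largest index $q$ with $v_q\in e$; if $q=r$ swap and repeat), and only then observes that for any $v_i\in V_1$ (resp.\ $v_j\in V_\ell^+$) one may pass to $P_i^1$ (resp.\ $P_j^\ell$) and invoke the endpoint case verbatim. This single reduction replaces both your separate treatment of $i\ge r+1$ (where you close via $e_i$, not $e$) and your detour cycle for $a_1\ge 2$; in the paper's indexing along $P_i^1$ those subcases never arise. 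Your reversal $P\mapsto P^R$ to reduce $V_\ell^+$ to $V_1$ is equivalent to the paper's use of $P_j^\ell$.
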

\begin{proof}
If there exists an edge $e \notin E(P)$ such that $v_1 \in e$, then by the maximality of $P$, $e \subseteq \{v_1, \ldots, v_\ell\}$. It follows that there exists some $v_q \in e$ with $q \geq r$, and hence $v_1, e_1, \ldots, e_{q-1}, v_q, e$ is a cycle of length $q$. If $q \geq r+1$, then we're done. So we may assume $q =r$ and $e=\{v_1, \ldots, v_r\}$. Swapping $e_1$ with $e$ in $P$, we obtain that since $v_1 \in e_1$, the same argument implies $e_1 =  \{v_1, \ldots, v_r\} = e$, a contradiction.

For $v_i \in V_1$ or $v_j \in V_\ell^+$, we apply the same argument for the longest paths $P_i^1$ or $P_j^{\ell}$ (note $E(P_i^1) = E(P_j^\ell) = E(P)$) and obtain our result.
\end{proof}
%
%
%
%
%
%

Now let $H$ be an-edge maximal counterexample to Lemma~\ref{le2}. That is, $H$ contains a $k-1$ cycle, no cycle of length at least $\min\{k, t+2\}$, and adding any additional edge to $H$ creates such a cycle. 

We break up the proof into 3 parts: when the longest path of $H$ contains $k$ vertices, $k+1$ verties, or at least $k+2$ vertices.

\begin{lemma}\label{notk}The longest path of $H$ contains at least $k+1$ vertices.
\end{lemma}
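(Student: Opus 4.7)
The plan is to argue by contradiction, assuming the longest path $P = v_1, e_1, v_2, \ldots, e_{\ell-1}, v_\ell$ of $H$ has $\ell \leq k$ vertices. Breaking an edge of the hypothesized $(k-1)$-cycle yields a path on $k-1$ vertices, so $\ell \geq k-1$, and we handle the two subcases $\ell = k-1$ and $\ell = k$ separately.

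The first step extracts structural information from Lemma~\ref{outsideedge}. Since $r+1 \geq t+2$, the lemma's conclusion of a cycle of length $\geq r+1$ contradicts our standing assumption that $H$ has no cycle of length at least $\min\{k, t+2\}$; hence every edge of $H$ through $v_1$ or $v_\ell$ lies in $E(P)$. More generally, for each $v_i \in V_1$ the rotated path $P_i^1$ is itself a longest path with $v_i$ as an endpoint and $E(P_i^1) = E(P)$, so applying Lemma~\ref{outsideedge} to $P_i^1$ (and analogously to $P_j^\ell$ for $v_j \in V_\ell^+$) shows that every edge through any vertex of $V_1 \cup V_\ell^+$ lies in $E(P)$. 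Since each such vertex then has degree at most $\ell - 1$ in $H$, and $\delta(H) \geq \lceil k/2 \rceil$, we obtain $|V_1|, |V_\ell^+| \geq \lceil k/2 \rceil$ and $\ell \geq \lceil k/2 \rceil + 1$.

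In the subcase $\ell = k-1$, the inequality $|V_1| + |V_\ell^+| \geq k > \ell \geq |V_1 \cup V_\ell^+|$ forces $V_1 \cap V_\ell^+ \neq \emptyset$; a Dirac-type rotation at any $v_j$ in this intersection yields a cycle on $V(P) \setminus \{v_j\}$ of length $k-2$. Since $n \geq k > |V(P)|$, some vertex $u$ lies outside $V(P)$, and using $r \geq t+1$ together with $\delta(H) \geq \lceil k/2 \rceil$ (which prohibits all of $u$'s many edges from being confined to the small set $V(H) \setminus V(P)$) we find an edge from $u$ into $V(P)$ and splice to produce a path on at least $k$ vertices, contradicting $\ell = k-1$. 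In the subcase $\ell = k$ with $V_1 \cap V_\ell^+ \neq \emptyset$, Dirac's rotation at such a $v_j$ yields a new longest path $P^*$ whose endpoints $v_{j-1}, v_j$ also have all their edges in $E(P)$; iterating over all such rotations grows the set $W \subseteq V(P)$ of vertices with this property, and the edge-maximality of the counterexample $H$ (which upon adding any absent edge of the form $\{v_1, v_\ell\} \cup S$ forces a cycle through $P$ of length $\ell$) combined with a degree-sum count against $(\ell-1)r$ eventually contradicts the bound $|W| \leq \ell$.

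The main obstacle is the remaining case $\ell = k$ with $V_1 \cap V_\ell^+ = \emptyset$. Here $V_1$ and $V_\ell^+$ partition $\{v_1, \ldots, v_k\}$ and Dirac's swap produces no immediate new cycle, so one must exploit $r \geq t + 1$: every edge of $E(P)$ has $r$ vertices, so for most indices $i$ the edge $e_i$ contains a vertex outside $V(P)$. A careful rotation through such exterior vertices either extends $P$ by one vertex directly, or triggers a new rotation that moves us into the $V_1 \cap V_\ell^+ \neq \emptyset$ regime, at which point the preceding argument closes the case. Tracking which edges and endpoints remain ``available'' under successive rotations, and in particular ensuring that each rotation preserves being a longest path, is the delicate bookkeeping required.
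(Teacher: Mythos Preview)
Your approach diverges substantially from the paper's, and the sketch has real gaps that are not just missing details.

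The paper's argument is direct and short: it starts from the given $(k-1)$-cycle $C=v_1,e_1,\ldots,v_{k-1},e_{k-1},v_1$ rather than from a longest path. Since $|V(C)|=k-1\le t+1$ and $r\ge t+1$, at most one edge of $H$ can be contained in $V(C)$, so almost every edge has a vertex outside $V(C)$. If some $v\in V(C)$ lies in an edge $e\notin E(C)$, pick $u\in e\setminus V(C)$ and $u'\in e_1\setminus V(C)$; then either $u=u'$ gives a $k$-cycle, or one obtains the $(k+1)$-vertex path $u',e_1,v_2,\ldots,v_1,e,u$. If every edge meeting $V(C)$ is already in $E(C)$, then (using $|E(H)|\ge k$) some edge $f$ is disjoint from $V(C)$; since $|\bigcup e_i|\ge r+1$ and $n<2r+1$, some $e_i$ meets $f$, and one again writes down a $(k+1)$-vertex path explicitly. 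No rotations, no case split on $\ell$.

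Your route, by contrast, has two steps that do not go through as written. First, in the $\ell=k-1$ subcase you obtain a cycle on $V(P)\setminus\{v_j\}$ of length $k-2$ and then claim that splicing in a single exterior vertex $u$ yields a path on $k$ vertices. But opening a $(k-2)$-cycle and attaching $u$ gives only $k-1$ vertices; to reach $k$ you must also recover $v_j$, and nothing in your argument produces an edge joining the cycle path to $v_j$ in the right place. Second, in the $\ell=k$, $V_1\cap V_\ell^+\neq\emptyset$ subcase, your ``degree-sum count against $(\ell-1)r$'' cannot force $|W|>\ell$: even if every vertex of $V(P)$ lands in $W$, the total degree is $k\lceil k/2\rceil$, while the $k-1$ edges of $P$ contribute up to $(k-1)r\ge(k-1)^2$ incidences (recall $r\ge t+1\ge k-1$ here), which comfortably absorbs that sum. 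So no contradiction arises from the count you describe, and it is not clear the iterated rotations ever exhaust $V(P)$. The case you flag as the ``main obstacle'' ($V_1\cap V_\ell^+=\emptyset$) is in fact the easier one, since there $V_1\cup V_\ell^+=V(P)$ immediately; the real missing idea is elsewhere. I would recommend abandoning the longest-path framework for this lemma and arguing from the cycle as the paper does.
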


\begin{proof}
Let $C=v_1,e_1,\ldots,v_{k-1},e_{k-1},v_1$ be a cycle of length at least $k-1$. We may assume that $C$ is a longest cycle and $|C| =k-1 \leq n/2$. Then as $r \geq n/2$ at most one edge of $H$ is contained in $V(C)$ (actually, equals $V(C)$). We may assume that if this happens, then such an edge is one of the $e_i$.

\medskip
{\bf Case 1:} For some $v_1\in V(C)$, some edge $e\in E(H)-E(C)$ contains $v_1$. By our assumption, there is a vertex $u\in e-V(C)$.
Also at most one of $e_1$ and $e_{k-1}$ is contained in $V(C)$, so we may assume there is $u'\in e_1- V(C)$. If $u'=u$ then we have cycle
$C'=v_2,e_2,\ldots,e_{k-1},v_1,e,u,e_1,v_2$ of length $k$, otherwise we have path
$P=u',e_1,v_2,e_2,\ldots,e_{k-1},v_1,e,u$, as claimed.

\medskip
{\bf Case 2:} All edges of $H$ incident to $V(C)$ are in $E(C)$. Since $H$ has at least $k$ edges, there is an edge $f$ fully disjoint from $V(C)$.
Since $|\bigcup_{i=1}^{k-1}e_i|\geq r+1$ and $n<(r+1)+r$,
 there is some $e_i$, say $i=1$ that contains a vertex $u_1\in f$. Let   $u_2$ be another vertex of $f$.  Then we have path 
$P'=v_2,e_2,\ldots,e_{k-1},v_1,e_1,u_1,f,u_2$, as claimed.
\end{proof}

\begin{claim}\label{intersect}
Let $P=v_1, e_1, \ldots, e_{\ell-1}, v_\ell$ be a longest path in $H$ with at least $k+1$ vertices. For every $e \notin E(P)$, $e \cap (V_1 \cup V_{\ell}^+) = \emptyset$.
\end{claim}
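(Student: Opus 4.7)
The plan is to deduce Claim~\ref{intersect} directly from Lemma~\ref{outsideedge} after a preliminary observation that the counterexample assumption forces $k \leq t+2$. I would argue by contradiction: suppose some edge $e \notin E(P)$ contains a vertex $v_i \in V_1 \cup V_\ell^+$, and then produce a cycle too long to exist in $H$.

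First I would pin down the range of $k$. Since $H$ is an edge-maximal counterexample to Lemma~\ref{le2}, $H$ contains a cycle of length $k-1$ but no cycle of length $\min\{k,t+2\}$ or longer. If $k \geq t+3$, then $k-1 \geq t+2 = \min\{k,t+2\}$, and the assumed cycle of length $k-1$ already violates the counterexample condition. So $k \leq t+2$, whence $\min\{k,t+2\}=k$ and $H$ contains no cycle of length $\geq k$. This is precisely the standing hypothesis of Lemma~\ref{outsideedge}.

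With this in hand the claim is immediate. Under the contradictory assumption we have $v_i \in V_1 \cup V_\ell^+$ and an edge $e \notin E(P)$ with $v_i \in e$; since $P$ is a longest path in $H$, $r \geq t+1$, and $H$ has no cycle of length $\geq k$, all hypotheses of Lemma~\ref{outsideedge} hold. Applying it produces a cycle of length at least $r+1 \geq t+2 \geq k$, contradicting the absence of cycles of length $\geq k$ in $H$. Therefore no such edge $e$ can exist, and every $e \notin E(P)$ is disjoint from $V_1 \cup V_\ell^+$.

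The only mildly delicate point is the reduction to $k \leq t+2$, which is what makes the bound $r+1 \geq t+2$ delivered by Lemma~\ref{outsideedge} genuinely contradict the counterexample assumption. Beyond that, I do not foresee any real obstacle: Lemma~\ref{outsideedge} does the geometric work of rerouting the path into a long cycle, and Claim~\ref{intersect} simply repackages its conclusion into a form convenient for the subsequent analysis of pairs $(C,P)$.
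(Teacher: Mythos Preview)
Your proposal is correct and takes essentially the same approach as the paper: both arguments simply invoke Lemma~\ref{outsideedge} to obtain a cycle of length at least $r+1\ge t+2$, which is incompatible with the counterexample assumption. Your write-up is in fact more explicit than the paper's, since you spell out the reduction $k\le t+2$ (hence $\min\{k,t+2\}=k$) that makes the output of Lemma~\ref{outsideedge} an actual contradiction; the paper leaves this implicit.
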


\begin{proof}
Suppose there exists $v_i \in V_1 \cup V_{\ell}^+$ such that $v_i \in e$.  If $v_i \in V_1$, the path $P_1^i$ is a longest path but its endpoint $v_i$ is contained in an edge outside of $E(P')$ contradicting Lemma~\ref{outsideedge}. The case for $v_i \in V_{\ell}^+$ is symmetric.
\end{proof}

\begin{lemma}\label{notk+1} The longest path of $H$ contains at least $k+2$ vertices.
\end{lemma}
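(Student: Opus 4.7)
Suppose for contradiction that the longest path of $H$ has exactly $k+1$ vertices, and fix such a path $P = v_1, e_1, v_2, \ldots, e_k, v_\ell$ with $\ell = k+1$. The plan is to exhibit a cycle of length at least $\min\{k, t+2\}$ and contradict the assumption that $H$ is a counterexample. The starting point is that by Lemma~\ref{outsideedge} every edge containing $v_1$ (resp.\ $v_\ell$) lies in $E(P)$, so the degree hypothesis $\delta(H) \geq \lceil k/2 \rceil$ translates into $|V_1|, |V_\ell^+| \geq \lceil k/2 \rceil$.

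First I would rule out the easy closings. If $v_1 \in e_k$, the cycle $v_1, e_1, v_2, \ldots, v_k, e_k, v_1$ has length $k$; symmetrically $v_\ell \in e_1$ closes to a $k$-cycle. If $v_i \in V_1 \cap V_\ell^+$, then $e_{i-1}$ links $v_\ell$ to $v_{i-1}$ while $e_i$ links $v_1$ to $v_{i+1}$, yielding the $k$-cycle $v_1, e_1, \ldots, v_{i-1}, e_{i-1}, v_\ell, e_k, v_k, \ldots, v_{i+1}, e_i, v_1$ that skips only $v_i$. Each outcome contradicts the absence of long cycles, so I may assume $V_1 \subseteq \{v_1,\ldots,v_{k-1}\}$, $V_\ell^+ \subseteq \{v_3,\ldots,v_{k+1}\}$, and $V_1 \cap V_\ell^+ = \emptyset$, which together give $|V_1 \cup V_\ell^+| = |V_1| + |V_\ell^+| \geq k$.

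Next I invoke Claim~\ref{intersect}: any edge $e \notin E(P)$ is disjoint from $V_1 \cup V_\ell^+$, so it must fit inside a vertex set of size at most $n - k$. Since $r \geq t+1 \geq \lceil n/2 \rceil$ and $k \geq r$, we have $r + k \geq 2r \geq n$, which forces $r > n-k$ outside the borderline case $n = 2r$, $k = r$; in the generic regime $E(H) = E(P)$ and $|E(H)| = k$, while the borderline admits at most one extra edge and can be handled separately. With $E(H) = E(P)$, both $\{e_j : v_1 \in e_j\}$ and $\{e_j : v_\ell \in e_j\}$ are subsets of the $k$-element set $E(P)$, so they intersect in at least $|V_1| + |V_\ell^+| - k \geq 2\lceil k/2 \rceil - k$ edges. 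For $k$ odd this is at least $1$, producing some $e_m$ with $\{v_1, v_\ell\} \subseteq e_m$; the boundary cases $m \in \{1, k\}$ then close to $k$-cycles as above.

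The main obstacle I expect is the interior case $1 < m < k$ (together with the $k$ even case, where the intersection count can vanish). For the interior case my strategy is to use $e_m \supseteq \{v_1, v_m, v_{m+1}, v_\ell\}$ to construct a second longest path $Q = v_1, e_1, \ldots, v_m, e_m, v_\ell, e_k, \ldots, e_{m+1}, v_{m+1}$ with endpoints $v_1$ and $v_{m+1}$, and to rerun the Lemma~\ref{outsideedge}/Claim~\ref{intersect} analysis at the new endpoint $v_{m+1}$; each such rotation furnishes a fresh $(v_1,v_\ell)$-common edge at a new index, and a Lemma~\ref{verc}-style parity/distance estimate eventually lands an index in $\{1,k\}$, triggering the closing construction. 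For the $k$-even case I would track the unique vertex $v_{m_0} \in V(P) \setminus (V_1 \cup V_\ell^+)$: its own degree bound forces it into many edges of $E(P)$, and combined with the exclusions $v_1 \notin e_{m_0}$ and $v_\ell \notin e_{m_0 - 1}$ this redistributes incidences among the other $e_i$ to restore the common-edge lower bound.
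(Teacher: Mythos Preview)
Your opening moves agree with the paper: the $V_1\cap V_\ell^+=\emptyset$ argument and the use of Claim~\ref{intersect} to force $E(H)=E(P)$ are both correct (the paper also handles the borderline $k=r=n/2$ case with a short but nontrivial argument, which you should not dismiss). However, after this point your plan diverges from the paper's and runs into real trouble.

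The paper does \emph{not} continue by analysing the path $P$. Once it knows $|E(H)|=k$, it recalls that Lemma~\ref{le2} assumes $H$ contains a cycle $C$ of length $k-1$; so there is exactly one edge $e\in E(H)\setminus E(C)$, and the rest of the proof is a case analysis on how $e$ meets $V(C)$, using the maximisation $|e\cap V(C)|$ and the vertices outside $C$ to produce a $k$-cycle. You never invoke the $(k-1)$-cycle hypothesis, and that is the idea you are missing.

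Your proposed substitute, a rotation scheme on $P$, is not yet an argument. For odd $k$ you correctly find some $e_m$ with $\{v_1,v_\ell\}\subseteq e_m$, but in the interior case $1<m<k$ the rotation to $Q$ replaces the endpoint $v_\ell$ by $v_{m+1}$; you are now looking for a common edge for the pair $(v_1,v_{m+1})$, not a ``fresh $(v_1,v_\ell)$-common edge'', and there is no reason the indices obtained must migrate towards $\{1,k\}$. Indeed, since $m\in A\cap B$ and $V_1\cap V_\ell^+=\emptyset$ forces $m+1\notin A$, the most natural closing move ($v_1\in e_{m+1}$) is blocked, so the rotation does not obviously make progress. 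For even $k$ your sketch is even thinner: when $|V_1|=|V_\ell^+|=k/2$ the edge sets $A,B\subseteq\{1,\dots,k\}$ can be disjoint (e.g.\ $A=\{1,\dots,k/2\}$, $B=\{k/2+1,\dots,k\}$ satisfies the shift constraint), and knowing that the single missing vertex $v_{m_0}$ lies in many path edges does not by itself produce a common edge or a cycle. The ``Lemma~\ref{verc}-style parity/distance estimate'' you appeal to is not stated or proved; as written this is a hope, not a step.

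In short: keep your first two paragraphs, then switch to the $(k-1)$-cycle. With $|E(H)|=k$ you have a cycle and a single leftover edge, and the paper's case split on that leftover edge is the clean way to finish.
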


\begin{proof}Suppose a longest path $P=v_1, e_1, \ldots, e_{\ell-1}, v_{\ell}$ has at most $k+1$ vertices. By Lemma~\ref{notk}, $\ell = k+1$.

If there exists some $v_j \in V_1 \cap V_\ell^+$ (i.e., $v_1 \in e_j$ and $v_\ell \in e_{j-1}$), then the cycle
\[v_1, e_1 \ldots, e_{j-2}, v_{j-1}, e_{j-1}, v_\ell, e_{\ell-1}, \ldots, v_{j+1}, e_j, v_1\] contains all vertices of $P$ except for $v_j$. Therefore $|V(C)|\geq \ell-1 \geq k$, a contradiction. It follows that 
\begin{equation}V_1 \cap V_{\ell}^+ = \emptyset.\end{equation}

Since $|V_1| \geq d_H(v_1), |V_{\ell}^+| \geq d_H(v_\ell)$, we have that $|V_1 \cup V_{\ell}^+| \geq k$, and so at most one vertex in $V(P)$ is not contained in $V_1 \cup V_{\ell}^+$.
\begin{claim}
$|E(H)| = k$. 
\end{claim}
\begin{proof}
Suppose for contradiction that there exists an edge $e \notin E(P)$. By the Claim~\ref{intersect}, $e \cap (V_1 \cup V_{k+1}^+) = \emptyset$. Since $k \geq r \geq n/2$, this is only possible when $k = r = n/2$ and $e$ is the unique edge with $e = V(H) - (V_1 \cup V_{k+1}^+)$. Moreover, this implies that $E(H) = E(P) \cup \{e\}$. So we have $V(H) - V(P) \subseteq e$, and $e$ contains at least $r-1$ vertices outside of $P$. 

If there exists a vertex $v \in e_k - V(P)$, then $v \in e$, and there exists another $v' \in e -V(P) - \{v\}$. We get a longer path by replacing the vertex $v_{k+1}$ with the path $v, e, v'$ in $P$. So $e_k \subseteq V(P)$. Moreover, if there exists a vertex $v_i \in V_1$ such that $v_i \in e_k$, then we obtain the cycle of length $k$ $C' = v_1, e_1, \ldots, v_i, e_k, v_{k}, e_{k-1}, \ldots, v_{i+1}, e_i, v_1$. Hence $V_1 \cap e_k = \emptyset$. Therefore $k+1= |V(P)| \geq |e_k| + |V_i| \geq r + \lceil k/2 \rceil$, but we assumed $r = k \geq 3$, a contradiction. 
\end{proof}

Now suppose we have a cycle of length $k-1$, $C = v_1, e_1, \ldots, e_{k-1}, v_1$. By the previous claim, there exists exactly one edge $e$ such that $e \notin E(C)$.  Among all such pairs $(C, e)$ suppose we chose one to maximize $|e \cap V(C)|$. 


Suppose first that $e \subseteq V(C)$. This implies $r = n/2, k = n/2 + 1$. Let $v \in V(H) - V(C)$. We have that $v \notin e$, and $v$ is in at least $k/2$ edges of $C$, hence there exists a consecutive pair of edges, say $e_1, e_2$ containing $v$. The cycle \[C'=v_1, e_1, v, e_2, v_2, e, v_3, e_3, \ldots, v_{k-1}, e_{k-1}, v_1\] has length at least $k$.

Therefore $X:=e \setminus V(C)$ is nonempty. Define $E_X = \{e_i \in E(C): v \in e_i\text{ for some } v\in X\}$. 

\begin{claim}
$E_X$ cannot contain two consecutive edges in $C$.
\end{claim}
\begin{proof}
Suppose $e_1, e_{2} \in E_X$. Then there exists $v, v' \in X$ such that $v \in e_1, v' \in e_{2}$. If $v= v'$, then let $C'$ be the cycle obtained by replacing the vertex $v_{2}$ with $v$. Since $v \in V(C') \cap e$ and we chose $(C,e)$ to maximize $|V(C) \cap e|$, we must have $v_{2} \in e$. We obtain the cycle of length $k$
\[v_1, e_1, v, e, v_2, e_2, v_3, \ldots, v_{k-1}, e_{k-1}, v_1,\] a contradiction.
Therefore we may assume $v\neq v'$. Then by replacing in $C$ the segment $v_1, e_1, v_2, e_2, v_3$ with $v_1, e_1, v, e, v', e_2, v_3$ we obtain a cycle of length $k$.
\end{proof}

So we may assume that since $|E_X| \geq \delta - 1 = \lfloor (k-1)/2 \rfloor$, if $k$ is odd then $E_X = \{e_1, e_3, e_5, \ldots, e_{k-2}\}$ and if $k$ is even, $E_X = \{e_1, e_3, e_5, \ldots, e_{k-3}\}$. Moreover, by the previous claim, $|E_X| = \delta - 1$, and therefore for every $v \in X$, the edges containing $v$ are exactly $E_x \cup \{e\}$. 

Let $e_i \in E_X$, and suppose $v_i \in e$. Then we may replace in $C$ the segment $v_i, e_i, v_{i+1}$ with $v_i, e, v, e_i, v_{i+1}$ for any $v \in X$ to obtain a cycle of length $k$, a contradiction. Similarly, we have $v_{i+1} \notin e$. If $k$ is odd, since $E_x$ contains every other edge of $C$, we have $e \cap C = \emptyset$, i.e., $e = X$. Otherwise, if $k$ is even, then $e \subseteq X \cup \{v_{k-1}\}$.

For every $e_i \in E_X$, $X \cup \{v_i, v_{i+1}\} \subseteq e_i$. When $k$ is odd, we have $|X| = |e| = r$, so $|e_i| \geq r+2$. When $k$ is even, $|X| \geq r-1$, and we get $|e_i| \geq r-1 + 2 \geq r+1$. In either way, we obtain a contradiction.

\end{proof}

We are now ready to prove Lemma~\ref{le2}.
%
%
%
%
%
%


{\it Proof of Lemma~\ref{le2}}.
Suppose there does not exist a cycle of length $k$ or greater. Among longest paths choose $P=v_1,e_1,\ldots,v_{\ell-1},e_{\ell-1},v_\ell$ so that $e_1$ has as few vertices outside of $V(P)$ as possible.  By Lemmas~\ref{notk} and~\ref{notk+1}, $\ell \geq k+2$.

Let $J(1)$ be the maximum $j$ such that $v_j\in e_1$ and $J(\ell-1)$ be the minimum $j$ such that $v_j\in e_{\ell-1}$

Then
\begin{equation}\label{o10}
J(1)\leq \min\{r+1,k\}\,\mbox{\em and}\, J(\ell-1)\geq 3.
\end{equation}

Let $i(1)$ be the second smallest index $i$ such that $v_1\in e_i$. By Lemma~\ref{outsideedge}, it is well defined. Similarly, let $i(\ell)$ be the second largest index $i$ such that $v_\ell\in e_i$. Also, let $I(1)$ be the largest index $i$ such that $v_1\in e_i$, and   $I(\ell)$ be the smallest index $i$ such that $v_\ell\in e_i$. 

Since $\ell\geq k+2$ and $H$ has no $k^+$-cycles, 
\begin{equation}\label{o101}
\mbox{\em
$I(1)\leq k-1$, and $I(\ell)\geq 3$.}
\end{equation}

\begin{claim}\label{cl2}
$i(1)\leq i(\ell)$.
\end{claim}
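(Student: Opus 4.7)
The plan is to establish the inequality $i(1)\leq i(\ell)$ by a direct pigeonhole count on the indices of edges of $P$ incident to $v_1$ and to $v_\ell$. The inputs are: the minimum-degree hypothesis $\delta(H)\geq \lceil k/2\rceil$ from Theorem~\ref{main41}, Lemma~\ref{outsideedge} (which, since $r\geq t+1$, forbids any edge of $H\setminus E(P)$ from meeting a vertex of $V_1\cup V_\ell^+$), and the bounds $I(1)\leq k-1$ and $I(\ell)\geq 3$ from~\eqref{o101}.

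First I would bound $i(1)$ from above. Set $A_1=\{j : v_1\in e_j\}$. Because $v_1\in V_1$, Lemma~\ref{outsideedge} gives that every edge of $H$ containing $v_1$ lies in $E(P)$, hence $|A_1|=\deg_H(v_1)\geq \lceil k/2\rceil$. By the definitions of $i(1)$ and $I(1)$, the set $A_1\setminus\{1\}$ is entirely contained in $[i(1),\,I(1)]$, so
\[
\lceil k/2\rceil \;\leq\; |A_1|\;\leq\; 1+\bigl(I(1)-i(1)+1\bigr)\;\leq\;(k-1)-i(1)+2,
\]
which rearranges to $i(1)\leq k+1-\lceil k/2\rceil=\lfloor k/2\rfloor+1$. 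Symmetrically, with $A_\ell=\{j : v_\ell\in e_j\}$, Lemma~\ref{outsideedge} and the observation $v_\ell\in V_\ell^+$ give $|A_\ell|\geq \lceil k/2\rceil$; the set $A_\ell\setminus\{\ell-1\}$ lies in $[I(\ell),\,i(\ell)]$, and combined with $I(\ell)\geq 3$ this forces $i(\ell)\geq \lceil k/2\rceil+1$.

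Combining the two estimates yields
\[
i(1)\;\leq\;\lfloor k/2\rfloor+1\;\leq\;\lceil k/2\rceil+1\;\leq\;i(\ell),
\]
which is the claim. There is no serious obstacle: the entire argument is pigeonhole. The only point to verify carefully is the invocation of Lemma~\ref{outsideedge}, which converts the minimum-degree hypothesis into the equality $\deg_H(v_1)=|A_1|$ (and similarly for $v_\ell$); this uses $r\geq t+1$ and our standing assumption that $H$ has no cycle of length $\geq\min\{k,t+2\}$, both of which are in force throughout the proof of Lemma~\ref{le2}.
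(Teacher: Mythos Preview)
Your argument is correct and actually somewhat cleaner than the paper's. You bound $i(1)$ and $i(\ell)$ separately: from $|A_1|\geq\lceil k/2\rceil$, $A_1\setminus\{1\}\subseteq[i(1),I(1)]$, and $I(1)\leq k-1$ you extract $i(1)\leq\lfloor k/2\rfloor+1$; the symmetric count with $I(\ell)\geq 3$ yields $i(\ell)\geq\lceil k/2\rceil+1$, and the claim follows at once. The only input beyond~\eqref{o101} is that all edges through $v_1$ (resp.\ $v_\ell$) lie in $E(P)$, which is exactly what Lemma~\ref{outsideedge}/Claim~\ref{intersect} provides under the standing hypothesis that $H$ has no cycle of length at least $\min\{k,t+2\}$.

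The paper takes a different route. It argues by contradiction: assuming $i(1)>i(\ell)$, it builds the cycle
\[
v_1,e_1,\ldots,v_{i(\ell)},e_{i(\ell)},v_\ell,e_{\ell-1},\ldots,v_{i(1)+1},e_{i(1)},v_1
\]
of length $i(\ell)+\ell-i(1)$, and since $\ell\geq k+2$ this forces $i(1)-i(\ell)\geq 3$. It then telescopes $I(1)=(I(1)-i(1))+(i(1)-i(\ell))+(i(\ell)-I(\ell))+I(\ell)$, using the same pigeonhole bounds $I(1)-i(1)\geq k/2-2$ and $i(\ell)-I(\ell)\geq k/2-2$ together with $I(\ell)\geq 3$, to get $I(1)\geq k+2$, contradicting~\eqref{o101}. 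Your version avoids the auxiliary cycle construction entirely; the paper's version makes the obstruction (a long cycle) explicit, which is in keeping with the style of the surrounding argument, but for this particular claim your direct count is more economical.
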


\begin{proof} Suppose $i(1)> i(\ell)$. Then, in view of  the cycle 
$$v_1,e_1,\ldots,e_{i(\ell)-1},v_{i(\ell)},e_{i(\ell)},v_{\ell},e_{\ell-1},v_{\ell-1},\ldots,v_{i(1)+1},e_{i(1)},v_1,$$
$i(1)-i(\ell)\geq 3$. Since $i(\ell)\geq I(\ell)+k/2-2$ and $I(1)\geq i(1)+k/2-2$, by~\eqref{o101},
$$I(1)=(I(1)-i(1))+(i(1)-i(\ell))+(i(\ell)-I(\ell))+I(\ell)\geq (k/2-2)+3+(k/2-2)+3=k+2,$$
 contradicting~\eqref{o101}.
 \end{proof}

\begin{claim}\label{cl3}
$J(1)\leq i(\ell)$.
\end{claim}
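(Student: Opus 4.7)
I will argue by contradiction. Assuming $J(1) > i(\ell)$, I build a Berge cycle whose length either gives an immediate contradiction with the no-long-cycle hypothesis, or forces a lower bound on $J(1)$ that conflicts with~\eqref{o10}.

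The cycle is obtained by short-cutting $P$ using $e_{i(\ell)}$ on one side and $e_1$ on the other:
\[C := v_2, e_2, v_3, \ldots, e_{i(\ell)-1}, v_{i(\ell)}, e_{i(\ell)}, v_\ell, e_{\ell-1}, v_{\ell-1}, \ldots, e_{J(1)+1}, v_{J(1)+1}, e_{J(1)}, v_{J(1)}, e_1, v_2.\]
The jump $v_{i(\ell)} \to v_\ell$ is legal because $v_\ell \in e_{i(\ell)}$ by definition of $i(\ell)$, and the closing jump $v_{J(1)} \to v_2$ is legal because $v_2, v_{J(1)} \in e_1$. The hypothesis $J(1) > i(\ell)$ ensures that the two arms $\{v_2,\ldots,v_{i(\ell)}\}$ and $\{v_{J(1)}, v_{J(1)+1},\ldots,v_\ell\}$ are vertex-disjoint and that the edge-index sets $\{2,\ldots,i(\ell)\}$ and $\{J(1),\ldots,\ell-1\}$ are disjoint, so $C$ is a genuine Berge cycle with $|V(C)| = |E(C)| = \ell + i(\ell) - J(1)$.

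If this length is at least $k$, we are done. Otherwise $\ell + i(\ell) - J(1) \leq k-1$, which combined with $\ell \geq k+2$ gives $J(1) - i(\ell) \geq 3$. Since $\delta(H) \geq \lceil k/2 \rceil$ and by Lemma~\ref{outsideedge} every edge through $v_\ell$ lies in $E(P)$, exactly one such edge is $e_{\ell-1}$ and all the rest (at least $\lceil k/2 \rceil - 1$ of them) have indices in $[I(\ell), i(\ell)]$; hence $i(\ell) - I(\ell) \geq \lceil k/2 \rceil - 2$. Together with $I(\ell) \geq 3$ from~\eqref{o101}, this yields the telescoping bound
\[J(1) \;=\; (J(1) - i(\ell)) + (i(\ell) - I(\ell)) + I(\ell) \;\geq\; 3 + (\lceil k/2 \rceil - 2) + 3 \;=\; \lceil k/2 \rceil + 4,\]
to be pitted against $J(1) \leq \min\{r+1, k\}$ from~\eqref{o10}.

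The step I expect to be most delicate is verifying that the lower bound $J(1) \geq \lceil k/2 \rceil + 4$ strictly contradicts $J(1) \leq \min\{r+1, k\}$ in every parameter regime. For small $k$ (roughly $k \leq 7$) the clash with $J(1) \leq k$ is immediate, and when $r$ is close to $\lceil k/2 \rceil$ the clash with $J(1) \leq r+1$ is immediate; in the remaining range of large $k$ and large $r$, one may need to sharpen the cycle construction, or supplement with the symmetric version of the argument applied to the reversed path $P$ (which produces a dual bound on $J(\ell-1)$ playing off against $J(\ell-1) \geq 3$) to close the gap.
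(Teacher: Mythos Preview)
Your cycle construction and the telescoping bound $J(1)\geq \lceil k/2\rceil+4$ are correct, but the proof is genuinely incomplete: the inequality $\lceil k/2\rceil+4>\min\{r+1,k\}$ simply fails in the main regime. Since in this lemma $r$ can be as large as $k$, the bound $J(1)\leq k$ from~\eqref{o10} is compatible with $J(1)\geq \lceil k/2\rceil+4$ whenever $k\geq 8$, and your hedge about ``sharpening'' or ``symmetry'' does not close this gap. The symmetric version on the reversed path would give a lower bound on $\ell-J(\ell-1)$, which plays off against $J(\ell-1)\geq 3$; that inequality is far too weak to yield a contradiction either.

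The paper's argument starts from the same cycle family you found (so the conclusion $j\geq i+3$ whenever $v_j\in e_1$, $v_\ell\in e_i$, $j>i$), but then turns it into a \emph{counting} statement rather than a bound on $J(1)$: the $d(v_\ell)-1\geq \lceil k/2\rceil-1$ edges $e_i$ with $v_\ell\in e_i$ and $i\leq i(\ell)$ together forbid at least $\lceil k/2\rceil$ indices in $[1,J(1)]$ from lying in $e_1$, so $|e_1\cap V(P)|\leq k-\lceil k/2\rceil\leq k/2$ and hence $|e_1\setminus V(P)|\geq r-k/2$. The decisive step you are missing is the use of the extremal choice of $P$ (among longest paths, $e_1$ has the fewest vertices outside $V(P)$): this forces $|e_{\ell-1}\setminus V(P)|\geq r-k/2$ as well. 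Since $(e_1\setminus V(P))\cap(e_{\ell-1}\setminus V(P))=\emptyset$ (otherwise $P$ extends), one gets
\[
n\geq |V(P)|+|e_1\setminus V(P)|+|e_{\ell-1}\setminus V(P)|\geq (k+2)+2(r-k/2)=2r+2,
\]
contradicting $r\geq n/2$. Your approach never invokes either the minimality of $|e_1\setminus V(P)|$ or the inequality $r>t$, and without at least one of these the argument cannot be completed.
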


\begin{proof} Suppose $J(1)> i(\ell)$. For each $i$ and $j$ such that $v_\ell\in e_i$, $v_j\in e_1$ and $j> i$, the cycle
$C_{i,j}=v_j,e_j,v_{j+1},\ldots,v_\ell,e_i,v_i,e_{i-1},v_{i-1},\ldots,v_2,e_1,v_j$ yields that $j\geq i+3$. 

In particular, by~\eqref{o10}, $i(\ell) \leq k-3$. The edge $e_{i(\ell)}$ forbids $i(\ell)+1$ and $i(\ell)+2$ in $[1,J(1)]$ from be indices of vertices contained in $e_1$. Each of the $d(v_\ell)-2$ edges $e_{i'}$ containing $v_\ell$ with $i'<i(\ell)$ also forbids at least one additional index $v_{i'+1}$ from belonging to $e_1$.
So, by~\eqref{o10}, $|e_1\cap V(P)|\leq k-k/2=k/2$. Hence $|e_1-V(P)|\geq r-k/2$. By the choice of $e_1$, also $|e_{\ell-1}-V(P)|\geq r-k/2$.
Since $(e_1-V(P))\cap (e_{\ell-1}-V(P))=\emptyset$, we conclude that
$$n\geq |V(P)|+|e_1-V(P)|+|e_{\ell-1}-V(P)|\geq \ell+(r-k/2)+(r-k/2)\geq (k+2)+2r-k=2r+2,$$
a contradiction.\end{proof}

\medskip
Define $i'(1)=\max\{2,i(1)-1\}$ and $i'(\ell)=\min\{\ell-2,i(\ell)+1\}$. If $i(\ell)\leq \ell-3$, then let 
$$P'(\ell)=v_1,e_1,\ldots, v_{i(\ell)},e_{i(\ell)},v_\ell, e_{\ell-1},v_{\ell-1},\ldots,e_{i(\ell)+1},v_{i(\ell)+1}.$$ If $i(\ell)=\ell-2$ and $v_{\ell-2}\in 
e_{\ell-1}$,   then let 
$P'(\ell)=v_1,e_1,\ldots, v_{\ell-2},e_{\ell-1},v_{\ell-1} ,e_{\ell-2},v_{\ell}$. In both cases,
\begin{equation}\label{o11}
\parbox{14cm}{\em
$P'(\ell)$ coincides with $P$ up to $v_{i(\ell)}$, has the same vertex set as $P$, and the last edge is $e_{i'(\ell)}$.}
\end{equation}

As we mentioned above, $(e_1-V(P))\cap (e_{\ell-1}-V(P))=\emptyset$. Also, if $v_j\in e_{\ell-1}$ and $v_{j+1}\in e_1$, then the cycle
$v_2,e_2,v_3,\ldots,v_j,e_{\ell-1},v_{\ell-1},e_{\ell-2},\ldots,v_{j+1},e_1,v_2$ has at least $k$ vertices. Thus, $e_1$ ``forbids" for $e_{\ell-1}$
 $|e_{\ell-1}-V(P)|$ vertices outside of $V(P)$ and $|e\cap V(P)|-1$ vertices in $V(P)$. By~\eqref{o101}, $v_1$ and $v_2$ also cannot belong to
 $e_{\ell-1}$.
 
Now we consider some cases. Suppose first that $v_3\notin e_1$. Then $r$ vertices are forbidden for $e_{\ell-1}$. And by~\eqref{o101}, $v_{k-1},v_k,\ldots,v_\ell$ are not forbidden, so $e_{\ell-1}$ contains them all, in particular, $v_{\ell-2}\in 
e_{\ell-1}$. By Claim~\ref{cl3}, all $r$ forbidden vertices for $e_{\ell-1}$ are outside of $P$ or in the set $\{v_1,\ldots,v_{J(1)-1}\}$. Thus, by~\eqref{o11}, they are also forbidden for $e_{i'(\ell)}$. But $|e_{i'(\ell)}\cup e_\ell| \geq r+1$, a contradiction.

So suppose $v_3\in e_1$. If $v_1\in e_2$, then let $P_1=v_1,e_2,v_2,e_1,v_3,e_3,\ldots,v_\ell$. Since by~\eqref{o10}, $J(\ell-1)\geq 3$, each vertex of $e_2-e_1$ forbids an extra vertex for $e_{\ell-1}$. So, again $r$ vertices are forbidden for $e_{\ell-1}$. Thus repeating the argument as the previous paragraph again yields a contradiction.
Therefore we may assume $v_1 \notin e_2$. If there is a vertex $v\in e_2-V(P)$, then let $P_2=v,e_2,v_2,e_1,v_3,e_3,\ldots,v_\ell$. This path differs from $P$ only in the first vertex, so as before, each vertex of $e_2-e_1$ forbids an extra vertex for $e_{\ell-1}$. Thus repeating the argument of the previous paragraph again yields a contradiction. If $e_2$ contains a vertex $v_i$ for some $i\geq r+2$. Then the cycle $v_3,e_3,v_4,\ldots,v_i,e_2,v_2,e_1,v_3$ has $i-1\geq r+1$ vertices. 

The remaining case is $e_2=\{v_2,v_3,\ldots,v_{r+1}\}$. If for some $3\leq i\leq r$, $v_i\in e_{\ell-1}$, then the cycle
$C_i=v_{i+1},e_{i+1},\ldots,v_{\ell-1},e_{\ell-1},v_i,e_{i-1},v_{i-1},\ldots,v_3,e_1,v_2,e_2,v_{i+1}$ has $\ell-2\geq k$ vertices. Thus
vertices $v_1,\ldots,v_r$ are forbidden for $e_{\ell-1}$. It follows that $e_{\ell-1}=V(H)-\{v_1,\ldots,v_r\}$. Thus $P'(\ell)$ exists.
If $3\leq i(\ell)\leq r$, then  the cycle
$v_{i(\ell)+1},e_{i(\ell)+1},\ldots,v_{\ell},e_{i(\ell)},v_{i(\ell)},e_{i(\ell)-1},v_{i(\ell)-1},\ldots,v_3,e_1,v_2,e_2,v_{i(\ell)+1}$ has $\ell-1\geq k+1$ vertices.
Thus $i(\ell)\geq r+1$, and hence the last edge $e_{i'(\ell)}$ of $P'(\ell)$ also is disjoint from $\{v_1,\ldots,v_r\}$. This is a contradiction. \qed

\section{The path $P$ in a  best pair $(C,P)$ is nontrivial}\label{secl1}

Consider a best pair $(C,P)$ with $C = v_1, e_1, v_2, \ldots, e_{s-1}, v_s, e_s, v_1$ and $P = u_1, f_1, u_2, \ldots, f_{\ell-1}, u_\ell$. In this section, we rule out the case that $P$ contains only one vertex, i.e., $\ell=1$.


Observe that if 
$\ell=1$ 
and $(C,P)$ is a best pair, then every edge of $H'$ contains at most one vertex outside of $V(C)$, otherwise we find a longer  path. 

%
%
 
\subsection{The case of $\ell=1$ and $r>t$}

 In this section we prove the following lemma.

\begin{lemma}\label{ell1}
Let $n$, $k$, and $r$ be positive integers such that $n \geq k$ and $r > t$. If $H$ is an $r$-uniform hypergraph with at least $k$ edges such that
$\delta(H) \geq \lceil \frac{k}{2} \rceil$ and $c(H) < k$, then $\ell = |V(P)| \geq 2$.
\end{lemma}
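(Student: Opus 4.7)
We argue by contradiction: suppose the best pair is $(C, \{u_1\})$ with $u_1 \notin V(C)$. Let $s = |V(C)|$, $B = \{e_i \in E(C) : u_1 \in e_i\}$, and $N_C = N_{H'}(u_1) \cap V(C)$. Since $(C,P)$ is best with $\ell=1$, no edge of $H'$ can contain two vertices outside $V(C)$: any two such vertices would span a length-$1$ Berge path vertex-disjoint from $C$, yielding a pair strictly better by criterion~(ii). Hence every $e \in H'$ through $u_1$ satisfies $e \setminus \{u_1\} \subseteq N_C$, giving
\[
d_H(u_1) \;\leq\; |B| + \binom{|N_C|}{r-1}.
\]
Corollary~\ref{cycleneighbors} gives $|N_C| \leq \lceil(s-1-|B|)/2\rceil \leq t$. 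Since $r > t$, $r-1 \geq t \geq |N_C|$, so $\binom{|N_C|}{r-1} = 0$ unless $|N_C| = r-1 = t$, which forces $r = t+1$.

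In the subcase $|N_C| = t$, the bound rearranges to $|B| \leq s - 2t$, so in particular $s \geq 2t$; combined with $s \leq k-1$ this gives $k \geq 2t+1$ and hence $\lceil k/2 \rceil \geq t+1 \geq 3$ (using $r \geq 3$, whence $t \geq 2$). Meanwhile $s \leq n-1 \leq 2t+1$ forces $|B| \leq 1$, and therefore $d_H(u_1) \leq |B| + 1 \leq 2 < \lceil k/2 \rceil \leq \delta(H)$, a contradiction.

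In the main subcase $\binom{|N_C|}{r-1} = 0$, we have $|B| = d_H(u_1) \geq \lceil k/2 \rceil$; combined with $s \leq k-1$, this gives $2|B| > s$, so by pigeonhole $B$ contains at least $2|B|-s \geq 1$ consecutive pairs on $C$. Fix one such pair $e_i, e_{i+1} \in B$. I claim that no $g \in H'$ can contain $\{v_{i+1}, v_{i+2}\}$: otherwise
\[
v_1, e_1, \ldots, v_i, e_i, u_1, e_{i+1}, v_{i+1}, g, v_{i+2}, e_{i+2}, \ldots, v_s, e_s, v_1
\]
is a Berge cycle of length $s+1$, contradicting $s = c(H)$. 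A symmetric construction, using $g$ between $v_i$ and $v_{i+1}$ and routing $v_{i+1}, e_i, u_1, e_{i+1}, v_{i+2}$, shows that no $g \in H'$ contains $\{v_i, v_{i+1}\}$ either.

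The main obstacle is turning these local exclusions into a global contradiction. My plan is to apply the above to all $\geq 2|B|-s$ consecutive $B$-pairs (each of which forbids two consecutive cycle-pairs from lying inside any $H'$-edge) and to combine these many exclusions with the degree hypothesis on $V(C)$-vertices and the bound $|E(H)| \geq k$. A careful double-count of incidences between $V(C)$ and $E(H')$ should then force either an $H'$-edge containing two vertices outside $V(C)$ (contradicting $\ell=1$) or a Berge cycle of length exceeding $s$ (contradicting $c(H)=s<k$), completing the proof.
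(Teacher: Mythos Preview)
Your argument is genuinely incomplete: the final paragraph is a plan, not a proof. After correctly establishing that a consecutive pair $e_i,e_{i+1}\in B$ forbids any $g\in H'$ from containing $\{v_i,v_{i+1}\}$ or $\{v_{i+1},v_{i+2}\}$, you simply announce that ``a careful double-count of incidences\ldots\ should then force'' a contradiction. Nothing has been counted, and it is not clear what the double-count would even be: the exclusions you have produced are only about \emph{consecutive} vertex-pairs on $C$, while an $H'$-edge of size $r\geq n/2$ sitting inside $V(C)$ can easily avoid a handful of consecutive pairs and still have $r$ vertices there. With only $2|B|-s\geq 1$ consecutive $B$-pairs guaranteed, you have forbidden as few as two pairs; this does not by itself contradict the existence of an $H'$-edge, nor does it obviously interact with $|E(H)|\geq k$ or with the degrees of cycle vertices.

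The observation you are missing is that your local exclusion is the tip of a much stronger one: for \emph{any} $e_i,e_j\in B$ (not just consecutive ones) and any $g\in H'$ with $\{v_i,v_j\}\subseteq g$, the cycle
\[
v_1,e_1,\ldots,v_i,\,g,\,v_j,e_{j-1},\ldots,v_{i+1},\,e_i,\,u_1,\,e_j,\,v_{j+1},\ldots,v_s,e_s,v_1
\]
has length $s+1$; symmetrically for $\{v_{i+1},v_{j+1}\}$. Hence any $g\in H'$ meets $\{v_i:e_i\in B\}$ in at most one vertex and $\{v_{i+1}:e_i\in B\}$ in at most one vertex. Now flip the perspective: rather than using $B$ to constrain $H'$-edges, fix an $e\in H'$ (one exists since $|E(H)|\geq k>s$) and use it to constrain $B$. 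The fact above says $e$ \emph{prohibits} all but at most two of the edges in $\{e_{i-1},e_i:v_i\in e\}$ from lying in $B$. Since $|e\cap V(C)|\geq r-1$ and $r\geq n/2$, this set of prohibited edges is large (roughly $r$), and combining with $|B|\geq\lceil k/2\rceil$ forces $s\geq r+\lceil k/2\rceil-O(1)$, which contradicts $s\leq k-1$ and $r\geq k/2$ after a short case analysis on whether $e\subseteq V(C)$ and whether $e\cap V(C)$ is consecutive on $C$. This is the route the paper takes; your dual approach could in principle be pushed through, but only after you upgrade to the non-consecutive exclusion and actually carry out the count.
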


\begin{proof}Suppose $\ell = 1$. We consider two cases. 

\medskip
{\bf Case 1:} $u_1$ is contained in some $e \in E(H')$.  By Claim~\ref{noconsecutive}, no two vertices of $e$ can be consecutive on $C$. Since $e$ contains $r-1$ vertices of $C$, this gives $r-1 \leq \lfloor s/2 \rfloor$. We know that $r \geq n/2$, so this implies that either $s = n-2$ and $n$ is even, or $s = n-1$. In either case, there are at most two vertices in $V(C)-e$ that are consecutive along $C$. Thus any edge $f \in E(H')$ with $f \neq e$ containing $u_1$ must have the property that $v_i \in e$ and $v_{i+1} \in f$ for some $i$. However, replacing $e_i$ in $C$ with $e, u_1, f$ extends $C$, so such an edge $f$ cannot exist. If $u_1 \in e_j$, then $v_j, v_{j+1} \notin e$ by Claim~\ref{notneighbor}. Thus $u_1$ is contained in at most one edge in $E(H')$ and at most one edge in $E(C)$. So $\lceil k/2 \rceil \leq \delta(H) \leq d_H(u_1) \leq 2$, which can only be true if $k \in \{3,4\}$. Since $3 \leq t+2 \leq s \leq k-1 \leq 3$, $s = 3$, and therefore $e$ must contain at least 2 consecutive vertices in $C$, contradicting Claim~\ref{noconsecutive}. 

\medskip
{\bf Case 2:} $u_1$ is only contained in edges of $C$.

\medskip
{\bf Case 2.1:} There is some edge $e \in E(H')$ with $e \subseteq V(C)$. If $v_i, v_j \in e$ and $u_1 \in e_i, e_j$ for some $i < j$, then the cycle \[ v_1, e_1, v_2, \ldots, e_{i-1}, v_i, e, v_j, e_{j-1}, v_{j-1}, \ldots, e_{i+1}, v_{i+1}, e_i, u_1, e_j, v_{j+1}, e_{j+1}, \ldots, e_{s-1}, v_s, e_s, v_1
\]

is longer than $C$, a contradiction. A symmetric longer cycle can be found if $u_1 \in e_{i-1}, e_{j-1}$. Thus $u_1$ is contained in at most one edge of $\{e_i: v_i \in e\}$ and at most one edge of $\{e_{i-1}: v_i\in e\}$.

If the vertices of $e$ are not all consecutive along $C$, then there are at least $r+2$ edges in $\{e_i: v_i \in e\} \cup \{e_{i-1}: v_i \in e\}$. Since $u_1$ is contained in at most two such edges,  $e$ prohibits at least $r$ edges of $C$ from containing $u_1$. Since $u_1$ is contained in at least $k/2$ edges of $C$, we have \[r+k/2 \leq s \leq k-1,\] which implies $r \leq k/2-1$, contradicting that $r \geq n/2 \geq k/2$. 

If the vertices of $e$ are consecutive along $C$, by symmetry say $e = \{v_1, \ldots, v_r\}$, then $e$ prohibits at least $r-1$ edges of $C$ from containing $u_1$, so \[r-1+k/2 \leq s \leq k-1.\] This implies $r \leq k/2$, which gives a contradiction unless $k=n$ is even, $r=n/2$, $s=k-1 = n-1$, and $u_1$ is contained in exactly two edges of $\{e_i: v_i \in e\} \cup \{e_{i-1}: v_i \in e\}$. The last condition implies that $u_1$ must be contained in $e_r$ and $e_s$ because any other such edge $e_i$ satisfies that $v_i, v_{i+1} \in e$. 
Now consider the cycle $C'$ formed by replacing $e_{r-1}$ with $e$ in $C$. Since $s = n-1$ and $u_1 \notin V(C)=V(C')$, we have that $e_{r-1} \subseteq V(C')$. Let $v_i \in e_{r-1} - e$ (so $i \in \{r+1, \ldots, s\}$). Since $u_1 \in e_r$ and $v_r \in e_{r-1}$, the same argument applied to $C'$ and $e_{r-1}$ implies that $u_1 \notin e_i$.  Thus $e_{r-1}$ prohibits $u_1$ from belonging to an additional edge of $C$.  It follows that at least $r=k/2$ edges of $C$ cannot contain $u_1$ and $k/2$ edges of $C$ must contain $u_1$, contradicting that $s = k-1$.

\medskip
{\bf Case 2.2:} Each $e \in E(H')$ contains exactly one vertex $v \notin V(C)$. Since $C$ has at most $k-1$ edges, and $|E(H)| \geq k$, $E(H') \neq \emptyset$. Fix an edge $e \in E(H')$ and corresponding vertex $v \notin V(C)$. We must have $v \neq u_1$ because $u_1$ is contained only in edges of $C$. As before, $u_1$ is contained in at most one edge from each set $\{e_i: v_i \in e\}$ and $\{e_{i-1}: v_i \in e\}$. If the vertices of $e \cap V(C)$ are not all consecutive along $C$, then $e$ prohibits at least $r-1$ edges of $C$ from containing $u_1$. Since $u_1$ must be contained in at least $k/2$ edges of $C$, we have \begin{equation}\label{rbound} r-1+k/2 \leq s \leq k-1,\end{equation} which implies $r \leq k/2$. This gives a contradiction unless $k=n$ is even, $r=n/2$, and $s=k-1$. However, $u_1$ and $v$ are both outside of $C$, so $s \leq n-2 = k-2$, a contradiction.

If the vertices of $e \cap V(C)$ are consecutive along $C$, then $e$ prohibits at least $r-2$ edges of $C$ from containing $u_1$, so \[r-2+k/2 \leq s \leq \min\{k-1,n-2\}.\] This implies $r \leq k/2 +1$, which gives a contradiction if $k \leq n-3$. 

If $k \geq n-2$, then we get a contradiction unless $s = \min\{k-1,n-2\}$ and $r = \lceil n/2 \rceil$. If there exists some $f \in E(H')$ with $v \in f$ and $f \neq e$, then $f$ prohibits at least one additional edge of $C$ from containing $u_1$, using the same arguments as for $e$. In this case, we have $r-1+k/2 \leq s$, which gives a contradiction similar to \eqref{rbound}. Otherwise, $v$ must be contained in at least $k/2-1$ edges of $C$. If $v_i \in e$ then $v \notin e_i, e_{i-1}$ by Claim~\ref{notneighbor}. Thus $e$ prohibits at least $r$ edges of $C$ from containing $v$, so $r+k/2-1 \leq s$, giving the same contradiction as \eqref{rbound}.
\end{proof}

\subsection{The case of $\ell=1$ and $r=t$}

We first prove a claim that will be used in this section and the following.

\begin{claim}\label{b12}
Let $n$, $k$, and $r$ be positive integers such that $n \geq k$ and $r \leq t$. If $H$ is an $r$-uniform hypergraph with at least $k$ edges such that
$\delta(H) \geq {t \choose r-1}+1$, $c(H) < k$, and $\ell = 1$, then $u_1$ is contained in at least 2 edges of $C$.
\end{claim}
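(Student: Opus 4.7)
I plan to prove Claim~\ref{b12} by contradiction. Suppose $|B_1|\leq 1$. Since $\ell=1$ and $(C,P)$ is best, every edge $f\in E(H')$ through $u_1$ must have its remaining $r-1$ vertices in $V(C)$ — otherwise some vertex outside $V(C)$ other than $u_1$ could be added to $P$, strictly improving criterion (ii). Letting $S:=N_{H'}(u_1)$ and $m:=|S|$, this gives
\[d_H(u_1)\leq |B_1|+{m\choose r-1}.\]
Corollary~\ref{cycleneighbors} applied with $A=B_1$ yields $m\leq \lceil (s-1-|B_1|)/2\rceil$, and $s\leq n-1$ since $u_1\notin V(C)$.

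If $|B_1|=0$, then $m\leq \lceil (n-2)/2\rceil=t$, so $d_H(u_1)\leq {t\choose r-1}$, contradicting $\delta(H)\geq {t\choose r-1}+1$. If $|B_1|=1$ and $m\leq t-1$, then since $r\leq t$ we have ${t-1\choose r-2}\geq 1$, hence ${t-1\choose r-1}+1\leq{t\choose r-1}$, and again $d_H(u_1)\leq {t\choose r-1}$. The only case surviving this count is $|B_1|=1$, $m=t$, which forces $\lceil (s-2)/2\rceil=t$, i.e.\ $s=n-1$ and $n$ even; in particular $V(H)\setminus V(C)=\{u_1\}$. Claims~\ref{noconsecutive} and~\ref{notneighbor}, combined with the induction behind Claim~\ref{indep}, show that $S$ is an independent set of size $t$ in the path $v_3 v_4 \cdots v_{n-1}$ (of $n-3$ vertices); since that path has odd order $n-3$, this maximum independent set is unique and equals $\{v_3,v_5,\ldots,v_{n-1}\}$. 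Moreover the degree bound is saturated only if every $(r-1)$-subset $T\subseteq S$ gives an edge $\{u_1\}\cup T\in E(H)$.

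To finish this extremal subcase I will build a Berge cycle of length $n$, contradicting $c(H)<k\leq n$. Pick $w\in V(C)-S-\{v_1,v_2\}=\{v_4,v_6,\ldots,v_{n-2}\}$, nonempty because $t\geq r\geq 3$ forces $n\geq 8$. The hypothesis $d_H(w)\geq {t\choose r-1}+1$ and the fact that only $e_{w-1},e_w$ are guaranteed cycle edges at $w$ together imply the existence of an extra edge $g\in E(H)$ containing $w$: either (i) an edge $e_i$ of $C$ with $i\notin\{w-1,w\}$ containing $w$ as a non-graph vertex, or (ii) an edge of $H'$ (necessarily avoiding $u_1$, since $w\notin S$). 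In each case I combine $g$ with two distinct edges $\{u_1\}\cup T,\{u_1\}\cup T'$ (which exist because $t\geq r$ lets us choose disjoint $(r-2)$-tails inside $S$), the edge $e_1$ when needed for the $v_1$–$v_2$ transition, and most edges of $C$ rerouted through $w$, to assemble a Hamiltonian Berge cycle. A prototype construction (when $g=e_{w+1}$, say $w=v_4$ and $v_4\in e_5$) is
\[u_1,\,f,\,v_5,\,e_4,\,v_4,\,e_5,\,v_6,\,e_6,\,\ldots,\,v_{n-1},\,e_{n-1},\,v_1,\,e_1,\,v_2,\,e_2,\,v_3,\,g',\,u_1,\]
which visits all $n$ vertices using $n$ distinct edges; the other cases are handled by analogous reroutings.

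The main obstacle is this structural step: a pure count gives $d_H(u_1)\leq{t\choose r-1}+1$ with equality possible, so no numerical contradiction is available in the extremal subcase. The most delicate bookkeeping is the boundary cases $g\in\{e_1,e_2,e_{n-1}\}$, where the extra edge $g$ interacts with the unique $u_1$-bearing cycle edge $e_1$ and one must be careful not to reuse $e_1$ both for inserting $u_1$ into the cycle and for the rerouting through $w$.
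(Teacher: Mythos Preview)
Your counting argument is correct and identical to the paper's: the cases $|B_1|=0$ and $|B_1|=1,\ m\le t-1$ die by arithmetic, and the surviving configuration is $|B_1|=1$, $s=n-1=2t+1$ with $n$ even, $S=N_{H'}(u_1)=\{v_3,v_5,\dots,v_{n-1}\}$, and every $r$-subset of $\{u_1\}\cup S$ through $u_1$ present in $E(H)$. The paper reaches exactly this point.

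The gap is in your treatment of the extremal subcase. You produce an ``extra'' edge $g$ through $w$ merely from $d_H(w)\ge 3$, and then promise that an analogous rerouting builds a Hamiltonian Berge cycle. But in your case~(ii), $g\in E(H')$ may satisfy $g\subseteq\{w\}\cup S$ (for instance $g=\{v_{2j-1},v_{2j},v_{2j+1}\}$ when $r=3$); such a $g$ only duplicates the role of $e_{w-1}$ or $e_w$ and gives no new routing. No Hamiltonian cycle is visible in that situation, and your prototype does not cover it.

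The fix, which is what the paper does, is to use the full strength of the degree bound on $w$ rather than just $d_H(w)\ge 3$: the number of edges through $w$ with all remaining vertices in $S$ is at most $\binom{|S|}{r-1}=\binom{t}{r-1}$, so some edge $g\ni w$ has a \emph{second} vertex outside $S$. One should also pick $w\notin e_1$ (possible since $e_1$ meets $\{v_4,v_6,\dots,v_{n-2}\}$ in at most $r-3<t-1$ vertices); this guarantees $g\neq e_1$ and that the second vertex is not $u_1$. With $g$ containing $w$ and some $v'\in\{v_1,v_2\}\cup\{v_4,v_6,\dots,v_{n-2}\}\setminus\{w\}$, the Hamiltonian cycle can then be written down explicitly (the paper splits on whether $g\in E(H')$ or $g\in E(C)$ and on which non-$S$ vertex $v'$ is). Your ``analogous reroutings'' become routine once this structural property of $g$ is in hand, but without it the argument is incomplete.
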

\begin{proof}
Suppose that $u_1$ is contained in at most one edge of $C$. 
By Claim~\ref{noconsecutive} no two vertices of $N_{H'}(u_1)$ are consecutive. Since $s \leq n-1 \leq 2t+1$, this implies that $|N_{H'}(u_1)\cap V(C)|\leq t$. But since $\ell=1$, $N_{H'}(u_1)\subset V(C)$. So, since $|N_H(u_1)|\geq t+1$,
$u_1$ must be contained in an edge of $C$, say $u_1 \in e_{2t}$. Then by Claims~\ref{notneighbor}
and~\ref{noconsecutive},
 the ${t\choose r-1}$ edges of $H'$ containing $u_1$ must be disjoint from $\{v_{2t},v_{2t+1}\}$ and nonconsecutive along $C$. This is possible only if $s=2t+1$ and $|N_{H'}(u_1)\cap V(C)|= t$.

 We may assume that $X:= N_{H'}(u_1) = \{v_1, v_3, \dots, v_{2t-1}\}$. Then $u_1$ must be contained in the ${t \choose r-1}$ edges of $H'$ consisting of $u_1$ and $r-1$ vertices of $X$.

We now will find an edge $g \neq e_{2t}$ such that $|g-X|\geq 2$ and $|g \cap \{v_2, v_4, \dots, v_{2t-2}\}| \geq 1$. To do so, choose $v_{2j} \notin e_{2t}$. Since $d_H(v_{2j})>{t\choose r-1}$, there is an edge $g$ containing $v_{2j}$ and at least one additional vertex not in $X$. Notice that this vertex cannot be $u_1$, so it must be either $v_{2t+1}$ or be $v_{2j'}$ for some $1 \leq j' \leq t$, $j' \neq j$. 

We use $g$ to find a hamilitonian cycle. First suppose that $g \in E(H')$. Let $f_{2j-1}$ be an edge in $E(H')$ containing both $u_1$ and $v_{2j-1}$, which must exist because $v_{2j-1} \in X$. If $v_{2t+1} \in g-X$, then we obtain the hamiltonian cycle \[C_1 = v_{2j}, g, v_{2t+1}, e_{2t+1}, v_1, e_1,  \dots, v_{2j-1}, f_{2j-1}, u_1, e_{2t}, v_{2t}, e_{2t-1}, \dots, v_{2j}. \] Otherwise, we have $v_{2j'} \in g-X$ for some $1 \leq j' \leq t$, $j'\neq j$. Let $f_{2j'-1} \neq f_{2j-1}$ be an edge of $H'$ containing both $u_1$ and $v_{2j'-1}$. Then the cycle \[v_{2j},g,v_{2j'},e_{2j'}, v_{2j'+1},e_{2j'+1}, \dots ,v_{2j-1},f_{2j-1},u_1,f_{2j'-1},v_{2j'-1},e_{2j'-2}, \dots ,v_{2j} \] is hamiltonian.

Now we may assume that $g = e_i$ for some $i \neq 2t$. By symmetry, we may assume $i$ is odd. Let $f_i \neq f_{2j-1}$ be an edge of $H'$ containing both $u_1$ and $v_i$. If $2j \neq i+1$, then we have the hamiltonian cycle \[C_2 = v_{2j},g,v_{i+1},e_{i+1}, v_{i+2},e_{i+2}, \dots ,v_{2j-1},f_{2j-1},u_1,f_{i},v_{i},e_{i-1}, \dots ,v_{2j}.\] If $2j=i+1$ and $v_{2t+1} \in g-X$, then $g = e_{2j-1}$ and the we obtain the cycle $C_1$. Otherwise, $2j= i+1$ and there is some $v_{2j'} \in g-X$ with $j \neq j'$. Swapping the role of $j'$ with $j$ in the cycle $C_2$ gives a hamiltonian cycle.

\end{proof}

\begin{lemma}\label{ell2}
Let $n$, $k$, and $r$ be positive integers such that $n \geq k$ and $r = t$. If $H$ is an $r$-uniform hypergraph with at least $k$ edges such that
$\delta(H) \geq r+1$ and $c(H) < k$, then $\ell = |V(P)| \geq 2$.
\end{lemma}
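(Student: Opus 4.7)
The plan is to assume $\ell=1$ for contradiction and reach an impossibility. Set $a := |\{i:u_1\in e_i\}|$; Claim~\ref{b12} gives $a\ge 2$. Since $\ell=1$, any edge of $H'$ through $u_1$ must lie entirely in $V(C)$, for otherwise we could extend $P$, so $N_{H'}(u_1)\subseteq V(C)$. Corollary~\ref{cycleneighbors} and $s\le n-1\le 2t+1$ bound
\[|N_{H'}(u_1)\cap V(C)|\;\le\;\left\lceil\frac{s-1-a}{2}\right\rceil\;\le\;t-\lfloor a/2\rfloor.\]
Each edge of $H'$ through $u_1$ contributes $r-1=t-1$ other vertices to this set, hence
\[d_{H'}(u_1)\;\le\;\binom{t-\lfloor a/2\rfloor}{t-1},\]
which equals $1$ for $a\in\{2,3\}$ and $0$ for $a\ge 4$. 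Combined with the hypothesis $\delta(H)\ge t+1$ and $d_H(u_1)=a+d_{H'}(u_1)$, this forces either $(t,a)=(3,3)$ or $a\ge t+1$.

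For $a\ge t+1$ the inequality $a>s/2$ (since $s\le 2t+1$) yields cyclically consecutive indices $i,i+1$ with $u_1\in e_i\cap e_{i+1}$. Interchanging $u_1$ with $v_{i+1}$ (using the same edge set of $C$) produces a cycle $C'$ of the same length and a new $1$-vertex path $P'=v_{i+1}$. Criterion (iii) for best pairs gives $|\{j:v_{i+1}\in e_j\}|\le a$, so $(C',P')$ is itself a best pair and all the bounds above apply to it. Iterating this swap along any maximal run of consecutive $u_1$-edges, I propagate the degree bound together with the constraints of Claims~\ref{noconsecutive} and~\ref{notneighbor} to the successive ``moved'' vertices; one of these iterations must either expose an $H'$-edge from $u_1$ to a vertex not forbidden by Claim~\ref{notneighbor}, or an edge of $C$ through $u_1$ at sufficient cyclic distance, either of which splices in to enlarge $C$ and contradict the maximality of $|V(C)|$.

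The residual case $(t,a)=(3,3)$ forces $r=3$, $n=8$, $s=7$, and $d_{H'}(u_1)=1$. The unique $f\in E(H')$ through $u_1$ has two other vertices $v_p,v_q\in V(C)$ that are non-consecutive on $C$ (Claim~\ref{noconsecutive}) and avoid $v_i,v_{i+1}$ for every $i$ with $u_1\in e_i$ (Claim~\ref{notneighbor}). A direct case analysis on the three relative positions of the $u_1$-edges along the $7$-cycle pins $\{v_p,v_q\}$ down to a very short list; in every remaining configuration, splicing $f$ into $C$ (removing an appropriate arc) produces a hamiltonian cycle of length $8$, contradicting $c(H)<k\le 8$.

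The main obstacle is the case $a\ge t+1$: because a single swap produces only an equivalent, not strictly better, best pair, the contradiction must come from an iterated swap argument combined with exploiting an extra incidence of $u_1$ whose existence is guaranteed by the minimum-degree condition; identifying the correct configuration that strictly lengthens $C$ is the heart of the argument.
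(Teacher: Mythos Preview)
Your opening reduction is sound: Claim~\ref{b12} and Corollary~\ref{cycleneighbors} do force either $(t,a)=(3,3)$ or $a\ge t+1$. The gap is in the case $a\ge t+1$, and you acknowledge as much in your final paragraph.

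The swap step contains a logical slip. From $(C,P)$ being best you correctly deduce $|\{j:v_{i+1}\in e_j\}|\le a$, but this does \emph{not} imply that $(C',P')$ is a best pair; that would require equality in criterion~(iii). If the inequality is strict, $(C',P')$ is strictly worse than $(C,P)$, and you cannot invoke Claims~\ref{noconsecutive} or~\ref{notneighbor} for it (the proof of Claim~\ref{noconsecutive} uses criterion~(iii) essentially). So the ``iterated swap'' never gets off the ground: each swap may degrade the pair, and nothing forces a configuration that lengthens $C$. Your closing sentence concedes exactly this: you have identified the obstacle but not removed it.

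The paper handles $a\ge t+1$ (its Case~2, where $u_1$ lies only in cycle edges) by a completely different mechanism. Since $|E(H)|\ge k>s$, there exists an edge $e\in E(H')$; the paper then shows that for any $v_i,v_j\in e$, $u_1$ cannot lie in both $e_i,e_j$ (nor in both $e_{i-1},e_{j-1}$), because that would allow rerouting $C$ through $u_1$ via $e$ to gain a vertex. This forbids roughly $r$ edges of $C$ from containing $u_1$, which combined with $a\ge r+1$ forces $s$ to be nearly $2r$, and then a further edge-swapping analysis (splitting on whether $e\subseteq V(C)$ and whether $e\cap V(C)$ is consecutive) finishes. The key extra ingredient you are missing is exploiting an $H'$-edge that \emph{need not} contain $u_1$.

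For the residual case $(t,a)=(3,3)$, ``splicing $f$ into $C$'' is not enough on its own: the paper shows that after building the obvious length-$7$ cycle $C_1$ through $u_1$, one must analyse the $H'$-edges through the displaced vertex $v_7$ and rule out each possibility individually. A bare splice of $f$ with one of $e_{i_1},e_{i_2},e_{i_3}$ does not always reach all eight vertices.
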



\begin{proof}Suppose $\ell = 1$. We consider cases based on the edges containing $u_1$ and the edges outside of $C$. 

\medskip
{\bf Case 1:} $u_1$ is contained in some $e \in E(H')$. Note that no two vertices of $e \cap V(C)$ can be consecutive by Claim~\ref{noconsecutive}. Thus $r-1 \leq \lfloor s/2 \rfloor$, so $s \geq n-3$. Observe also that by Claim~\ref{notneighbor}, if $v_i \in e$, then $u_1 \notin e_i, e_{i-1}$. Thus we have $n-3 \leq s \leq n-1$, and there are at most three edges $e_i$ in $C$ with $v_i, v_{i+1} \notin e$. 

\medskip
{\bf Case 1.1:} There are at most two $e_i$ in $C$ with $v_i, v_{i+1} \notin e$. Then there are at least $r+1 - 2 \geq 2$ edges of $E(H')$ containing $u_1$, so consider $f \in E(H')$ with $u_1 \in f \neq e$. If for some $i$, $v_i \in e$ and $v_{i+1} \in f$ (or vice versa), we replace $e_i$ with $e,u_1, f$ to obtain a longer cycle. If no such $i$ exists, then for all $v_j \in f$ we have that $v_{j-1}, v_{j+1} \notin e$. Since $f \neq e$, we can fix a $j$ such that $v_j \in f-e$. Then $f$ prohibits $e_{j-1}$ and $e_j$ from containing $u_1$, which were not prohibited by $e$. Therefore no edges of $C$ contain $u_1$, so there are at least $r+1$ edges in $E(H')$ containing $u_1$. Then there must exist some such $f' \in E(H')$ and some $i$ such that $v_i \in f'$ and $v_{i+1}$ is in $e$ or $f$, which allows us to replace $e_i$ and obtain a longer cycle.

\medskip
{\bf Case 1.2:} There are three edges $e_i$ in $C$ with $v_i, v_{i+1} \notin e$. This case can only occur when $s=n-1$ and $n$ is even, so we have $s = 2t+1$. We first suppose that $r \geq 4$ and deal with the case $r=3$ separately. Thus we have at least $r+1-3 \geq 2$ edges of $E(H')$ containing $u_1$. As in Case 1.1, we consider $f \in E(H')$ with $u_1 \in f \neq e$, and we may assume that for all $v_j \in f$ we have that $v_{j-1}, v_{j+1} \notin e$. We also have some $j$ such that $v_j \in f-e$, which gives that $u_1 \notin e_{j-1}, e_j$. Thus at most one edge of $C$ contains $u_1$. 

If there is more than one vertex in $f'-e$ for any $f' \in E(H')$ containing $u_1$, then no edges of $C$ contain $u_1$ and we can repeat the arguments of Case 1.1 to obtain a longer cycle. By symmetry, the same holds for the edge $f$, so $N_{H'}(u_1) = e \cup f$. Notice that $|e \cup f| = r$, so there are at most $r$ edges of $E(H')$ containing $u_1$. Since $d(u_1) \geq r+1$, this gives that $u_1$ is contained in exactly those $r$ edges along with one edge of $C$, contradicting Claim~\ref{b12}.

We now handle the case $r=3$. Notice that in this case, $n=8$ and $s=7$. If $u_1$ is contained in at least two edges of $H'$, then we can in fact follow the above arguments. Thus we may assume that $u_1$ is contained in exactly one edge of $H'$ and three edges of $C$. Up to symmetry, we have two cases.

First, consider the case $u_1 \in e = \{u_1, v_2, v_5\}$ and $u_1 \in e_3, e_6, e_7$. The cycle $C_1 = v_1, e_1, v_2, \dots, v_6, e_6, u_1, e_7, v_1$ has the same edge set as $C$ and misses only the vertex $v_7$. If $v_7$ is not contained in an $H'$ edge, then $(C_1, v_7)$ is a better pair than $(C, u_1)$, a contradiction. Then $v_7 \in f \in E(H')$, and observe that $f$ cannot contain any vertex in $\{u_1, v_1, v_6\}$ by Claim~\ref{notneighbor} since $v_7 \in e_6, e_7$.

We now consider the possibilities for the edge $f$. If $v_3 \in f$, then we obtain the hamilitonian cycle $v_7, f, v_3, e_3, \dots, v_6, e_6, u_1, e, v_2, e_1, v_1, e_7, v_7$. A symmetric argument gives a hamiltonian cycle when $v_4 \in f$. Thus $f = \{v_7, v_2, v_5\}$, and $f$ must be the only $H'$ edge containing $v_7$. Then $v_7 \in f, e_6, e_7$, and some $e' \in E(C)$. By Claim~\ref{notneighbor}, $e' \neq e_1,e_2, e_4, e_5$. Thus $e' = e_3$, but we already have $e_3 = \{u_1, v_3, v_4\}$.

The second case for $r=3$, up to symmetry, has $u_1 \in e = \{u_1, v_2, v_4\}$ and $u_1 \in e_5, e_6, e_7$. We consider the same cycle $C_1$ as above, and again we have the edge $f \in E(H')$ containing $v_7$ such that $f$ cannot contain any vertices in $\{u_1,v_1, v_6\}$. 

If $v_3 \in f$, we obtain the hamiltonian cycle $v_7, f, v_3, e_2, \dots, v_1, e_7, u_1, e, v_4, e_4, \dots, e_6, v_7$. If $v_5 \in f$, we have the hamiltonian cycle $v_7, f, v_5, e_5, v_6, e_6, u_1, e, v_4, e_3, \dots, v_1, e_7, v_7$. Thus $f = \{v_7, v_2, v_4\}$, and $f$ must be the only $H'$ edge containing $v_7$. By Claim~\ref{notneighbor}, $e' \neq e_1,e_2, e_3, e_4$, so $e' = e_5$. But we already have $e_5 = \{u_1, e_5, e_6\}$.

\medskip
{\bf Case 2:} $u_1$ is only contained in edges of $C$.

\medskip
{\bf Case 2.1:} There is some edge $e \in E(H')$ with $e \subseteq V(C)$. 

As in the case $r >t$, $e$ prohibits many edges of $C$ from containing $u_1$. That is, $u_1$ is contained in at most one edge of $\{e_i: v_i \in e\}$ and at most one edge of $\{e_{i-1}: v_i\in e\}$. 

If the vertices of $e$ are not all consecutive along $C$, then $e$ prohibits at least $r$ edges from containing $u_1$. Since $u_1$ must be contained in at least $r+1$ edges of $C$, we have \[r+r+1 \leq s.\] Thus we know $r \leq (s-1)/2$, so we reach a contradiction unless $k=n$ and $s=n-1$. Notice that if the vertices of $e$ are in more than two consecutive strings in $C$, then $e$ prohibits at least $r+1$ edges and we reach a contradiction. Assume without loss of generality that $e = \{v_1, \dots, v_{i_1}, v_{i_2}, \dots, v_{i_3}\}$ with $i_2 \geq i_1+2$ and $i_3 \leq s-1$. We must also have that $u_1$ is contained in each edge $e_i$ of $C$ such that $v_i, v_{i+1} \notin e$, and $u_1$ is contained in exactly one of $e_{i_1}, e_{i_3}$ and exactly one of $e_{i_2-1}, e_{n-1}$.

Suppose first that $u_1$ is contained in $e_{i_1}$ and $e_{i_2-1}$. Let $f \in E(H')$ with $f \neq e$. Since $u_1$ is the only vertex outside of $C$, $f \subseteq V(C)$. If there is some $v_i \in f$ such that $u_1 \notin e_{i-1}, e_i$, then $f$ prohibits at least one additional edge from containing $u_1$, giving a contradiction. Thus $f \subseteq e \cup \{v_{i_3+1}, v_{n-1}\}$. Since $f \neq e$, $f$ must contain at least one of $v_{i_3+1}, v_{n-1}$. However, if $v_{i_3+1} \in f$, then $v_{i_1} \notin f$ because $u_1 \in e_{i_3+1}, e_{i_1}$, and similarly if $v_{n-1} \in f$, then $v_{i_2} \notin f$. Therefore we have three distinct possibilities for $f$ ($f = e-v_{i_1}+v_{i_3+1}$, $f= e-v_{i_2}+v_{n-1}$, and $f = e -v_{i_1}-v_{i_2}+v_{i_3+1}+v_{n-1}$), and there are at least $n+3 - (n-1) - 1 = 3$ edges in $E(H')$ distinct from $e$. Hence each of the three possibilities are edges in $H'$.  Notice also that for any $e_i$ such that $v_i, v_{i+1} \in e$, we can swap $e$ and $e_i$ to get another maximum cycle. Since $e_i \neq e$ and $e_i \neq f$, $f \in E(H')$, we must have that $e_i$ forbids at least one additional edge from containing $u_1$, a contradiction.

Now suppose instead that $u_1$ is contained in $e_{i_1}$ and $e_{n-1}$. Let $f \in E(H')$ with $f \neq e$. As in the paragraph above, we have $f \subseteq e \cup \{v_{i_2-1}, v_{i_3+1}\}$, unless $i_1 = 1$, which we will handle separately. If $i_1 \neq 1$, then by a similar argument to above we reach a contradiction. If $i_1 = 1$, notice that $u_1$ must be contained in $r+1$ consecutive edges of $C$: $e_{i_3+1}, e_{i_3+2}, \dots, e_{n-1}, e_1, e_2, \dots, e_{i_2-2}$. In this case, either $f \subseteq (e - \{v_1\}) \cup \{v_{i_2-1}, v_{i_3+1}\}$ or $f = e -v_1 + v_i$ for some $v_i \notin e$. Similarly, for any $e_j$ such that $v_j, v_{j+1} \in e$, we must have $e_j \subseteq (e - \{v_1\}) \cup \{v_{i_2-1}, v_{i_3+1}\}$ or $e_j = e -v_1 + v_i$, $v_i \notin e$, because otherwise we may swap $e$ for $e_i$ to see that an additional edge of $C$ is prohibited from containing $u_1$. This gives that no $f \in E(H')$, $f \neq e$ and no $e_j$, $i_2 \leq j \leq i_3-1$ contains $v_1$. 

Consider the cycle $C'$ formed by swapping $u_1$ with $v_1$ and $e$ with the center edge amongst $e_{i_2}, e_{i_2+1}, \dots, e_{i_3-1}$, call it $e_k$. That is \[C' = u_1, e_1, v_2, e_2, v_3, \dots , e_{k-1}, v_k, e, v_{k+1}, e_{k+1}, v_{k+2}, \dots, e_{n-1}, u_1.\] Then $v_1$ is contained only in edges of $C'$, so $C'$ is an optimal choice of cycle under the same conditions as $C$. If the edges of $C'$ containing $v_1$ are not all consecutive in along $C'$, then we must be done by a previous argument applied to $C'$ instead of $C$. If $r \geq 5$, then we immediately see that $v_1 \in e$ but $v_1 \notin e_{k-1}, e_{k+1}$, so we are done. If $r = 3,4$, then we may assume $k = i_2$ and say $v_1 \in e_{i_2-1}, e_{i_2-2}, e_{i_2-3}$ in order for the edges of $C'$ containing $v_1$ to be consecutive. Then any $f \in E(H')$ with $v_{i_2} \in f \neq e$ must have $v_{i_2-1}, v_{i_2+1} \notin f$, since if $v_i, v_j \in f$, then $v_1$ cannot be in both $e_i, e_j$ and cannot be in both $e_{i-1}, e_{j-1}$. However, there is no such edge $f \in E(H')$, so no such $f$ contains $v_{i_2}$. There is exactly one possibility for $f$ not containing $v_{i_2}$: $f = (e - \{v_1, v_{i_2}\}) \cup \{v_{i_2-1}, v_{i_3+1}\}$. This contradicts that we have at least 3 edges in $E(H')$ distinct from $e$.

We may now assume that all edges of $E(H')$ contained entirely in $V(C)$ are each consecutive in $C$, and that $e = \{v_1, v_2, \ldots, v_r\}$. Then $e$ prohibits at least $r-1$ edges of $C$ from containing $u_1$, so \[r-1+r+1 \leq s\] and thus $r \leq s/2$. If $s \leq n-3$, we immediately get a contradiction. If $s = n-2$, there exists a unique $v \notin V(C)$ with $v \neq u_1$. We must have $u_1 \in e_r, e_{r+1}, \dots, e_n$ because otherwise $e$ prohibits $r$ edges of $C$ from containing $u_1$ and we reach a contradiction. Further, we must have that each edge in $E(H')$ contains $v$, since any additional consecutive edge of $H'$ contained entirely in $V(C)$ would prohibit at least one additional edge from containing $u_1$. Thus $v$ is contained in at least 4 edges of $E(H')$. 

For $e_v \in E(H')$ containing $v$, we have that if $v_i, v_j \in e_v \cap V(C)$, then $u_1$ cannot be contained in both $e_i$ and $e_j$ and cannot be contained in both $e_{i-1}$ and $e_{j-1}$. Thus we must have that any such $e_v$ can contain at most one vertex outside $e \cup \{v\}$, and further that if $e_v$ contains some vertex outside of $e \cup \{v\}$, then $v_1, v_r \notin e_v$. Therefore there exist $e_v, e'_v$ containing $v$ and $v_i, v_{i+1} \in V(C)$ such that say $v_i \in e_v$ and $v_{i+1} \in e_v'$. We are able to extend the cycle $C$ by replacing $e_i$ with $e_v, v, e_v'$, contradicting the maximality of $C$. 

Therefore we must have $s = n-1$. Then $u_1$ is the only vertex outside of $C$, so there are at least 4 edges of $E(H')$, including $e$, each with their vertices consecutive along $C$. This prohibits at least $r+1$ edges of $C$ from containing $u_1$, giving a contradiction.

\medskip
{\bf Case 2.2:} Each $e \in E(H')$ contains some $v \notin V(C)$.

Let $e$ be such an edge and $v \neq u_1$ the unique vertex in $e - V(C)$. Note that as in the previous case, $u_1$ is contained in at most one edge of $\{e_i: v_i \in e \cap V(C)\}$ and at most one edge of $\{e_{i-1}: v_i\in e\cap V(C)\}$.

If the vertices of $e \cap V(C)$ are not all consecutive along $C$, then $e$ prohibits at least $r-1$ edges of $C$ from containing $u_1$. Thus \[r-1+r+1 \leq s,\] so $r \leq s/2$. If $s \leq n-3$, we immediately get a contradiction. Since $u_1, v \notin V(C)$, we must have $s = n-2$ and thus every edge of $H'$ contains $v$. Hence $v$ is contained in at least $(n+3)-(n-2) = 5$ edges of $E(H')$. For $e, f \in E(H')$, if $v_i \in e$, $v_{i+1} \in f$ for some $i$, then we can replace $e_i$ with $e,v,f$ to extend $C$. Since $e$ is not all consecutive, it prohibits at least $r+2$ vertices of $C$ from being contained in $f$. However, $C$ has at most $2r$ vertices and $f$ must contain at least $r-1$ of them, a contradiction.

Thus we may assume the vertices of $e \cap V(C)$ are all consecutive along $C$. Then we have \[r-2+r+1 \leq s,\] and $r \leq (s+1)/2$. If $s \leq n-4$, we get an immediate contradiction. If $s = n-2$, then similarly to above, $e$ prohibits $r+1$ vertices of $C$ from being contained in any $f \in E(H')$. Thus there are only $r-1$ vertices remaining in $V(C)$ that can be contained in any edge of $H'$, but there are at least four edges of $H'$ distinct from $e$, a contradiction.

Finally, we have that $s=n-3$, and there is some $v' \notin V(C)$ distinct from $u_1$ and $v$. In this case, there are at least $(n+3)-(n-3) = 6$ edges of $H'$, so we may assume without loss of generality that $v \in f \in E(H')$ for some $f \neq e$. However, $e$ prohibits $r+1$ of the at most $2r-1$ vertices of $C$ from being contained in $f$, a contradiction.

\end{proof}

\subsection{The case of $\ell=1$ and $r<t$}

\begin{lemma}\label{ell3}
Let $n$, $k$, and $r$ be positive integers such that $n \geq k$ and $r < t$. If $H$ is an $r$-uniform hypergraph with at least $k$ edges such that
$\delta(H) \geq {t \choose r-1} + 1$ and $c(H) < k$, then $\ell = |V(P)| \geq 2$.
\end{lemma}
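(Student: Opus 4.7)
The plan is to assume $\ell = 1$ and derive a contradiction, following the case analyses of Lemmas~\ref{ell1} and~\ref{ell2} adapted to the range $r < t$. Maximality of $(C,P)$ in criterion (ii) forces every $e \in E(H')$ containing $u_1$ to satisfy $e \subseteq \{u_1\} \cup V(C)$: otherwise a vertex $w \in e \setminus V(C)$ would extend $P$. Set $A = \{e_i \in E(C): u_1 \in e_i\}$, $m = |A|$, and $q = |N_{H'}(u_1) \cap V(C)|$; Claim~\ref{b12} gives $m \geq 2$. Claims~\ref{noconsecutive} and~\ref{notneighbor} together with Lemma~\ref{indep} yield
\[ q \;\leq\; \Big\lceil \tfrac{s-1-m}{2}\Big\rceil \;\leq\; t - \lfloor m/2 \rfloor, \]
using $s \leq n - 1 \leq 2t+1$. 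Since each $H'$-edge at $u_1$ is an $r$-subset of $\{u_1\} \cup N_{H'}(u_1)$, the core bound is
\begin{equation}\label{eqcore}
 d_H(u_1) \;\leq\; m + \binom{t - \lfloor m/2 \rfloor}{r-1}.
\end{equation}

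Next, I would compare \eqref{eqcore} with $\binom{t}{r-1} + 1$. Using the Pascal telescoping
\[ \binom{t}{r-1} - \binom{t - j}{r-1} \;=\; \sum_{i=1}^{j} \binom{t - i}{r-2} \]
with $j = \lfloor m/2 \rfloor \geq 1$, each nonzero summand is at least $t - i$ since $r \geq 3$, and for $j \geq t - r + 2$ the full sum hockey-sticks to $\binom{t}{r-1}$. A short computation shows this sum is at least $m$ except for the short list $(r,t) \in \{(3,4),(3,5),(4,5)\}$, so in all other cases \eqref{eqcore} gives $d_H(u_1) < \binom{t}{r-1}+1$, contradicting the minimum degree condition. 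In the exceptional pairs the counting forces $m$ to be essentially $s$ (so $q = 0$): $u_1$ has no $H'$-edge and lies in nearly every edge of $C$.

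To finish these residual rigid configurations, the hypothesis $|E(H)| \geq k > s = |E(C)|$ yields an edge $f \in E(H) \setminus E(C)$; the forced $u_1 \notin f$ gives $f \subseteq V(C)$. Since $m$ is near $s$, there exist consecutive edges $e_i, e_{i+1} \in A$, and the detour $v_i, e_i, u_1, e_{i+1}, v_{i+2}$ combined with the chord $f$ through some $v_a, v_b \in f$ can be spliced into $C$ to produce a cycle of length $s + 1$, contradicting the maximality of $|E(C)|$ in the best pair $(C,P)$. The main obstacle is this last step: for each residual configuration one must verify by direct construction that the chord-plus-detour splice actually yields a simple cycle whose length exceeds $s$, closely mirroring (and slightly extending) the hamiltonicity argument in Claim~\ref{b12} and the case analysis in the proof of Lemma~\ref{ell2}.
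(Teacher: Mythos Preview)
Your overall approach mirrors the paper's: bound $d_H(u_1)$ by $m + \binom{q}{r-1}$ with $q \leq \lceil (s-1-m)/2\rceil$, compare against $\binom{t}{r-1}+1$, and dispatch a short list of residual small-$t$ cases. Your single parameterised inequality (with $q \leq t-\lfloor m/2\rfloor$) is in fact a bit tidier than the paper's two-regime split ($b_1 \leq t-1$ versus $b_1 \geq t$) and correctly isolates $(r,t) \in \{(3,4),(3,5),(4,5)\}$ as the only exceptional pairs.

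The genuine gaps are all in the final paragraph. First, ``$u_1 \notin f$ gives $f \subseteq V(C)$'' is false when $s < n-1$: there may be a vertex $x \notin V(C)\cup\{u_1\}$, and $f$ can contain $x$. The paper has to treat this separately (its Case~2, where $s=8$, $n=10$). Second, your splice is not the right construction. Replacing $v_i,e_i,v_{i+1},e_{i+1},v_{i+2}$ by $v_i,e_i,u_1,e_{i+1},v_{i+2}$ keeps the length at $s$; an unrelated chord $f$ through $v_a,v_b$ does not by itself add a vertex. The mechanism that actually works (and is the content of the paper's Claim~\ref{cycleneighbors2}) requires $v_a,v_b \in f$ with $u_1 \in e_a \cap e_b$ (or $u_1 \in e_{a-1}\cap e_{b-1}$), yielding the $(s+1)$-cycle
\[
v_1,e_1,\ldots,v_a,\,f,\,v_b,e_{b-1},\ldots,v_{a+1},\,e_a,\,u_1,\,e_b,\,v_{b+1},\ldots,e_s,v_1.
\]
Pigeonhole on the at most $s-m$ edges of $C$ avoiding $u_1$ is what forces such $a,b$ to exist, and it gives only $m \leq s-r+2$. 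For $(r,t)=(3,5),(4,5)$ this suffices (it kills $m=11=s$), but for $(r,t)=(3,4)$ it leaves $m\in\{7,8\}$ with $s\in\{8,9\}$ alive. Those surviving configurations are not finished by one more splice; the paper needs a further structural argument (showing that each $f\in E(H')$ forbids additional edges of $C$ from containing $u_1$, swapping $f$ with suitable $e_i$ to tighten, and in the $s=n-2$ subcase analysing the extra vertex $x$). Your closing sentence correctly flags this as the obstacle, but it is more than ``verify by direct construction'' --- it is the bulk of the work.
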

\begin{proof}Suppose $\ell = 1$. Since every edge in $H'$ contains at most one vertex outside of $C$, $N_{H'}(u_1) \subseteq V(C)$.

By Claim~\ref{noconsecutive}, $|N_{H'}(u_1)| \leq \lfloor s/2 \rfloor \leq t$. Let $b_1$ be the number of edges in $E(C)$ containing $u_1$. By Claim~\ref{b12}, we must have $b_1 \geq 2$. 

Corollary~\ref{cycleneighbors} additionally gives that if $2 \leq b_1 \leq s-1$, then $|N_{H'}(u_1)| \leq \lceil \frac{s-1-b_1}{2} \rceil$, and if $b_1 = s$, then $|N_{H'}(u_1)| = 0$.

Notice that \[{t \choose r-1} - {t-1 \choose r-1} \geq {t \choose 2} - {t-1 \choose 2} = t-1
\] for $t \geq r+2$. Similarly, if $t=r+1$, then ${t \choose r-1} - {t-1 \choose r-1} = {t \choose 2} - (t-1) \geq t-1$. 
Thus if $b_1 \leq t-1$, we have \[d(u_1) \leq b_1 + {|N_{H'}(u_1)| \choose r-1} \leq t-1 + {t-1 \choose r-1} \leq {t \choose r-1},\] a contradiction.  Therefore we may assume $b_1 \geq t$. This gives that $|N_{H'}(u_1)| \leq \lceil \frac{s-t-1}{2} \rceil \leq \lceil \frac{t}{2} \rceil$. 

We have that \[{t \choose r-1} - {\lceil \frac{t}{2} \rceil \choose r-1} \geq {t \choose 2} - {\lceil \frac{t}{2} \rceil \choose 2} \geq n-1 \geq b_1
\] whenever $\lceil \frac{t}{2} \rceil \geq r+1$ and $t \geq 7$. If $\lceil \frac{t}{2} \rceil =r$, then we instead have ${t \choose r-1} - {\lceil \frac{t}{2} \rceil \choose r-1} \geq {t \choose 2} - \lceil \frac{t}{2} \rceil \geq n-1$ when $t \geq 7$. If $\lceil \frac{t}{2} \rceil \leq r-1$, then we have $d(u_1) \leq b_1 + 1 \leq n \leq {t \choose r-1}$ whenever $t \geq 6$. Hence for $t \geq 7$, we have $d(u_1) < \delta(H)$, a contradiction.

For the remaining values of $t$, we consider whether or not $|N_{H'}(u_1)| = 0$. First suppose we have $|N_{H'}(u_1)| \geq r-1$ and hence $\lceil \frac{s-b_1-1}{2} \rceil \geq r-1$. When $t=4$, we need $s\in \{8,9\}$, $b_1\in \{4,5\}$ (since $b_1 \geq t$) to have $\lceil \frac{s-b_1-1}{2} \rceil \geq r-1$. In every case we have $|N_{H'}(u_1)| = r-1$, but then $d(u_1) \leq 5+1  < 7 \leq \delta(H)$. When $t=5$, we have $s \leq 11$ and so we need $b_1 \leq 7$ to have $\lceil \frac{s-b_1-1}{2} \rceil \geq r-1 \geq 2$. Hence $d(u_1) \leq 7+ 1 < 11 \leq \delta(H)$. When $t=6$, we have $6 \leq b_1\leq s \leq 13$, so $\lceil \frac{s-b_1-1}{2} \rceil \leq 3$ and hence we are done if $r \geq 5$. If $\lceil \frac{s-b_1-1}{2} \rceil = r-1$, then $d(u_1) \leq b_1 + 1 < \delta(H)$. If $\lceil \frac{s-b_1-1}{2} \rceil = r = 3$, then we must have $b_1 \leq 6$, so $d(u_1) \leq 6 + 3 < \delta(H)$, a contradiction.

For the final case of $|N_{H'}(u_1)| = 0$, we prove a brief claim.

\begin{claim}\label{cycleneighbors2}
If $|N_{H'}(u_1)| = 0$, then $b_1 \leq s-r+2$.
\end{claim}

\begin{proof}
Suppose that $b_1 \geq s-r+3$. Notice that we must have $E(H') \neq \emptyset$ because there are at least $k > s$ edges. Let $e \in E(H')$, and notice that $|e\cap V(C)| \geq r-1 \geq 2$. Thus there must exist $v_i, v_j \in e$ such that $u_1 \in e_i, e_j$ or $u_1 \in e_{i-1}, e_{j-1}$ because $u_1$ is in all but at most $r-3$ edges of $C$. However, we can then consider the cycle \[ v_1, e_1, v_2, \ldots, e_{i-1}, v_i, e, v_j, e_{j-1}, v_{j-1}, \ldots, e_{i+1}, v_{i+1}, e_i, u_1, e_j, v_{j+1}, e_{j+1}, \ldots, e_{s-1}, v_s, e_s, v_1,
\]
which is longer than $C$, a contradiction.
\end{proof}

If we do have $|N_{H'}(u_1)| = 0$, then Claim~\ref{cycleneighbors2} gives that $b_1 \leq s-r+2 \leq n-r+1$. Then $d(u_1) \leq n-r+1 < \delta(H)$ except in the case $t=4, r=3, b_1\geq 7$, which we handle separately.

\medskip
{\bf Case 1:} $s = n-1 \in \{8,9\}$. Therefore $s - b_1 \leq 2$.
Let $e \in E(H')$, and notice that $e \subseteq V(C)$ because $|N_{H'}(u_1)| = 0$. As in the case of $\ell=1$, $r\geq t$, $e$ prohibits some edges of $C$ from containing $u_1$. That is, if $v_i, v_j \in e$, then $u_1$ cannot be contained in both $e_i$ and $e_j$ and cannot be contained in both $e_{i-1}$ and $e_{j-1}$. If $e$ is not all consecutive, then $e$ prohibits at least 3 edges of $C$ from containing $u_1$. This contradicts that $s-b_1 \leq 2$. If $e$ is all consecutive, say $e = \{v_i, v_{i+1}, v_{i+2}\}$, notice that if $u_1 \in e_i$, then we must have $u_1 \notin e_{i-1}, e_{i+1}, e_{i+2}$, reaching the same contradiction. Thus we have $u_1 \notin e_i$ and similarly $u_1 \notin e_{i+1}$. Consider the cycle formed by swapping the roles of $e$ and $e_i$. Then $e_i$ must prohibit at least one additional edge of $C$ from containing $u_1$, reaching the same contradiction again.

\medskip
{\bf Case 2:} $s=8$, $n=10$, $b_1 = 7$.
If any edge of $E(H')$ is contained fully in $V(C)$, then we follow the same arguments as Case 1 to reach a contradiction. Thus we may assume every edge of $E(H')$ contains the unique vertex $x \neq u_1$ outside $C$. Let $e_j$ be the edge of $C$ which does not contain $u_1$. For any edge $e \in E(H')$, we must have $v_i, v_{i+1} \in e$, as otherwise $e$ will prohibit at least two edges of $C$ from containing $u_1$. However, there are at least two such edges $e, e' \in E(H')$, and this gives $e = e'$, a contradiction.
\end{proof}

\section{Proof of Theorem~\ref{main3}(c)}

\begin{proof}
As in previous sections, consider a best pair $(C,P)$ with $C = v_1, e_1, v_2, \ldots, e_{s-1}, v_s, e_s, v_1$ and $P = u_1, f_1, u_2, \ldots, f_{\ell-1}, u_\ell$.  We use the same notation of $H_C, H_P, H'$, and additionally define the following. For a vertex $v$ of a hypergraph $F$, $F\{v\}$ will denote the set of the edges of $F$ containing $v$. 

 By Lemmas~\ref{ell2} and~\ref{ell3}, $\ell \geq 2$. By Lemma~\ref{le1}, $s \geq t+2$. Therefore $\ell \leq n - s \leq 2t + 2 - (t+2) = t$. 

Recall for $j \in \{1, \ell\}$, $B_j=H_C\{u_j\}$, and set $b_j=|B_j|$. By symmetry, we may assume $b_\ell\geq b_1$.
By Claim~\ref{distance} and Lemma~\ref{verc} either
\begin{equation}\label{n2'}
b_1\leq (s+2)/2-\ell,\end{equation}
or
\begin{equation}\label{n13}
\mbox{$B_1=B_\ell$  and }\; b_1\leq s/\ell.
\end{equation}

Recall that by the maximality of $V(P)$ all edges in $H'$ containing $u_1$ or $ u_\ell$ are contained in $V(C)\cup V(P)$.


For $j\in \{1,\ell\}$, let $A_j=N_{H'}(u_j)\cap V(C)$ and $a_j=|A_j|$. By Claim~\ref{noconsecutive}, $A_j$ contains no consecutive vertices of $C$ for $j \in \{1, \ell\}$.

{\bf Case 1:} $A_1=\emptyset$. Then all edges  in $H'$ containing $u_1$  are contained in $ V(P)$.

{\bf Case 1.1:} $r=t$. Since $\ell\leq t$, the only possibility of an edge $g\in E(H')$ containing $u_1$ is that $\ell=t$ and $g=V(P)$. But then we can switch $g$ with $f_1$, contradicting Part (iv) of choosing $(C,P)$. Thus $N_{H'}(u_1)=\emptyset$. Then
\begin{equation}\label{n14}
b_1\geq \delta(H) - |E(P)| \geq (t+1)-(\ell-1)=t-\ell+2.
\end{equation}
So, if~\eqref{n2'} holds, then since $s\leq n-\ell\leq 2t+2-\ell$, $b_1\leq \frac{2t+2}{2}-\ell$, contradicting~\eqref{n14}.


If~\eqref{n13} holds, then  comparing with~\eqref{n14} we get  $t-\ell+2\leq (2t+2-\ell)/\ell$, which is equivalent to
$\ell(t-\ell+3)\geq 2t+2$.
This can hold only when $\ell=2$ and $s=2t$. In this case $b_1=t$ and $B_\ell=B_1$. Since an edge in $B_\ell$ cannot be next to an edge in $B_1$ on $C$, we may assume that  $B_1=B_\ell=\{e_1,e_3,\ldots,e_{2t-1}\}$.
 Since $n=s+\ell=s+2$, $f_1$ contains a vertex of $C$, say $v_1$. But then we get a longer cycle by replacing in $C$  path
 $v_1,e_1,v_2$ with path $v_1,f_1,u_1,e_1,v_2$, a contradiction.

{\bf Case 1.2:} $3\leq r\leq t-1$. The number of edges in $H'$ containing $u_1$ and contained in $V(P)$ is at most $\binom{\ell-1}{r-1}$.
So, 
\begin{equation}\label{n15}
b_1 \geq 1+\binom{t}{r-1}-\binom{\ell-1}{r-1}-(\ell-1)\geq 1+\binom{t}{2}-\binom{\ell-1}{2}-\ell+1=\frac{(t+\ell-2)(t-\ell+1)}{2}-\ell+2.
\end{equation}
If~\eqref{n2'} holds, then since $s\leq 2t+2-\ell$, we get
$$\frac{(t+\ell-2)(t-\ell+1)}{2}-\ell+2\leq \frac{2t+4-\ell}{2}-\ell,$$
which is not true for $2\leq \ell\leq t$ when $t\geq 4$.

If~\eqref{n13} holds, then  we get
\begin{equation}\label{bfr}
\frac{(t+\ell-2)(t-\ell+1)}{2}-\ell+2\leq  \frac{2t+2-\ell}{\ell}.
\end{equation}
This does not hold in the range $2\leq \ell\leq t-1$ when $t\geq 4$. Suppose now $\ell=t\geq 4$.
If  all $\binom{\ell-1}{r-1}$ $r$-subsets of 
$V(P)$ containing $u_1$ are in $H'$, then we can replace $f_1$ with $\{u_1,\ldots,u_r\}$ contradicting Part (iv) of choosing $(C,P)$.
Thus, in this case instead of~\eqref{n15}, we have $b_1\leq \frac{(t+\ell-2)(t-\ell+1)}{2}-\ell+3$ and so instead of~\eqref{bfr}, we have
$$\frac{(t+\ell-2)(t-\ell+1)}{2}-\ell+3\leq  \frac{2t+2-\ell}{\ell},$$
which is not true for $\ell=t\geq 3$. This finishes Case 1.

\medskip
{\bf Case 2:} $A_1\neq\emptyset$ and $B_\ell\neq \emptyset$. 
If $v_i\in A_1$, say $v_i\in g\in E(H'\{u_1\})$, $e_j\in B_\ell$, $j\geq i$ and $j-i\leq \ell-2$, then by replacing in $C$ path $v_i,e_i,v_{i+1},\ldots,e_j,v_{j+1}$
  with the path
$v_i,g,u_1,f_1,\ldots,f_{\ell-1},u_\ell,e_{j},v_{j+1}$ creates a cycle longer than $C$, a contradiction. Thus $A_1\cap B_\ell=\emptyset$,
each interval of $C-A_1$ contains a vertex not covered by $B_\ell$, and each such interval containing an edge in $B_\ell$ has at least $2(\ell-1)$ such vertices. Since the edges in $B_\ell$ cover at least $b_\ell+1$ vertices, we get
\begin{equation}\label{n7'}
(2a_1-1)+2(\ell-1)+(b_\ell+1)\leq s\leq 2t+2-\ell.
\end{equation}
Since $\ell\geq 2$ and by the case $b_\ell\geq 1$,~\eqref{n7'}  yields $2a_1+2\ell-1\leq 2t$, so by integrality
\begin{equation}\label{n7}
t\geq a_1+\ell.
\end{equation}
If $r=t$,~\eqref{n7} yields that $H'\{u_1\}$ contains only one edge, namely, $g=A_1\cup V(P)$.
But then we can switch $g$ with $f_1$ and still have the best pair $(C,P')$ where $P'$ is obtained from $P$ by deleting $f_1$ and adding $g$ instead.
So, there is a vertex $v_i\in (f_1\cap V(C))-A_1$. This is one more vertex that is not next to any $v_j\in A_1$ and is at distance in $C$ at least $\ell$ from $B_\ell$. Thus in this case instead of~\eqref{n7'} we get $(2a_1+1)+2(\ell-1)+(b_\ell+1)\leq s$ and hence $t\geq a_1+\ell+1$, a contradiction.

Suppose now $3\leq r\leq t-1$. Then, since $b_\ell\geq b_1$,
$$1+\binom{t}{r-1}\leq d(u_1)= d_{H'}(u_1)+b_1+d_{H_P}(u_1)\leq \binom{a_1+(\ell-1)}{r-1}+b_\ell+(\ell-1).$$
So,
$$\frac{t(t-1)-(a_1+\ell-1)(a_1+\ell-2)}{2}=\binom{t}{2}-\binom{a_1+\ell-1}{2}\leq \binom{t}{r-1}-\binom{a_1+\ell-1}{r-1}\leq b_\ell+\ell-2.$$
Plugging in the upper bound on $b_\ell+\ell-2$ from~\eqref{n7'} and rewriting $\frac{t(t-1)-(a_1+\ell-1)(a_1+\ell-2)}{2}$ as
$\frac{(t+a_1+\ell-2)(t-a_1-\ell+1)}{2}$, we obtain
\begin{equation}\label{n8}
\frac{(t+a_1+\ell-2)(t-a_1-\ell+1)}{2}\leq 2(t-a_1-\ell+1).
\end{equation}
Since by~\eqref{n7}, $t-a_1-\ell+1>0$,~\eqref{n8} simplifies to $t+a_1+\ell-2\leq 4$. Since $t\geq r+1\geq 4$, $a_1\geq 1$ and $\ell\geq 2$, this is impossible.

\medskip
{\bf Case 3:} $A_1\neq\emptyset$,  $B_\ell=B_1= \emptyset$, and 
$A_\ell\neq A_1$.
By  Case~1, $a_1>0$ and 
$a_\ell>0$. 

If $i<i'\leq i+\ell$, and there are distinct $g_1,g_\ell\in E(H')$ such that $\{v_i,u_1\}\subset g_1$ and  $\{v_{i'},u_\ell\}\subset g_\ell$,
 then replacing path $v_i,e_i,\ldots,v_{i'}$ in $C$ with the path
$v_i,g_1,u_1,f_1,\ldots,f_{\ell-1},u_\ell,g_\ell,v_{i'}$ creates a cycle longer than $C$, a contradiction. 

By Claim~\ref{noconsecutive}, $A_1\cup A_\ell$ does not contain consecutive vertices of $C$.
We may assume that $a_1\leq a_\ell$. Then since $A_\ell\neq A_1$, $A_\ell-A_1\neq \emptyset$.
So, by  Lemma~\ref{verc2}, 
\begin{equation}\label{n2''}
a_1\leq (s+2)/2-\ell-1\leq (2t +2-\ell)/2-\ell\leq t-\ell.\end{equation}
Also
\begin{equation}\label{n21}
d_{H'}(u_1)\geq d_H(u_1)-b_1-(\ell-1)\geq  1+\binom{t}{r-1}-0-\ell+1=2+\binom{t}{r-1}-\ell.
\end{equation}

{\bf Case 3.1:} $r=t$.  Then each edge  $g\in H'\{u_1\}$ has at least $t-\ell$ vertices in $V(C)$ 
with equality only when $V(P)\subset g$.
By~\eqref{n21},  $d_{H'}(u_1)\geq 2$.  Hence  $a_1\geq (t+1)-\ell$. 
This contradicts~\eqref{n2''}.


{\bf Case 3.2:} $3\leq r\leq t-1$. Then 
$d_{H'}(u_1)\leq \binom{a_1+\ell-1}{r-1}$. So,  by~\eqref{n2''},
$d_{H'}(u_1)\leq \binom{t-1}{r-1}$, and together with~\eqref{n21}, we get 
$$2+\binom{t}{r-1}-\ell\leq \binom{t-1}{r-1},$$
which is not true when $2\leq r-1\leq t-2$ and $\ell\leq t$. 

\medskip
{\bf Case 4:} $A_1\neq\emptyset$,  $B_\ell=B_1= \emptyset$, and  
$A_\ell=A_1$. Let $A_1=\{x_1,\ldots,x_{a_1}\}$ with vertices in clockwise order on $C$.

{\bf Case 4.1:}  Between any $x_j$ and $x_{j+1}$ there are at least $\ell$ vertices. Then $(\ell+1)a_1\leq s$.
If $a_1\geq 2$, then~\eqref{n2''} holds, and we repeat the argument of Case~3. So suppose
 $a_1=1$ and $A_\ell=A_1=\{v_1\}$.
 Since 
 \begin{equation}\label{n17}
d_{H'}(u_1)\geq 1+\binom{t}{r-1}-(\ell-1)\geq 1+\binom{t-1}{r-1}
\end{equation}
 and each edge in $H'\{u_1\}$ is contained in
$V(P)+v_1$, $\ell=t$ and some edge $g\in H'\{u_1\}$ contains $u_\ell$. Also, by degree condition, some edge $f\in H\{u_1\}$ is not contained in $V(P)+v_1$. By the case, this is some $f_j$. By the symmetry between $u_1$ and $u_\ell$, we may assume
$j\leq \ell/2$. Since $H$ contains path $P_j=u_{j+1},f_{j+1},\ldots,u_\ell,g,u_1,f_1,\ldots,u_j$, the edge $f_j$ is contained in $V(C)\cup C(P)$, and hence
$f_j$ contains some $v_i$ for $i\neq 1$. By symmetry, we may assume $i\leq s/2+1=t/2+2$.

If all $r$-element subsets of $V(P)$ containing $u_1$ are edges in $H$, then by Rule (iv) of the choice of $(C,P)$, one of them is $f_1$.
Thus, by~\eqref{n17}, there is $g_1\in H'\{u_1\}-g$ not contained in $V(P)$ and hence containing $v_1$. When we replace path
$v_1,e_1,v_2,\ldots,v_i$ in $C$ with path $$v_1,g_1,u_1,g,u_\ell,f_{\ell-1},u_{\ell-1},\ldots,u_{j+1},f_j,v_i,$$ we first delete the $i-2$ internal vertices of the former path and  then add $t-j+1$ vertices of the latter. So, the length of the cycle will be at least
$$s-(i-2)+(t-j+1)\geq s-t/2+t/2+1>s,$$
a contradiction.

{\bf Case 4.2:}  There are  indices $j$ such that between $x_{j}$ and $x_{j+1}$ there are at most $\ell-1$ vertices. Since $A_\ell=A_1$, for each such $j$ there is a $g_j\in E(H'\{u_1\})\cap E(H'\{u_\ell\})$ containing $x_{j}$ and $x_{j+1}$
and no other edge in $E(H'\{u_1\})\cup E(H'\{u_\ell\})$ contains any of $x_{j}$ and $x_{j+1}$. Suppose that we have exactly $m$ such edges $g_j$ and that $A$ is the set of the 
 vertices in $A_1$ that are not in such pairs $(x_{j},x_{j+1})$. Let $a=|A|$.
 By these definitions and remembering that $A_1$ is independent,
$a_1\geq a+2m$, 
 \begin{equation}\label{n19}
d_{H'}(u_1)\leq m+\binom{|V(P)\cup A|-1}{r-1}\leq m+\binom{\ell+a-1}{r-1}\leq \binom{\ell+a+m-1}{r-1},
\end{equation}
and if $a+m\geq 2$, then
\begin{equation}\label{n20}
2t\geq s\geq (\ell+1)(a+m)+m.
\end{equation} 
Since our case is that $m\geq 1$, the negation of $a+m\geq 2$ means $a=0$ and $m=1$. In this case only one edge, say $g_0$ in $H'\{u_1\}$ intersects $V(C)$.  By~\eqref{n21}, $H'\{u_1\}$ contains another edge, say $g_1$ that must be contained in $V(P)$. This yields
$\ell=t$ and $g_1=V(P)$. But then we can switch $g_1$ with $f_1$ contradicting Rule (iv) of the choice of $(C,P)$. Thus,
$a+m\geq 2$ and so~\eqref{n20} holds. Since $m\geq 1$ and $a+m\geq 2$,~\eqref{n20} yields $2t\geq 2(a+m)+(\ell-1)2 +1$, 
and so $t>a+m+\ell-1$. This means
\begin{equation}\label{n22}
t\geq a+m+\ell.
\end{equation} 
Plugging~\eqref{n22} into~\eqref{n19} and comparing with~\eqref{n21}, we get
$$\binom{t}{r-1}-\binom{t-1}{r-1}\leq \ell-2,$$
which does not hold for $\ell\leq t$ when $r\leq t$.
\end{proof}

\section{Proof of Theorem~\ref{main41}}\label{large}

\begin{proof}
Let $k$ be the smallest integer at least $n/2$ for which the theorem does not hold. Let $H$ be an $n$-vertex $r$-graph with $\delta(H)\geq \lceil k/2 \rceil$ such that $H$ has no cycle of length $k$ or longer.

Choose a best pair $(C,P)$ with notation as in the previous two sections. By Lemma~\ref{le2}, $k>(n+1)/2$. Moreover, by Lemma~\ref{ell1}, $\ell \geq 2$.

Since the theorem holds for $k'<k$, $s=k-1$. Also by the maximality of $\ell$, each edge in $H'$ containing $u_1$ or $u_\ell$ is contained in $V(C)\cup V(P)$ and cannot have two consecutive vertices of $C$.

\medskip
{\bf Case 1:} $\ell\geq (1+k)/2$.

\medskip
{\bf Case 1.1:} There are distinct $v_i$ and $v_j$ in $V(C)$ such that $v_i\in f_1$ and $v_j\in f_{\ell-1}$. By symmetry, we may assume that $i=1$ and
$j\leq (s+1)/2$. By the maximality of $s$, the path $v_1,f_1,u_2,f_2,\ldots, u_{\ell-1},f_{\ell-1},v_j$ is not longer than the path
$v_1,e_1,\ldots,e_{j-1},v_j$. This means $\ell-2\leq j-2$.
Plugging in the inequalities for $\ell$ and $j$, we get 
$$(1+k)/2\leq  (s+1)/2\leq k/2,$$
a contradiction

\medskip

{\bf Case 1.2:} Case 1.1 does not hold. Then either $f_1$ or $f_{\ell-1}$ contains at most one vertex in $C$, so $s+r\leq n+1$. By Lemma~\ref{le2}, this is only possible when $r=n/2$ and $s=1+n/2$.
Since $r+s>n$,
each of $f_1$ and $f_{\ell-1}$ has a vertex in $C$. Since Case 1.1 does not hold, this is the same vertex, say $v_1$. Moreover, each of 
$f_1$ and $f_{\ell-1}$ must contain $V(G)-V(C)$. But then $f_1=f_{\ell-1}$ and so $\ell=2$. By the case, $2\geq (k+1)/2$, i.e., $k\leq 3$, so
$3>(n+1)/2$, thus $n\leq 4$, and $r \leq n/2 \leq 2$, a contradiction.

{\bf Case 2:} $2\leq \ell \leq k/2$. Since $s\geq (n+1)/2$, $\ell\leq n-s<n/2\leq r$. So, $r-\ell\geq 1$.

{\bf Case 2.1:}  There is an edge $g\in E(H')$ containing $u_1$. By the maximality of $|V(P)|$, $g\subset V(C)\cup V(P)$.
So $|g\cap V(C)|\geq r-\ell$. Since no vertices of $g$ are consecutive on $C$, the number of vertices in the largest interval of $C$ between
vertices of $g$ is at most 
\begin{equation}\label{rn1}
s-2(r-\ell)+1\leq (n-\ell)-2r+2\ell+1\leq \ell+1.
\end{equation}
This means, the distance on $C$ from any of its vertices to $g$ is at most $1+\ell/2$. 

{\bf Case 2.1.1:} Some $e_i$ contains $u_\ell$, say $i=1$. If some $v_{j}\in g$ and $j\leq \ell+1$, then we can replace the path $v_1,e_1,v_2,\ldots,v_j$ in $C$ with the path 
$v_1,e_1,u_\ell,f_{\ell-1},u_{\ell-1}, \ldots,u_1,g,v_j$, and get a longer cycle. Thus the interval of $C$ between two vertices of $g$ that contains $e_1$ has at least $2+2(\ell-1)=2\ell$ vertices, contradicting~\eqref{rn1}.

{\bf Case 2.1.2:} None of $e_i$ contains $u_\ell$. Since $d(u_\ell)\geq k/2\geq \ell$ and $P$ has only $\ell-1$ edges,
there is an edge $g'\in E(H')$ containing $u_\ell$. So, by symmetry we may assume that none of $e_i$ contains $u_1$.

Suppose first $g'\neq g$.
Since the distance on $C$ between any  vertex of $g\cap V(C)$ 
 and any  vertex of $g'\cap V(C)$ is either $0$ or at least $1+\ell$,
all vertices of $g'\cap V(C)$ must belong to $g$, and the distance on $C$ between any two vertices of $g'$ is at least $1+\ell$. 
By symmetry, we get $g\cap V(C)=g'\cap V(C)$. Since $g\neq g'$, the edges must differ in $V(P)$. In particular, $|g\cap V(P)|\leq \ell-1$, and hence $|g\cap V(C)|\geq r-\ell+1$.
But then
\begin{equation}\label{rn2}
n\geq s+\ell\geq (1+\ell)(r-\ell+1)+\ell.
\end{equation}
The minimum of the polynomial $F(\ell)=-\ell^2+(r+1)\ell+r+1$ in the RHS of~\eqref{rn2} is attained when $\ell$ is extremal. 
We have $F(2)=F(r-1)=-1+3r$, which is greater than $n$ when $r\geq \max\{3,n/2\}$. 

Suppose now only $g$ is an edge in $H'$ containing $u_\ell$. We have that $H\{u_1\} = H\{u_\ell\} = E(P) \cup \{g\}=:L$. 
Moreover, for any $u_i \in V(P)$, the path $P_i^1 = u_i, f_{i-1}, \ldots, f_1, u_1, f_i, u_{i+1}, \ldots, f_{\ell -1}, u_\ell$ has the same length, vertices, and edges as $P$. We conclude that $(C, P_i^1)$ is also best pair, and so we may assume that $H\{u_i\} = L$ for all $1 \leq i \leq \ell$. Therefore $V(P) \subset g$ and $\ell = r-1$. 

Moreover, for every $1\leq j\leq \ell-1$, the path
$P(j)=u_{j+1},f_{j+1},\ldots,u_\ell,g,u_1,f_1,\ldots,u_j$ has the same vertex set as $P$, and its ends, $u_j$ and $u_{j+1}$ belong to edge $f_j$ not used in $P(j)$. The pair $(C, P(j))$ is also a best pair since $V(P) \subset g$. As above we conclude that $f_j\supset V(P)$.
In particular, $\ell=|L|=\lceil k/2\rceil$. Also, each edge in $L$ has exactly one vertex on $C$ and these vertices are distinct. Since
$\ell\geq k/2>s/2$, some vertices of edges in $L$ are consecutive on $C$. By symmetry, we may assume $v_s\in g$ and $v_1\in f_1$.
Then replacing edge $e_s$ in $C$ by path $v_s,g,u_1,f_1,v_1$, we get a cycle longer than $C$.

{\bf Case 2.2:}  No edge in $E(H')$ contains $u_1$ or $u_\ell$. 
 Recall $B_1$ (respectively, $B_\ell$) is the set of edges $e_i$ that contain
$u_1$ (respectively, $u_\ell$). Then for $j\in \{1,\ell\}$,  $|B_j|\geq \delta(H)-|E(P)| \geq \lceil k/2\rceil - \ell +1$. If $B_1$ and/or $B_\ell$ has size greater than $\lceil k/2\rceil-\ell+1$, then we can delete some edges to make both have exactly $\lceil k/2\rceil-\ell+1$ edges and be different from each other.

By Claim~\ref{distance}, for any distinct $e_i \in B_1$ and $e_j \in B_\ell$, $|i - j| \geq \ell$. So, if $B_1\neq B_2$, then we apply Lemma~\ref{verc} to $B_1, B_\ell$ and $q=\ell$ to obtain 
$s\geq 2(\lceil k/2\rceil-\ell+1)+2(\ell-1)\geq k$, a contradiction. Thus  $B_1=B_2$ and $|B_1|=
\lceil k/2\rceil-\ell+1$. It follows that our original  $B_1$ and $B_2$ are the same. 
For this, we need $\{u_1,u_\ell\}\subset f_i$ for all $1\leq i\leq \ell-1$ and for $u\in \{u_1,u_\ell\}$,
\begin{equation}\label{o4}
\mbox{\em the set of edges containing $u$ is $B_1\cup \{f_1,\ldots,f_{\ell-1}\}$.}
\end{equation}
If $f_1$ contains a vertex $u\in V(G)-V(C)-V(P)$, then $u$ can play the role of $u_1$, and hence~\eqref{o4} holds, as well. Also, for each $1\leq  j<\ell$, since $u_1\in f_j$, the path $P_j^1=u_j,f_{j-1},u_{j-1},\ldots,u_1,f_j,u_{j+1},f_{j+1},\ldots,u_\ell$ can play role of $P$. It follows that
\eqref{o4} holds for  $u=v_j$ and hence for all $u\in f_{j-1}$.

By symmetry, let $e_1\in B_1$. By the above, $e_1$ contains $\{u_1,\ldots,u_\ell\}$, all vertices in $f_1-V(C)-V(P)$, and $v_i, v_{i+1}$. Since $|e_1|=r=|f_1|$, the edge
$f_1$ has at least two vertices in $C$. These vertices must be at distance in $C$ at least $\ell-1$ from any edge in $B_2$. It follows that
$$s\geq \ell (k/2-\ell+1)+ (\ell-2)+2=\ell(k/2-\ell+2).$$
For $2\leq \ell \leq k/2$, the RHS is at least $k$, a contradiction.\end{proof}

\end{document}